\theoremstyle{plain}
\newtheorem{thm}{Theorem}[section]
\newtheorem{prop}[thm]{Proposition}
\newtheorem{lem}[thm]{Lemma}
\newtheorem{cor}[thm]{Corollary}
\theoremstyle{definition}
\newtheorem{ex}[thm]{Example}
\theoremstyle{remark}
\newtheorem{rem}[thm]{Remark}
\numberwithin{equation}{section}
\def\N{\mathbb{N}}
\def\R{\mathbb{R}}
\def\cB{\mathcal{B}}
\def\cI{\mathcal{I}}
\def\cL{\mathcal{L}}
\def\ss{\mathsf{s}}
\def\deg{\mathrm{deg}}
\def\bra{\langle}
\def\ket{\rangle}
\def\cA{\mathcal{A}}
\def\cB{\mathcal{B}}
\def\fa{\mathfrak{a}}
\def\fb{\mathfrak{b}}
\def\fc{\mathfrak{c}}
\def\filename{\texttt{\jobname.tex}}
\numberwithin{equation}{section}
\title[Enumeration of connected bipartite graphs with given Betti number]{Enumeration of connected bipartite graphs with given Betti number}
\author{Taro Hasui}
\address{Graduate School of Mathematics, Kyushu University, 744 Motooka, Nishi-ku, Fukuoka 819-0395, Japan}
\email{hasui.taro.826@m.kyushu-u.ac.jp}
\author{Tomoyuki Shirai} 
\address{Institute of Mathematics for Industry,
Kyushu University, 744 Motooka, Nishi-ku, Fukuoka 819-0395, Japan}
\email{shirai@imi.kyushu-u.ac.jp}
\author{Satoshi Yabuoku}
\address{National Institute of Technology, Kitakyushu College, 5-20-1 Shii, Kokuraminamiku, Kitakyushu, Fukuoka 802-0985, Japan}
\email{yabuoku@kct.ac.jp}
\subjclass[2020]{Primary 05A15, 05C31; Secondary 05C30.}
\keywords{$k$-cycle graph, bipartite graph, bivariate generating function, Betti number, tree polynomial}
\begin{document}
\begin{abstract}
We obtain first order linear partial differential equations which are satisfied by 
exponential generating functions of two variables for the number of connected bipartite graphs with
 given Betti number. By solving these equations inductively,
 we obtain the explicit form of generating functions and 
derive the asymptotic behavior of their coefficients. 
We also introduce a family of basic graphs to classify
 connected bipartite graphs and 
give another expression of the generating functions as the sum over basic graphs
 of rational functions of those for the number of labeled bipartite rooted spanning trees. 
\hfill 
\end{abstract}
\maketitle

\section{Introduction}

Let $G=(V,E)$ be a simple graph, i.e., no self-loops and
multiple edges, and we call it an
$(n,q)$-graph if $|V|=n$ and $|E|=q$. 
We denote the number of connected graphs with $k$ independent cycles, by $N(n,k)$, which is also equal to the number of connected $(n, n-1+k)$-graphs.
Since we are dealing
with connected graphs, we note that $k$ corresponds to the Betti number, the
rank of the first homology group, of each $(n,n-1+k)$-graph. 
Note that $k-1$ is often called \textit{excess} since such
a connected graph has $k-1$ more edges than vertices. 
Connected $(n,n-1)$-graphs are \textit{spanning trees} in
the complete graph $K_n$ over $n$ vertices 
 and it is known as Cayley's formula \cite{C89} that $N(n,0)=n^{n-2}$. 
Connected $(n,n)$-graphs are called \textit{unicycles} and the formula for $N(n,1)$
was found by R\'enyi \cite{R59}, which is given by 
\begin{equation}
 N(n,1) = \frac{1}{2} \left( \frac{h(n)}{n} -
 n^{n-2}(n-1)\right) \sim \sqrt{\frac{\pi}{8}} n^{n-1/2}
 \quad (n \to \infty), 
\label{eq:fnn} 
\end{equation}
where 
\[
 h(n) = \sum_{s=1}^{n-1} {n \choose s} s^{s} (n-s)^{n-s}. 
\]
The asymptotic behavior of $N(n,k)$ for general $k$ as $n \to \infty$ was
also discussed in \cite{W77}, 
where the proofs are based 
on recurrence equations which $N(n,k)$'s satisfy, 
the algebraic structures of generating functions and their
derivatives, and the combinatorial aspect as will be
seen in Theorem~\ref{thm:expressionofF_k} below.

We consider a bipartite simple graph $G = (V,E)$ with 
bipartition $V = V_1 \sqcup V_2$ and call it a bipartite $(r,s,q)$-graph if 
$|V_1|=r$, $|V_2|=s$ and $|E|=q$, which is also considered
as a connected spanning subgraph with $q$-edges in the bipartite graph $K_{r,s}$. 
We denote by $N_{{\rm bi}}(r,s,k)$ the number of connected bipartite
$(r,s,r+s+k-1)$-graphs, whose Betti number is $k$.
Similarly as before, connected bipartite
$(r,s,r+s-1)$-graphs are spanning trees in $K_{r,s}$ and it is well known \cite{Sc62}
that 
\begin{equation}
 N_{{\rm bi}}(r,s,0)=r^{s-1} s^{r-1},   
\label{eq:spanning_trees} 
\end{equation}
which is the bipartite version of Cayley's formula. 

When $rs=0$, we understand $N_{{\rm bi}}(r,s,0) = 1$ if $(r,s)=(1,0), 
(0,1)$; $=0$ otherwise, i.e., the one-vertex simple graph is regarded as
a spanning tree. 
Connected bipartite $(r,s,r+s)$-graphs are unicycles in
$K_{r,s}$ and discussed in the context of cuckoo
hushing by \cite{Ku06}. In the present paper, we discuss
$N_{{\rm bi}}(r,s,k)$ for $k=0,1,\dots$ and the asymptotic
behavior of sum of
$N_{{\rm bi}}(r,s,k)$ with $r+s=n$. 

We consider the exponential generating function of
$N_{{\rm bi}}(r,s,k)$ defined as
follows: for $k=0,1,\dots$, 
\begin{equation}
 F_k(x,y) 
:= \sum_{r,s=0}^{\infty} \frac{N_{{\rm bi}}(r,s,k)}{r!s!}x^r y^s. 
\label{eq:exp_generating_fun}
\end{equation}
For simplicity, we write the exponential generating function
for spanning trees in \eqref{eq:spanning_trees} by 
\begin{equation}
 T(x,y) := F_0(x,y) 
= x+y+\sum_{r,s=1}^{\infty} \frac{r^{s-1}s^{r-1}}{r!s!}x^r y^s.  
\label{eq:Tintro}
\end{equation}
We introduce the following functions of $x$ and $y$: 
\begin{equation}
 T_x = D_x T, \quad T_y = D_y T, 
\quad Z = T_x + T_y, \quad W = T_x T_y, 
\label{eq:TxTyZW}
\end{equation}
where $D_x = x \partial_x$  and $D_y = y \partial_y$ are
the Euler differential operators. 
Then we have the following. 
\begin{prop}\label{prop:Uintro}
The function $F_1(x,y)$ is expressed as $F_1 = f_1(W)$ with 
$f_1(w) = -\frac{1}{2}\big( \log(1-w) + w \big)$, i.e.,  
\[
 F_1(x,y) = -\frac{1}{2}\Big( \log(1-T_xT_y) + T_x T_y \Big).  
\]
\end{prop}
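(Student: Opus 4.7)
The plan is to use the classical structural decomposition of unicyclic graphs. Any connected graph with Betti number $1$ has exactly one cycle $C$, and deleting the edges of $C$ leaves a spanning forest in which each connected component is a tree attached to a unique vertex of $C$. Applied to a connected bipartite $(r,s,r+s)$-graph, the cycle $C$ has even length $2\ell$ for some $\ell \ge 2$, with $\ell$ vertices in $V_1$ alternating with $\ell$ in $V_2$; the tree hanging off a cycle vertex of side $V_i$ is a bipartite tree rooted in $V_i$. Consequently, the enumeration splits across $\ell \ge 2$: for each $\ell$, place an alternating sequence of $\ell$ $V_1$-rooted and $\ell$ $V_2$-rooted bipartite trees around the cycle, on pairwise disjoint vertex labels, and quotient by the bipartition-preserving cyclic symmetries.

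To translate this to EGFs, I would use that $T_x = D_xT$ is the EGF for bipartite trees with a distinguished root in $V_1$ (each unrooted spanning tree of $K_{r,s}$ yields $r$ such rooted trees), and similarly $T_y = D_yT$ enumerates bipartite trees rooted in $V_2$. By the product formula for EGFs, an ordered linear sequence $(\tau_1,\sigma_1,\ldots,\tau_\ell,\sigma_\ell)$ of alternating rooted trees on disjoint vertex sets, with $\tau_i$ rooted in $V_1$ and $\sigma_i$ rooted in $V_2$, is enumerated by $(T_xT_y)^\ell$. Each undirected bipartite unicycle with cycle length $2\ell$ arises from exactly $2\ell$ such linear sequences: one may start the sequence at any of the $\ell$ $V_1$-cycle-vertices and traverse the cycle in either of the two directions, both choices respecting the alternation of the bipartition. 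Hence the EGF for bipartite unicycles with cycle length exactly $2\ell$ is $(T_xT_y)^\ell/(2\ell)$.

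Summing over $\ell \ge 2$ and applying $\sum_{\ell\ge 1} w^\ell/\ell = -\log(1-w)$ then gives
\[
F_1(x,y) \;=\; \sum_{\ell=2}^{\infty} \frac{(T_xT_y)^\ell}{2\ell}
\;=\; -\tfrac12\bigl(\log(1-T_xT_y) + T_xT_y\bigr),
\]
which is the desired identity. The step requiring the most care is the symmetry count: the automorphism group of the abstract cycle $C_{2\ell}$ has order $4\ell$, but only its order-$2\ell$ subgroup that preserves the bipartition (namely the rotations by an even number of steps together with the parity-preserving reflections) is the appropriate group to quotient by. One must verify that the ``cut the cycle at a $V_1$-vertex and read off trees'' construction realizes exactly this quotient, so that the correct symmetry factor is $2\ell$ rather than $4\ell$.
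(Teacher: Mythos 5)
Your proof is correct and matches the paper's own combinatorial proof of Proposition~\ref{prop:Uintro}, given at the end of Section~5 (Subsection 5.3): decompose a bipartite unicycle into its unique cycle of length $2t$ with a bipartite rooted tree hung from each cycle vertex, count the resulting labeled objects via the EGF $(T_xT_y)^t$, and divide by the symmetry factor $2t$ to obtain $\frac12\sum_{t\ge2}(T_xT_y)^t/t$. For completeness, note the paper's \emph{primary} derivation (Section~3.1) proceeds differently, by solving the first-order linear PDE $\cL_0 F_1 = T_{xy}-T_x-T_y+T$ obtained from the recurrence in Proposition~\ref{prop:recurrence}; the analytic route generalizes mechanically to $F_k$ for $k\ge2$, whereas the combinatorial route you chose is cleaner for $k=1$ and is the one the paper later extends via basic graphs (Theorem~\ref{thm:expressionofF_k}). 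One small remark on your caveat about the symmetry factor: since the unicycle is a \emph{labeled} graph its vertices are distinguishable, so the $2\ell$-to-$1$ count follows directly from cutting the undirected labeled cycle at any of its $\ell$ $V_1$-vertices and choosing a direction, without needing to analyze which elements of $\mathrm{Aut}(C_{2\ell})\cong D_{2\ell}$ preserve the bipartition (though that group-theoretic description of where the factor $2\ell$ rather than $4\ell$ comes from is accurate).
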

This result was discussed in (cf. \cite[Lemma 4.4]{Ku06},\cite{DK12}). 
However, the term $w$ seems missing in $f_1(w)$ and $F_1(x,y)$ 
was given as $-\frac{1}{2} \log(1-T_xT_y)$, which does not give integer coefficients.  

We will give how to compute $F_k(x,y)$ for general $k$ later and,
in principle, we are able to compute them inductively. Here, we just
give the expression $F_2(x,y)$ (see Remark~\ref{rem:f3f4} for $F_3(x,y)$ and $F_4(x,y)$). 
\begin{thm}\label{thm:W2intro}
The function $F_2(x,y)$ is expressed as $F_2 = f_2(Z,W)$ with 
\begin{equation}
 f_2(z,w) 
= \frac{w^2}{24(1-w)^3} \big\{ (2+3w)z + 2w(6-w) \big\}. 
\label{eq:v}
\end{equation}
\end{thm}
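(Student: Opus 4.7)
The plan is to exploit the first-order linear PDE for $F_2$ that the paper derives in its body, combined with the observation---analogous to the situation for $F_1$ in Proposition~\ref{prop:Uintro}---that the symmetry $F_2(x,y) = F_2(y,x)$, together with the invertibility of the change of variables $(x,y) \mapsto (T_x, T_y)$, forces $F_2$ to be a symmetric function of $T_x, T_y$ and hence expressible as $F_2 = f_2(Z, W)$. One then substitutes this ansatz into the PDE, reduces it to an equation for $f_2(z,w)$, and solves.

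The computational setup begins with the bipartite tree identities $T_x = x e^{T_y}$ and $T_y = y e^{T_x}$, from which a short calculation yields
$$D_x T_x = \frac{T_x}{1-W}, \qquad D_x T_y = \frac{W}{1-W},$$
together with the symmetric formulas for $D_y$. Consequently
$$D_x W = \frac{W(1+T_x)}{1-W}, \qquad D_y W = \frac{W(1+T_y)}{1-W},$$
$$D_x Z = \frac{T_x+W}{1-W}, \qquad D_y Z = \frac{T_y+W}{1-W},$$
and the chain rule then expresses $D_x F_2$ and $D_y F_2$ as linear combinations of $\partial_z f_2$ and $\partial_w f_2$ with coefficients rational in $T_x, T_y$ and $W$.

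Substituting into the PDE for $F_2$---whose inhomogeneous part is built from $F_1 = f_1(W)$ and (possibly) $T_x, T_y$---the resulting equation must be consistent with the $x \leftrightarrow y$ symmetry, so that every asymmetric remainder in $T_x, T_y$ collapses into a combination of $Z$ and $W$. The pole structure $1/(1-W)$ of the coefficients, combined with $f_1'(w) = w/(2(1-w))$, suggests the ansatz $f_2(z,w) = (A(w)\, z + B(w))/(1-w)^3$ with polynomial $A, B$; plugging in reduces the PDE to a pair of ODEs for $A(w)$ and $B(w)$.

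The boundary condition $F_2(x,0) = F_2(0,y) = 0$---a connected bipartite graph with positive Betti number cannot have an empty colour class---corresponds to $W = 0$, forcing $A(0) = B(0) = 0$ and pinning down the solution as $A(w) = w^2(2+3w)/24$ and $B(w) = w^3(6-w)/12$, which is exactly \eqref{eq:v}. As an independent sanity check I would compare the Taylor expansion of $f_2(Z,W)$ against direct enumeration of $N_{{\rm bi}}(r,s,2)$ for a few small pairs $(r,s)$. The main obstacle I anticipate is the symmetrisation step: the right-hand side of the $F_2$-PDE is naturally asymmetric in $T_x, T_y$ (coming, say, from marking an oriented edge or a distinguished root), and rewriting it as a function of $Z$ and $W$ alone requires careful bookkeeping. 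Once that is done, the remaining work is a routine algebraic computation.
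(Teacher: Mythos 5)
Your proposal follows essentially the same route as the paper: assume $F_2 = f_2(Z,W)$ with $f_2$ linear in $z$, plug into the PDE $\cL_1 F_2 = (D_xD_y - D_x - D_y)F_1 + D_xF_1\,D_yF_1$, reduce to a pair of ODEs, and pin down the solution with the boundary condition $F_2(x,0)=F_2(0,y)=0$. The paper organizes the left-hand side through Lemma~\ref{lem:L0}, which isolates the key facts $\cL_0 Z = Z$ and $\cL_0 W = 2W$ (so $\cL_1 h(Z,W) = D_z h + 2D_w h + h$), whereas you propagate the chain-rule expressions for $D_xZ, D_yZ, D_xW, D_yW$ directly into the operator; the arithmetic is the same, but the lemma is worth extracting since it is reused for general $k$. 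One small correction: the right-hand side of the PDE is already manifestly symmetric under $x\leftrightarrow y$ (it is $(D_xD_y-D_x-D_y)F_1 + D_xF_1\,D_yF_1$, with $F_1 = f_1(W)$), so the "symmetrisation step" you flag as the main obstacle is not about asymmetry but about the algebraic simplification of expressions such as $(1+T_x)(1+T_y) = 1+Z+W$ that collapses the rational function of $T_x,T_y$ into one of $Z,W$; the paper carries this out explicitly via the auxiliary lemmas on $Z_x, W_x$ and the computation ending in $4\cL_1 F_2 = \frac{W^2}{(1-W)^4}\{(-W^2+4W+2)Z + (W^2-5W+14)W\}$.
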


From Proposition~\ref{prop:Uintro} and Theorem~\ref{thm:W2intro}, 
the asymptotic behavior for coefficients of the diagonals $F_1(x,x)$ and $F_2(x,x)$ is derived as follows. 
Let $\bra x^n \ket A(x)$ denote the operation
of extracting the coefficient $a_n$ of $x^n/n!$ in an 
exponential formal power series 
$A(x) = \sum_{n=0}^{\infty} a_n \frac{x^n}{n!}$, i.e. 
\begin{equation}
\bra x^n \ket A(x)
= a_n.  
\label{eq:braket} 
\end{equation}
The coefficients of $\bra x^n \ket F_k(x,x)$ counts the
number of connected bipartite graphs with Betti number $k$
over $n$ vertices, or equivalently, the total number of 
connected bipartite $(r,s,n-1+k)$-graphs with $r+s=n$. 
For convenience, we denote it by
\begin{align}
N_{{\rm bi}}(n, k):=\bra x^n \ket F_k(x,x)
=\sum_{r+s=n}\binom{n}{r}N_{{\rm bi}}(r,s,k).
\label{eq:defN_{bi}(n,k)}
\end{align}
When $k=0$, we have 
\begin{align*}
 F_0(x,x) 
= 2 \Big(x + \sum_{n=2}^{\infty} n^{n-2} \frac{x^n}{n!}
 \Big), 
\end{align*}
hence $N_{{\rm bi}}(n, 0)=2n^{n-2}=2N(n,0)$, 
which is equivalent to \eqref{eq1}. That is, as we will see in Section~\ref{sec:asymptotics}, 
the spanning trees in $K_{r,s}$ for some $(r,s)$ with $r+s=n$ 
are in two-to-one correspondence with those in $K_n$.  
When $k \ge 1$, the situation is different since there may exist cycles having odd length in $K_n$ while cycles must have even length in $K_{r,s}$.  
From Proposition~\ref{prop:Uintro} and
Theorem~\ref{thm:W2intro}, we obtain the asymptotic behavior
of $N_{{\rm bi}}(n,1)$ and $N_{{\rm bi}}(n, 2)$.
\begin{thm}\label{thm:asympofun} 
For $n=4,5,\dots$, 
\begin{align*}
N_{{\rm bi}}(n, 1)
=n^{n-1} \sum_{2 \le k \le n/2}\dfrac{n !}{(n-2k)! n^{2k}}
\sim \sqrt{\dfrac{\pi}{8}} n^{n-1/2} 
 \quad (n \to \infty). 
\end{align*}
\end{thm}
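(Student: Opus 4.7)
The plan is to apply Proposition~\ref{prop:Uintro} on the diagonal $x=y$, identify the resulting series with the classical tree function, extract coefficients via Lagrange inversion, and finally pass from the resulting finite sum to a Gaussian integral.

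Since $N_{{\rm bi}}(r,s,0)=r^{s-1}s^{r-1}$ is symmetric in $r,s$, the series $T(x,y)$ is symmetric, hence $T_x(x,x)=T_y(x,x)$. Let $u(x)$ denote this common value. Setting $x=y$ in Proposition~\ref{prop:Uintro} and expanding $-\log(1-w)=\sum_{k\ge1}w^k/k$, the stray $w$ cancels the $k=1$ term, yielding
\[
F_1(x,x)=\frac12\sum_{k\ge2}\frac{u(x)^{2k}}{k},
\]
so that $N_{{\rm bi}}(n,1)=\frac12\sum_{k\ge2}k^{-1}\langle x^n\rangle u(x)^{2k}$.

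To identify $u(x)$, observe that $T_x$ is the EGF of bipartite trees rooted at a vertex of $V_1$: peeling off the root (whose children form an unordered collection of trees rooted in $V_2$) gives the identity $T_x=x\,e^{T_y}$. Setting $x=y$ yields $u=xe^{u}$, i.e., $u$ is the classical tree function. Lagrange inversion applied to $u=xe^u$ then gives $[x^n]u(x)^{2k}=\frac{2k}{n}\cdot\frac{n^{n-2k}}{(n-2k)!}$ for $n\ge 2k$. Multiplying by $n!$ and substituting, the factor $k$ cancels and
\[
N_{{\rm bi}}(n,1)=\sum_{k=2}^{\lfloor n/2\rfloor}\frac{(n-1)!}{(n-2k)!}\,n^{n-2k}=n^{n-1}\sum_{k=2}^{\lfloor n/2\rfloor}\frac{n!}{(n-2k)!\,n^{2k}},
\]
which is the claimed exact formula (and is vacuously $0$ for $n<4$).

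For the asymptotic, write each summand as $a_k(n):=\prod_{j=0}^{2k-1}(1-j/n)$. Taking logs and using $\log(1-j/n)=-j/n+O(j^2/n^2)$ gives $\log a_k(n)=-2k^2/n+O(k/n)+O(k^3/n^2)$ uniformly for $k=O(\sqrt n)$. Rescaling $k=t\sqrt n$ turns $\sum_k a_k(n)$ into a Riemann sum of mesh $1/\sqrt n$ for $\int_0^\infty e^{-2t^2}\,dt=\frac14\sqrt{2\pi}$, so
\[
\sum_{k=2}^{\lfloor n/2\rfloor}a_k(n)\sim \sqrt{n}\cdot\frac14\sqrt{2\pi}=\sqrt{\pi n/8},
\]
and multiplication by $n^{n-1}$ gives the stated asymptotic. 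The main technical point is that the Gaussian approximation is only accurate for $k=O(\sqrt n)$; this I would handle by the uniform bound $a_k(n)\le e^{-k(2k-1)/n}$ (from $\log(1-t)\le -t$), which dominates the rescaled Riemann sum by an integrable envelope and makes the contribution of $k\gg\sqrt{n\log n}$ super-polynomially small, so that dominated convergence justifies the passage to the integral.
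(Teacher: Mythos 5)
Your proof is correct and reaches the same exact formula and asymptotic, but the route to the exact formula is genuinely different from the paper's. The paper expands $F_1(x,x)=\frac12\sum_{k\ge2}W(x,x)^k/k$, identifies $\langle x^n\rangle W(x,x)=2(n-1)n^{n-2}$ via a two-to-one bijection with spanning trees of $K_n$ (Lemma~\ref{keylem1}), and then computes the $k$-fold convolution $w_n^{*k}$ by induction in $k$ using Abel's identity $A_n(x,y;-1,-1)$ and repeated partial differentiation (Proposition~\ref{prop:wnconv}). You instead observe that $T_x(x,x)=T_y(x,x)=u(x)$ with $u=xe^u$ (the classical tree function), so $W(x,x)=u^2$, and a single application of Lagrange inversion gives $[x^n]u^{2k}=\frac{2k}{n}\frac{n^{n-2k}}{(n-2k)!}$. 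The factor $k$ then cancels cleanly against the $1/k$ in the logarithm expansion, and the exact formula drops out in two lines. This is more economical: it bypasses the Abel identity and the convolution recursion entirely. Amusingly, the paper does prove the equivalent fact $\langle x^n\rangle Y^k = k\,(n-1)\cdots(n-k+1)n^{n-k}$ later as Lemma~\ref{lem:polyegftree} (there by induction and Abel again), and it records $W(x,x)=Y^2$ in \eqref{eq:diagonalZW}, but only uses these for $F_2$; your approach effectively notices that this machinery already suffices for $F_1$. For the tail, both you and the paper use the Ramanujan $Q$-function style Laplace estimate ($\log a_k(n)=-2k^2/n+O(k/n)+O(k^3/n^2)$, exponentially small tail beyond $k\gg\sqrt{n}$, Riemann sum for $\int_0^\infty e^{-2t^2}\,dt$); your dominated-convergence framing via the envelope $a_k(n)\le e^{-k(2k-1)/n}$ is an equally standard packaging of the same estimate.
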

\begin{figure}[htbp]
{\footnotesize 
\begin{center}
\begin{tabular}{|c|ccccccccc|} \hline
$n$ & 3 & 4 & 5 & 6 & 7 & 8 & 9 & 10 & 11 \\\hline 
$N(n,1)$ & 1 & 15 & 222 & 3660 & 68295 & 1436568 & 33779340 & 880107840& 25201854045\\
$N_{{\rm bi}}(n, 1)$  & 0 & 6 & 120& 2280 & 46200 & 1026840 & 25102224& 673706880& 19745850960\\
\hline
\end{tabular}
\end{center}
}
\caption{$N(n,1)$ and $N_{{\rm bi}}(n, 1)$ for $n=3,4,\dots,11$} 
\end{figure}

From \eqref{eq:fnn}, this shows that the main term of the asymptotic behavior of the number of
bipartite unicycles over $n$ vertices is the same 
as that of the number of unicycles. 

\begin{thm}\label{thm:asympofF_2} 
As $n\to\infty$,
\begin{equation}
N_{{\rm bi}}(n, 2)
 \sim \frac{5}{48}n^{n+1}.
\label{eq:f2xx} 
\end{equation}
\end{thm}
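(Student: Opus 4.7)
The plan is to apply singularity analysis to the explicit formula from Theorem~\ref{thm:W2intro} after restricting to the diagonal $x=y$.

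The first step is to reduce $F_2(x,x)$ to a rational function of the classical Cayley tree function $\tau$, defined by $\tau = x e^{\tau}$. The bipartite tree generating function $T$ satisfies the functional equations $T_x = x e^{T_y}$ and $T_y = y e^{T_x}$ (a standard combinatorial consequence of \eqref{eq:spanning_trees}, obtained by rooting a bipartite tree at a vertex of either part). By symmetry, on the diagonal the common value $T_x(x,x) = T_y(x,x)$ satisfies $\tau = x e^{\tau}$ and is therefore precisely the Cayley tree function. Hence $Z(x,x) = 2\tau$ and $W(x,x) = \tau^2$, and substituting into \eqref{eq:v} yields, after a short simplification,
\[
F_2(x,x) \;=\; \frac{\tau^5\,(2 + 6\tau + 3\tau^2 - \tau^3)}{12\,(1-\tau^2)^3}.
\]

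Next, I would compute the singular behavior near the dominant singularity. The function $\tau$ is analytic in a slit neighborhood of $[0,1/e]$ with $\tau(1/e)=1$ and admits the classical expansion $\tau(x) = 1 - \sqrt{2(1-ex)} + O(1-ex)$. Hence $1-\tau^2 = (1-\tau)(1+\tau) \sim 2\sqrt{2}\,\sqrt{1-ex}$, so $(1-\tau^2)^3 \sim 16\sqrt{2}\,(1-ex)^{3/2}$, while the numerator tends to $1\cdot(2+6+3-1) = 10$ as $x \to 1/e^-$. This gives
\[
F_2(x,x) \;\sim\; \frac{5}{96\sqrt{2}}\,(1-ex)^{-3/2} \qquad (x \to 1/e^-).
\]

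Finally, I would extract the coefficient asymptotics by the standard transfer theorem of singularity analysis: $[x^n](1-ex)^{-3/2} \sim (2/\sqrt{\pi})\,e^n\,\sqrt{n}$. Multiplying by $n! \sim \sqrt{2\pi n}\,(n/e)^n$ and collecting the constants, one finds
\[
N_{\rm bi}(n,2) \;=\; n!\,[x^n]F_2(x,x) \;\sim\; \frac{5}{96\sqrt{2}}\cdot\frac{2}{\sqrt{\pi}}\cdot\sqrt{2\pi}\;n^{n+1} \;=\; \frac{5}{48}\,n^{n+1},
\]
as claimed. The step I expect to need the most care with is not the algebra but the analytic prerequisite for singularity analysis: namely, verifying that $F_2(x,x)$ extends to a suitable $\Delta$-domain at $1/e$ and has no other singularity on the circle $|x|=1/e$. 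This is inherited from the standard analytic properties of $\tau$ together with the fact that $1+\tau$ is nonvanishing on the closed disk $|x|\le 1/e$, so that only the factor $(1-\tau)^3$ contributes to the singularity, and the expansion above is genuinely of order $3/2$ since $P(1)=10 \neq 0$.
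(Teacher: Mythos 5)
Your proposal is correct, and the first half of it coincides exactly with the paper's argument: both reduce the diagonal to the Cayley tree function (the paper denotes it $Y(x)$, with $Z(x,x)=2Y$, $W(x,x)=Y^2$), and your formula $F_2(x,x)=\tau^5(2+6\tau+3\tau^2-\tau^3)/\big(12(1-\tau^2)^3\big)$ agrees with the paper's \eqref{eq:ExpandF2} after factoring $(1+\tau)$ out of the cubic numerator. Where you diverge is in the final extraction of coefficient asymptotics. The paper performs the full partial-fraction decomposition of $F_2(x,x)$ into powers of $(1-Y)^{-1}$ and $(1+Y)^{-1}$, then applies the Knuth--Pittel asymptotics for the tree polynomials $t_n(p)$ (their \eqref{asympoftnp}) together with a crude bound for the $(1+Y)^{-p}$ terms. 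You instead apply singularity analysis directly to the rational function of $\tau$ at the dominant singularity $x=1/e$, using the square-root expansion of $\tau$ and a transfer theorem. The two routes are equivalent in substance (the Knuth--Pittel formula is itself proved by the same singular expansion), but yours is shorter and avoids writing out the full partial fraction decomposition, at the cost of needing the $\Delta$-analyticity hypotheses that you correctly flag at the end; the paper's version is more elementary in that it only quotes a known coefficient asymptotic for $t_n(p)$ and a crude positivity bound, without invoking the transfer machinery. Both are valid and give the same constant $5/48$, and your sanity check that the numerator does not vanish at $\tau=1$ mirrors the paper's implicit observation that the coefficient $5/48$ of $(1-Y)^{-3}$ in \eqref{eq:ExpandF2} is nonzero.
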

It is known \cite{W77} that 
in the case of $K_n$, the main term of
asymptotic behavior of the number of ``bicycles'' is known to be $\frac{5}{24}n^{n+1}$, which is
twice of \eqref{eq:f2xx}. 
\begin{figure}[htbp]
{\footnotesize 
\begin{center}
\begin{tabular}{|c|cccccccc|} \hline
$n$ & 4 & 5 & 6 & 7 & 8 & 9 & 10 & 11 \\\hline 
$N(n,2)$ & 6 & 205 & 5700 & 156555 & 4483360 & 136368414 &
			     4432075200 & 154060613850\\
$N_{{\rm bi}}(n, 2)$ & 0 & 20 & 960 & 33600 & 1111040 & 37202760 & 1295884800 &
47478243120 \\ \hline
\end{tabular}
\end{center}
}
\caption{$N(n,2)$ and $N_{{\rm bi}}(n, 2)$ for $n=4,5,\dots,11$} 
\end{figure}

For general $k$, we have the following asymptotic
equality. 
\begin{thm}\label{thm:generalfk}
For $k \ge 0$, as $n \to \infty$, 
\begin{equation}\label{eq:f(n,n+k)asmthm}
N_{{\rm bi}}(n, k)
 \sim \frac{1}{2^{k-1}} 
N(n,k).
\end{equation}
\end{thm}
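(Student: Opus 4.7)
The plan is a singularity analysis of $F_k^{\rm bi}(x,x)$ at its dominant singularity, compared with Wright's classical asymptotics for $N(n,k)$.

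First, I would exploit the diagonal identity for the bipartite tree function. Combinatorially, $T_x$ and $T_y$ are the exponential generating functions for bipartite rooted spanning trees with the root in $V_1$ and $V_2$ respectively, so they satisfy $T_x = x e^{T_y}$ and $T_y = y e^{T_x}$. On $\{x=y\}$ they therefore coincide, and their common value $u = T_x(x,x) = T_y(x,x)$ satisfies $u = x e^u$; hence $u = T(x)$, the ordinary tree function. Consequently $Z|_{x=y} = 2T$ and $W|_{x=y} = T^2$, and near the branch point $x = 1/e$ (where $T \to 1$),
\[
 1 - W = (1-T)(1+T) \sim 2(1-T),
\]
so any pole $(1-W)^{-m}$ becomes, on the diagonal, a pole $(1-T)^{-m}$ with leading coefficient scaled by $2^{-m}$.

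Second, I would use the structural form of both generating functions. Iterating the PDE-based construction of $F_k^{\rm bi}$ (whose cases $k=1,2$ are Proposition~\ref{prop:Uintro} and Theorem~\ref{thm:W2intro}) yields, for $k \ge 2$,
\[
 F_k^{\rm bi}(x,y) = \frac{R_k(Z,W)}{(1-W)^{3(k-1)}}
\]
with $R_k$ a polynomial. Wright's classical analysis~\cite{W77} gives the analogous form $F_k^W(x) := \sum_n N(n,k) x^n/n! = S_k(T)/(1-T)^{3(k-1)}$. Restricting $F_k^{\rm bi}$ to the diagonal and applying the Flajolet--Odlyzko transfer theorem at the square-root branch point of $T$, one obtains
\[
 N_{\rm bi}(n,k) \sim \frac{R_k(2,1)}{2^{3(k-1)}} c_k n^{n+(3k-4)/2}, \qquad N(n,k) \sim S_k(1) c_k n^{n+(3k-4)/2},
\]
with a common positive constant $c_k$. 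The theorem then reduces to the identity
\[
 R_k(2,1) = 4^{k-1} S_k(1), \qquad k \ge 2,
\]
while the case $k=1$ follows directly from Proposition~\ref{prop:Uintro} together with $\log(1-W) = \log(1-T) + \log(1+T)$ (the second term being regular at $T=1$).

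Third, I would establish the identity $R_k(2,1) = 4^{k-1} S_k(1)$ through the basic-graph / kernel decomposition advertised in the abstract. In both enumerations, the leading singular coefficient is a sum over kernels---smoothed 2-cores with $V = 2(k-1)$ vertices and $E = 3(k-1)$ edges---weighted by automorphism and tree-function factors. A given kernel contributes to the bipartite count only when the parities of its smoothed path lengths form a 1-coboundary on the kernel, i.e., come from an $\mathbb{F}_2$-valued 2-coloring of its vertex set. The fraction of 1-cochains that are coboundaries is $2^{V-1}/2^E = 2^{-k}$, and multiplication by the factor $2$ accounting for the two labelings of the bipartition of a connected bipartite graph yields the net factor $2 \cdot 2^{-k} = 1/2^{k-1}$. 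The main obstacle lies precisely here: making the kernel-parity argument rigorous at the level of leading singular coefficients, either by careful bookkeeping in the basic-graph expansion (using the expressions of $F_k^{\rm bi}$ as sums over basic graphs promised in the abstract) or by an inductive argument comparing the PDEs for $F_k^{\rm bi}$ with Wright's recurrences for $F_k^W$ and tracking the top Laurent coefficients at $W = 1$ and $T = 1$ respectively.
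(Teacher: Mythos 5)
You have correctly identified the diagonal specialization ($T_x(x,x)=T_y(x,x)=Y(x)$, $1-W|_{x=y}=(1-Y)(1+Y)$), the structural form of both generating functions as Laurent series at $W=1$ (resp.\ $T=1$) with top pole of order $3(k-1)$, and the reduction of the theorem to the assertion that the bipartite leading singular coefficient equals $2^{-(k-1)}$ times Wright's. This is exactly the paper's strategy in Section~6, so far. However, you then leave the decisive step --- the identity $R_k(2,1) = 4^{k-1}S_k(1)$, equivalently the $2^{V}/2^{E}=2^{-(k-1)}$ factor per kernel --- as an announced ``main obstacle'' supported only by a heuristic, and that is precisely where the paper does all the work.

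Concretely, the paper contracts each bipartite basic graph $\cB$ to a multigraph $\cB^*$ (the ``kernel'' in your language) and, for $\cB^*$ with $2(k-1)$ vertices and $3(k-1)$ edges, proves two things. First, Proposition~\ref{prop:coef1/(1+Y)^M}: the $(1+Y)^{-3(k-1)}$ coefficients from $J_{\cB}(x,x)=Y^L/(g_{\cB}(1-Y^2)^{3(k-1)})$ \emph{cancel} after summing over all $\cB$ mapping to a fixed $\cB^*$, via a sign analysis $(-1)^{L(\cB)}=(-1)^{k-1+i}$ and $\sum_i\binom{2(k-1)}{i}(-1)^i=0$. Your proposal does not address the $(1+Y)^{-3(k-1)}$ poles at all; one can argue they are subdominant (the EGF $(1+Y)^{-p}$ contributes $O(n^{n-1})$ at $x=1/e$), but you would need to say this, and the paper instead uses the cruder bound \eqref{eq:orderof(1+Y)^p} and so needs the cancellation. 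Second, Proposition~\ref{prop:coef1/(1-Y)^M}: the $(1-Y)^{-3(k-1)}$ coefficients sum to $2^{-(k-1)}/g_{\cB_{\rm id}}$, by the explicit count $\sum_{\cB\in\phi^{-1}(\cB^*)}1/g_{\cB} = \binom{2(k-1)}{i}/g_{\cB_{\rm id}}$ (Lemma~\ref{lemma:eqauto}, which requires showing that relabeling the $2(k-1)$ trivalent special points and re-expanding $\beta$-paths into $\gamma$- or $\delta$-paths preserves the automorphism count) together with the factor $2^{-M}$ from $\fa_M(\cB)=1/(g_{\cB}2^M)$ and $\sum_i\binom{2(k-1)}{i}=2^{2(k-1)}$. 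The identification $g_{\cB_{\rm id}}=g_{\cA}$ with the corresponding basic graph $\cA$ on $K_n$ then finishes. Your coboundary-fraction heuristic $2^{V-1}/2^E$ times $2$ reproduces the numerology, but it is not a proof: you would still need to show that (a) each kernel appears with the \emph{same} automorphism weight $1/g_{\cA}$ in both sums, (b) the bipartitions of the kernel vertex set are correctly and uniformly weighted by $1/2^{M}$ in the Laurent expansion, and (c) the subleading poles do not interfere. Items (a) and (b) are the content of Lemma~\ref{lemma:eqauto} and \eqref{eq:coefJBM}, which your sketch does not establish.
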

The proof of \eqref{eq:f(n,n+k)asmthm} is given in
Section~\ref{sec:asymp_equality}.
The following asymptotic behavior
\[ 
 N(n,k) \sim \rho_{k-1} n^{n+(3k-4)/2} \quad (n \to \infty)
\]
is given in \cite{W77}, where the 
explicit value of $\rho_k$ can be computed by the recurrence
equation. Comparing the generating function of \cite[Section 8]{W77} with that of this paper, we can see that the subscript $k$ is off by one. However, the meaning of both is the same.
To derive \eqref{eq:f(n,n+k)asmthm}, we use the following result, which would be interesting on its own right and give more detailed information.   

\begin{thm}\label{thm:expressionofF_k}
For $k \ge 2$, $F_k(x,y)$ is decomposed into the sum of
 rational functions of $T_x$ and $T_y$ over the set $BG_k$
 of basic graphs with Betti number $k$ as 
\begin{align}\label{eq:expressionofF_k}
F_k(x,y)=\sum_{\cB \in BG_k} J_{\cB}(x,y)
\end{align}
with 
\begin{equation}
 J_{\cB}(x,y) = \frac{ T_x^{|V_1|}T_y^{|V_2|}}{g_{\cB}
  (1-T_xT_y)^{N_{\rm sp} + k-1-e}}
\label{eq:J_B}
\end{equation}
where $V_1 \sqcup V_2$ is the vertex set of $\cB$, 
$g_{\cB}$ is the number of automorphisms of $\cB$, 
$N_{\rm sp}$ and $e$ are the numbers of vertices with
degree $\ge 3$ and $\delta$-edges in $\cB$, respectively. 
\end{thm}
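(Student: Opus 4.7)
My plan is to set up a bijective decomposition: classify every connected bipartite graph $G$ with Betti number $k \ge 2$ according to a canonical \emph{basic graph} $\cB(G) \in BG_k$, then compute the exponential generating function for the fibre over each $\cB$ and sum over $BG_k$.

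First I would define the reduction $G \mapsto \cB(G)$. Starting from $G$, iteratively remove degree-$1$ vertices to obtain its $2$-core, which still has Betti number $k$. Then contract each maximal path of degree-$2$ vertices to a single edge, while recording the parity of the contracted path so that the bipartite structure descends consistently. The resulting $\cB$ is a bipartite (multi)graph whose vertices of degree $\ge 3$ are counted by $N_{\rm sp}$, and whose residual degree-$2$ vertices (the parity markers) give rise to the $\delta$-edges counted by $e$. Since $\cB$ has Betti number $k$ and essentially bounded branching, a standard Euler-characteristic estimate shows that $|V(\cB)|$ is bounded in terms of $k$, so $BG_k$ is finite.

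Next I would reverse the reduction to enumerate the fibre over a fixed $\cB$. Any $G$ with $\cB(G) = \cB$ is built by two independent operations: (i) at each vertex $v \in V_i(\cB)$, attach a rooted labeled bipartite spanning tree rooted at $v$; since $T_x = D_x T$ (resp.\ $T_y = D_y T$) is precisely the EGF for rooted bipartite spanning trees with root on the $V_1$- (resp.\ $V_2$-) side, this contributes the factor $T_x^{|V_1|}T_y^{|V_2|}$; (ii) along each non-$\delta$-edge of $\cB$, insert a (possibly empty) alternating chain of degree-$2$ vertices, each carrying its own rooted tree on the appropriate side, contributing $\sum_{\ell\ge 0}(T_xT_y)^\ell = (1-T_xT_y)^{-1}$ per such edge. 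The Euler relation $|E(\cB)| = |V(\cB)|+k-1$, combined with the observation that the $e$ $\delta$-edges are rigid, yields exactly $N_{\rm sp}+k-1-e$ free edges, producing the denominator $(1-T_xT_y)^{N_{\rm sp}+k-1-e}$. Division by $g_\cB = |\mathrm{Aut}(\cB)|$ corrects for the over-counting coming from automorphisms of the unlabeled basic graph acting on the labeled extensions.

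Summing $J_\cB$ over $\cB \in BG_k$ then yields \eqref{eq:expressionofF_k}. The hardest step will be the first one: giving a precise axiomatization of the basic graph so that $G \mapsto \cB(G)$ is well-defined, surjective onto $BG_k$, and fully compatible with the bipartite parity constraints---in particular identifying exactly which edges must be marked as $\delta$-edges so that no spurious factor of $T_x$ or $T_y$ appears beyond $T_x^{|V_1|}T_y^{|V_2|}$, and confirming that the quotient by $\mathrm{Aut}(\cB)$ is the right symmetry factor at the level of labeled constructions. Once the category of basic graphs is set up correctly, the remainder of the proof is a routine application of the EGF product principle together with the substitution calculus for $T_x$ and $T_y$.
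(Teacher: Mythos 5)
Your overall plan coincides with the paper's proof: pass to the $2$-core, contract the maximal degree-$2$ paths, express the fibre over each $\cB$ via the EGF product principle (rooted-tree substitution giving $T_x,T_y$, free edges giving $(1-T_xT_y)^{-1}$), and divide by $|\mathrm{Aut}(\cB)|$. The Euler/handshaking bookkeeping $N_{\rm sp}+k-1-e$ is also the same. However, there is a genuine gap in the part you yourself flag as ``the hardest step,'' and it is not merely a matter of cleaning up notation.

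Your description of $\cB$ as a multigraph with ``parity markers'' whose residual degree-$2$ vertices ``give rise to the $\delta$-edges'' is not correct and would not produce the theorem's formula. In the paper, $\cB$ is a \emph{simple} bipartite graph obtained by contracting each special path/cycle to its \emph{minimal bipartite} representative, which still contains normal (degree-$2$) vertices: a minimal $\alpha_i$-cycle is a $4$-cycle with three normal points, a minimal $\beta_j$-path has one, a minimal $\delta$-path has two, and only $\gamma$-paths and $\delta$-edges have none. Thus $|V_1|$ and $|V_2|$ in \eqref{eq:J_B} include these normal points, not merely special points plus ``parity markers.'' More importantly, a $\delta$-edge is \emph{not} a parity marker: it is a length-one special path between special points $v_1\in V_1$, $v_2\in V_2$ that coexists with another, longer, special path between the same pair (a $\delta$-path). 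Both are odd length, so parity does not distinguish them; what distinguishes them is that the $\delta$-edge is \emph{rigid} (cannot be subdivided without creating a duplicate of the length-one path, impossible in a simple graph) while the $\delta$-path can be stretched. If you collapse everything to a multigraph as you propose, you cannot recover this distinction, and the count $M=N_{\rm sp}+k-1-e$ of stretchable pieces no longer falls out. The fix, as in the paper, is to keep $\cB$ a simple bipartite graph with the minimal special paths/cycles intact, so that each of the $M=a_1+a_2+b_1+b_2+c+d$ stretchable pieces contributes one geometric-series factor $(1-T_xT_y)^{-1}$ and each vertex of $\cB$ (special and normal alike) contributes its rooted tree. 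Once $\cB$ is defined this way, your surjectivity/well-definedness and automorphism concerns are handled exactly as you anticipate, and the computation closes.
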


The definitions of basic graph and $\delta$-edge will
be given in the proof of Theorem~\ref{thm:expressionofF_k}.
From this theorem, we conclude at least that $F_k(x,y)$ for $k \ge 2$ is a rational function of $T_x$ and $T_y$. 
Note that $F_k(x,y)$ is symmetric with respect to $T_x$ and $T_y$ by the bipartite structure, and $F_k(x,y)$ can be expressed in $Z$ and $W$ for $k \geq 2$ as follows.
\begin{thm}\label{thm:intro}
For $k \ge 2$, the function $F_k(x,y)$ is expressed as $F_k = f_k(Z,W)$ with
\begin{equation}
 f_k(z,w) 
= \frac{w^2}{(1-w)^{3(k-1)}} \sum_{j=0}^{k-1} q_{k,j}(w)z^j,  
\label{eq:generalfk}
\end{equation}
where $q_{k,j}(w)$ is a polynomial in $w$.
\end{thm}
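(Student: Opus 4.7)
The plan is to deduce the claimed form of $f_k$ from the decomposition $F_k=\sum_{\cB\in BG_k}J_{\cB}$ in Theorem~\ref{thm:expressionofF_k}, combined with the bipartite symmetry $F_k(x,y)=F_k(y,x)$. The argument splits into (i) placing every $J_{\cB}$ over the common denominator $(1-W)^{3(k-1)}$, (ii) symmetrizing the numerator into a polynomial in $Z$ and $W$, (iii) extracting the global factor $W^2$, and (iv) bounding the $z$-degree by $k-1$.

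For (i) I first establish the handshake inequality $N_{\rm sp}\le 2(k-1)$. Since every vertex of a basic graph has degree $\ge 2$ and the $N_{\rm sp}$ special vertices have degree $\ge 3$,
\[
2|E(\cB)|=\sum_{v\in V(\cB)}\deg(v)\ge 3N_{\rm sp}+2(|V(\cB)|-N_{\rm sp})=N_{\rm sp}+2|V(\cB)|,
\]
and combining with $|E(\cB)|=|V(\cB)|+k-1$ yields $N_{\rm sp}\le 2(k-1)$. Since $e\ge 0$ this gives $2(k-1)-N_{\rm sp}+e\ge 0$, so multiplying numerator and denominator of $J_{\cB}$ in \eqref{eq:J_B} by $(1-W)^{2(k-1)-N_{\rm sp}+e}$ brings every summand over the common denominator $(1-W)^{3(k-1)}$.

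For (ii) and (iii), I use the bipartite symmetry to rewrite the numerator. Each $\cB$ is paired with its bipartition-flipped copy $\cB^{*}$, which carries the same $g_{\cB}$, $N_{\rm sp}$, and $e$; applying the identity $T_x^aT_y^b+T_x^bT_y^a=W^{\min(a,b)}p_{|a-b|}(Z,W)$ with $p_n(z,w)$ a polynomial of $z$-degree $n$ (obtainable from the standard expansion of $T_x^n+T_y^n$ in $Z,W$), every paired contribution becomes a polynomial in $Z$ and $W$. The global factor $W^2$ is extracted from the observation that bipartite graphs have girth $\ge 4$, so every $\cB\in BG_k$ with $k\ge 2$ contains a $C_4$, forcing $\min(|V_1|,|V_2|)\ge 2$ and hence $W^2\mid T_x^{|V_1|}T_y^{|V_2|}$.

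For (iv), the $z$-degree bound $||V_1|-|V_2||\le k-1$ comes from the same handshake: the degree-$\ge 2$ condition gives $|E(\cB)|\ge 2\max(|V_1|,|V_2|)=|V_1|+|V_2|+||V_1|-|V_2||$, and comparison with $|E(\cB)|=|V_1|+|V_2|+k-1$ yields the bound. The main obstacle I expect is step (i): both the inequality $N_{\rm sp}\le 2(k-1)$ and the $z$-degree bound rely on the precise definitions of basic graph and $\delta$-edge given in the proof of Theorem~\ref{thm:expressionofF_k}, and one must verify that the degree-$\ge 2$ condition on vertices of $\cB$ really does hold once the basic-graph construction is spelled out. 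Once these structural inequalities are secured, the symmetrization and $W^2$-extraction reduce to bookkeeping.
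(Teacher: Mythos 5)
Your proposal is correct and follows essentially the same route as the paper's proof: it uses the common denominator $(1-W)^{3(k-1)}$ obtained from the handshake bound on special points, the bipartition-swapping involution to symmetrize numerators into polynomials of $Z$ and $W$, and the $z$-degree bound coming from $\bigl||V_1|-|V_2|\bigr|\le k-1$, which is exactly the paper's Lemma~\ref{lemma:difdeg}. One small imprecision worth noting: a basic graph with $k\ge 2$ need not contain a $C_4$ (its girth could be $6$, $8$, etc.), but since every cycle in a simple bipartite graph has length at least $4$ and hence at least two vertices on each side, your conclusion $\min(|V_1|,|V_2|)\ge 2$ and the resulting factor $W^2$ still hold.
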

For $k \geq 3$, the generating function becomes highly complicated (see Remark~\ref{rem:f3f4}) 
and although we can write it down explicitly in principle as in Theorem~\ref{thm:W2intro}, 
it may not  be practical to do so, instead, we here emphasize that the generating function has a particular form given by \eqref{eq:generalfk}. 
The polynomial $q_{k,j}(w)$ seems to have more factor $w^{b_{k,j}}$ depending on $k$ and $j$.  

The paper is organized as follows. In Section 2, we give
recurrence equations for $N_{{\rm bi}}(r,s,q-r-s+1)$ 
and derive recurrence linear partial differential equations that the
generating functions $F_k(x,y)$ of $N_{{\rm bi}}(r,s,k)$
satisfy. In Section 3, we solve these equations by reducing
them to a system of ordinary differential equations and
obtain the explicit expressions of $F_1(x,y)$ and
$F_2(x,y)$. In Section 4, we obtain the asymptotic
behavior of the coefficients of $F_k(x,x)$ for $k=1,2$. 
In Section 5, we will give proofs of Theorem~\ref{thm:expressionofF_k} and Theorem~\ref{thm:intro} and another proof of Proposition \ref{prop:Uintro} by a combinatorial argument. 
In Section 6, we will give proof of Theorem~\ref{thm:generalfk}.

\section{Recurrence equations}
\label{sec:recurrence}

Let $N_{{\rm bi}}(r,s,q-r-s+1)$ be the number of connected bipartite
$(r,s,q)$-graphs as defined in the introduction. 
Since an $(r,s,r+s-1)$-bipartite graph is a spanning tree and
we are dealing with simple graphs, 
it is clear that 
\begin{equation}
N_{{\rm bi}}(r,s,q-r-s+1) =0 \quad \text{if $q< r+s-1$ or $q > rs$}.   
\label{eq:fequalto0}
\end{equation}
As mentioned in \eqref{eq:spanning_trees}, $N_{{\rm bi}}(r,s,0) = r^{s-1} s^{r-1}$. 
Here we understand $0^a = \delta_{0,a}$ as Kronecker's
delta. For example, $N_{{\rm bi}}(1,0,0)=N_{{\rm bi}}(0,1,0)=1$ and $N_{{\rm bi}}(0,0,0)=0$. 

\begin{lem}\label{lem:recurrence1}
For $(r,s) \not= (0,0)$ and $q=-1,0,1,\dots$, we have the following recurrence
 equations: 
\begin{equation}
 (q+1) N_{{\rm bi}}(r,s,q-r-s+2) = (rs -q) N_{{\rm bi}}(r,s,q-r-s+1) + Q(r,s,q), 
\label{eq:recurrence1} 
\end{equation}
where 
\begin{align}
Q(r,s,q) 
&= 
\frac{1}{2} \sum_{r_1=0}^r \sum_{s_1=0}^s \sum_{t=0}^q 
{r \choose r_1} {s \choose s_1} 
\{(r-r_1)s_1 + r_1(s-s_1)\} \nonumber \\
&\quad \times 
N_{{\rm bi}}(r_1,s_1,t-r_1-s_1+1) N_{{\rm bi}}(r-r_1,s-s_1,q-t-(r-r_1)-(s-s_1)+1) 
\label{eq:recurrence1-2}
\end{align}
and $Q(r,s,-1)=0$.
\end{lem}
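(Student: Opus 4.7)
The plan is to prove \eqref{eq:recurrence1} by a double counting argument on the set $\mathcal{P}(r,s,q)$ of ordered pairs $(G, e)$, where $G$ is a connected bipartite $(r, s, q+1)$-graph on fixed vertex set $V = V_1 \sqcup V_2$ and $e \in E(G)$. Counting first by $G$ then by the $q+1$ choices of $e$ yields $|\mathcal{P}(r,s,q)| = (q+1)\, N_{\rm bi}(r, s, q - r - s + 2)$, which is the left-hand side. I would then enumerate the same pairs from the opposite direction, distinguishing whether $e$ is a bridge of $G$.

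If $e$ is not a bridge, then $H := G - e$ is a connected bipartite $(r, s, q)$-graph, and conversely any such $H$ together with any edge of $K_{r, s} \setminus E(H)$ reconstructs a valid pair. Since $K_{r, s}$ has $rs$ edges, there are exactly $rs - q$ admissible choices of $e$ for each $H$, giving the contribution $(rs - q)\, N_{\rm bi}(r, s, q - r - s + 1)$, which is the first term of the right-hand side.

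If $e$ is a bridge, then $G - e$ splits into exactly two connected components $G_1$ and $G_2$. I would parametrize by the bipartition sizes $(r_1, s_1) = (|V(G_1) \cap V_1|, |V(G_1) \cap V_2|)$ and the edge count $t = |E(G_1)|$. Choosing the vertex subsets contributes $\binom{r}{r_1}\binom{s}{s_1}$; specifying the two components as labeled connected bipartite graphs contributes $N_{\rm bi}(r_1, s_1, t - r_1 - s_1 + 1) \cdot N_{\rm bi}(r - r_1, s - s_1, q - t - (r - r_1) - (s - s_1) + 1)$; and the bridge $e$, which must join a vertex of one color class in $G_1$ with a vertex of the opposite color class in $G_2$, has $r_1(s - s_1) + (r - r_1)s_1$ choices. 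Swapping the roles of $G_1$ and $G_2$ produces the same underlying unordered decomposition and hence the same pair $(G,e)$, so the sum overcounts by exactly $2$, yielding $Q(r, s, q)$ after division.

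The main obstacle is bookkeeping the boundary terms so that the indices of the convolution sum can honestly run over the full ranges $0 \le r_1 \le r$, $0 \le s_1 \le s$, $0 \le t \le q$. I would verify that the degenerate cases $(r_1, s_1) \in \{(0,0), (r,s)\}$ and values of $t$ for which one factor has negative Betti number contribute zero, using the conventions \eqref{eq:fequalto0}, $N_{\rm bi}(0,0,0) = 0$, and $0^a = \delta_{0,a}$. The base case $q = -1$ is then immediate: the left-hand side vanishes because of the factor $q + 1$, while for $(r,s) \ne (0,0)$ the term $N_{\rm bi}(r, s, -r - s)$ vanishes by \eqref{eq:fequalto0}, matching $Q(r, s, -1) = 0$.
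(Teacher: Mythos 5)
Your double-counting argument on pairs $(G,e)$ — splitting on whether $e$ is a bridge, so that $G-e$ is either connected or has exactly two components — is correct and is precisely the inverse formulation of the paper's own sketch, which builds $G$ from a $q$-edge graph by adding the edge $e$. The boundary-term bookkeeping you flag at the end (degenerate $(r_1,s_1)$ and negative Betti numbers being killed by \eqref{eq:fequalto0} and the conventions) is indeed the only point that needs care, and your treatment of it is sound.
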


\begin{proof}
Here we give a sketch of the proof. 
Let $G= (V_1,V_2,E)$ be an $(r,s)$-bipartite graph with $q$ edges
and we add an edge to make a connected $(r,s)$-bipartite
 graph with $q+1$ edges. 
There are two cases: (i) $G$ itself is connected and (ii) $G$
 consists of two connected bipartite components. 
For the case (i), we add an edge joining $V_1$ and $V_2$. 
For the case (ii), if $V_j = V_{j,1} \sqcup V_{j,2} \
 (j=1,2)$, then there are four ways to add an edge joining
 two bipartitions, i.e., 
$V_{1,1}$ and $V_{2,1}$,  
$V_{1,1}$ and $V_{2,2}$,  
$V_{1,2}$ and $V_{2,1}$, or 
$V_{1,2}$ and $V_{2,2}$. 
\end{proof}

From Lemma~\ref{lem:recurrence1}, 
we have the following recurrence linear partial differential
equations for generating functions $\{F_k\}_{k=0,1,\dots}$ defined by \eqref{eq:exp_generating_fun}. 
For the sake of convenience, we also consider $F_{-1}$, which 
is equal to $0$ from \eqref{eq:fequalto0}. 

\begin{prop}\label{prop:recurrence} 
For $k=-1,0,1,2,\dots$, 
\begin{align}
\lefteqn{(D_x+D_y+k)F_{k+1}} \nonumber \\ 
&= (D_xD_y- D_x-D_y+1-k)F_k + \sum_{l=0}^{k+1}
D_x F_l \cdot D_y F_{k+1-l}, 
\label{eq:PDE}
\end{align}
where $D_x = x \partial_x$ and $D_y = y \partial_y$. 
\end{prop}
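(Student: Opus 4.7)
The plan is to take the exponential generating function (EGF) transform of the recurrence in Lemma~\ref{lem:recurrence1} coefficient-by-coefficient. First, I would reparametrize the recurrence using the Betti number $k = q-r-s+1$ in place of the edge count $q$. Under this substitution, $q+1 = r+s+k$, $rs-q = rs-r-s+1-k$, and \eqref{eq:recurrence1} becomes
\begin{equation*}
(r+s+k)\,N_{{\rm bi}}(r,s,k+1) = (rs-r-s+1-k)\,N_{{\rm bi}}(r,s,k) + Q(r,s,r+s+k-1).
\end{equation*}
Multiplying by $x^r y^s/(r!s!)$ and summing over $r,s\ge 0$ (the omitted case $(r,s)=(0,0)$ contributes only vanishing constants and does not affect the EGF identity), the two linear terms translate immediately via $D_x x^r = r x^r$ and $D_y y^s = s y^s$ to $(D_x+D_y+k)F_{k+1}$ on the left and $(D_xD_y-D_x-D_y+1-k)F_k$ as the first term on the right.

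The substantive work lies in translating the convolution $Q$. I would reindex the inner summation variable $t$ by the pair of Betti numbers $k_1 = t-r_1-s_1+1$ and $k_2 = (q-t)-(r-r_1)-(s-s_1)+1$ of the two components; these satisfy $k_1+k_2 = k+1$, and the convention $F_{-1}=0$ absorbs any negative-index contributions, so the sum runs over $k_1,k_2\ge 0$ with $k_1+k_2 = k+1$. Setting $r_2 = r-r_1$, $s_2 = s-s_1$, the product $\binom{r}{r_1}\binom{s}{s_1}/(r!s!)$ simplifies to $(r_1!s_1!r_2!s_2!)^{-1}$, which is the standard EGF product measure. The edge-choice factor $r_2 s_1 + r_1 s_2$ then becomes, via $D_x$ and $D_y$ acting on one of the two EGF factors, precisely
\begin{equation*}
\sum_{r,s}\frac{Q(r,s,r+s+k-1)}{r!s!}x^r y^s
= \frac{1}{2}\sum_{k_1+k_2=k+1}\bigl(D_xF_{k_2}\cdot D_yF_{k_1} + D_xF_{k_1}\cdot D_yF_{k_2}\bigr).
\end{equation*}
Relabeling $k_1\leftrightarrow k_2$ in the first summand makes it identical to the second, which cancels the factor $\tfrac{1}{2}$ and yields $\sum_{l=0}^{k+1} D_xF_l\cdot D_yF_{k+1-l}$, completing the identification with \eqref{eq:PDE}.

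The main obstacle is the bookkeeping in the convolution: one must recognize that the factor $(r-r_1)s_1 + r_1(s-s_1)$, after combining with the binomial coefficients and the reciprocal factorials of the EGF, decomposes cleanly into a sum of two products, each carrying one $D_x$ and one $D_y$ on the appropriate factor. The symmetry of the edge-count factor under swapping the two components is what makes the $\tfrac{1}{2}$ disappear; without the $1/2$ in \eqref{eq:recurrence1-2}, each pair of components would be double-counted. Edge cases ($k=-1$ using $F_{-1}=0$; the empty component, which has $N_{{\rm bi}}(0,0,0)=0$ and hence contributes nothing) need only be noted so that the index ranges in the convolution match $l=0,\dots,k+1$ in the stated PDE.
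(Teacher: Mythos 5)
Your proof is correct and follows the paper's own argument essentially verbatim: both take the EGF of the recurrence in Lemma~\ref{lem:recurrence1} after substituting $q=r+s-1+k$, both translate the linear terms via the Euler operators, and both decompose the convolution factor $(r-r_1)s_1 + r_1(s-s_1)$ into two $D_xF_l\cdot D_yF_{k+1-l}$-type products whose symmetry under swapping components cancels the prefactor $\tfrac{1}{2}$. You spell out the index bookkeeping (the reparametrization $k_1+k_2=k+1$ and the role of $F_{-1}=0$) a bit more explicitly than the paper does, but there is no difference in substance.
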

\begin{proof}
From Lemma~\ref{lem:recurrence1}, we have 
\begin{align}
\lefteqn{(r+s+k) N_{{\rm bi}}(r,s,k+1)} \nonumber\\
&= (rs -r-s+1-k) N_{{\rm bi}}(r,s,k) + Q(r,s,r+s-1+k), 
\label{eq:recurrence1} 
\end{align}
where 
\begin{align}
Q(r,s,r+s-1+k) 
&= 
\frac{1}{2} \sum_{r_1=0}^r 
\sum_{s_1=0}^s \sum_{t=0}^{k+1} 
{r \choose r_1} {s \choose s_1} 
\{(r-r_1)s_1 + r_1(s-s_1)\} \nonumber \\
&\quad \times 
N_{{\rm bi}}(r_1,s_1,t) N_{{\rm bi}}(r-r_1,s-s_1,k-t+1). 
\end{align}
Here we used the fact that 
$N_{{\rm bi}}(r_1,s_1,t) N_{{\rm bi}}(r-r_1,s-s_1,k-t+1)=0$ 
unless 
$t+r_1+s_1-1 \geq r_1+s_1-1$ and $t+r_1+s_1-1 \geq r_1+s_1+k$, i.e., $0 \leq t \leq k+1$.  

By multiplying both sides of \eqref{eq:recurrence1} and
 taking sum over $r,s=0$ to $\infty$, we see that 
\begin{align*}
(D_x+D_y+k)F_{k+1}
&= (D_xD_y- D_x-D_y+1-k)F_k \\
&\quad + \frac{1}{2} \sum_{l=0}^{k+1}
\{D_x F_l \cdot D_y F_{k+1-l} 
+ D_y F_l \cdot D_x F_{k+1-l} \} \\
&= (D_xD_y- D_x-D_y+1-k)F_k + \sum_{l=0}^{k+1}
D_x F_l \cdot D_y F_{k+1-l}.  
\end{align*}
\end{proof}

In what follows, we write $T:=F_0$ and use the symbols $T_x,
T_y, Z, W$ in \eqref{eq:TxTyZW}. 
We think of $T$ as a known function below. 
These functions satisfy several useful identities. 

First let us consider the case $k=-1$ in \eqref{eq:PDE}. Then we have 
\begin{align*}
(D_x+D_y-1)F_0 = D_x F_0 \cdot D_y F_0,  
\end{align*}
which is equivalent to 
\begin{equation}
 T_x T_y = T_x+T_y -T.  
\label{eq:T-formula}
\end{equation}

\begin{rem}
As in the above, in Sections 2 and 3, we always use the
subscript $x,y$, etc. for the differentiation by 
Euler operators $D_x=x\partial_x, D_y = y\partial_y$,
etc., but not the usual partial derivative $\partial_x, \partial_y$, etc. 
\end{rem}

For $k=1,2,\dots$, from \eqref{eq:PDE}, we have the following linear PDE 
\begin{equation}
\cL_k  F_{k+1}
= (D_xD_y- D_x-D_y+1-k)F_k + \sum_{l=1}^{k}
D_x F_l \cdot D_y F_{k+1-l}, 
\label{eq:PDE2}
\end{equation}
where  
\begin{equation}
\cL_k := (1-T_y)D_x+(1-T_x)D_y+k. 
\label{eq:Lk} 
\end{equation}
Therefore, in principle, we can solve the
\eqref{eq:PDE2} recursively and 
obtain $F_k$ for $k=1,2,\dots$ in terms of the known function $T$. 
Before solving these equations, we observe several algebraic relations for $T_x$'s. 

\begin{lem} The following identities hold. 
\begin{align}
T_{xx} &= T_x (T_{xy}+1) \label{eq:31}\\
T_{xy} &= T_{yx} = T_x T_{yy} = T_y T_{xx} \label{eq:32}\\
T_{yy} &= T_y (T_{xy}+1). \label{eq:33}
\end{align}
Furthermore, 
\begin{equation}
T_{xy} = \frac{T_xT_y}{1-T_xT_y}. 
\label{eq:34}
\end{equation}
\end{lem}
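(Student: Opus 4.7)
My plan is to reduce all four identities to the pair of standard functional equations for rooted bipartite trees, namely
\[
T_x = x\,e^{T_y}, \qquad T_y = y\,e^{T_x},
\]
and then apply the Euler operators $D_x$, $D_y$ mechanically using the chain rule.

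The first task is to establish this pair. Combinatorially, $T_x=D_xT$ is the exponential generating function of labeled bipartite trees carrying a distinguished root in $V_1$ (the Euler operator $D_x$ supplies the $V_1$-root marker). Each such tree decomposes uniquely as a root in $V_1$, contributing the factor $x$, together with an unordered set of subtrees whose roots are its $V_2$-neighbors, contributing the factor $\exp(T_y)$; the companion equation follows from the symmetry $V_1\leftrightarrow V_2$. Equivalently, both may be read off from Schwenk's formula $N_{\rm bi}(r,s,0)=r^{s-1}s^{r-1}$ by bivariate Lagrange inversion. Granting these, I would proceed as follows. Applying $D_x$ to $T_x=x\,e^{T_y}$, and using the rules $D_x(xf)=xf+xD_xf$ together with $D_x(e^{T_y})=e^{T_y}\,D_xT_y=e^{T_y}T_{xy}$, I get $T_{xx}=T_x+T_xT_{xy}=T_x(T_{xy}+1)$, which is \eqref{eq:31}; the identity \eqref{eq:33} follows by the $x\leftrightarrow y$ symmetry. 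Next, applying $D_y$ to $T_x=x\,e^{T_y}$ and noting that $D_y$ annihilates $x$, I get $T_{xy}=T_x\cdot D_yT_y=T_xT_{yy}$; symmetrically $T_{yx}=T_yT_{xx}$; and $T_{xy}=T_{yx}$ because $D_x$ and $D_y$ commute. This gives \eqref{eq:32}. Finally, combining \eqref{eq:31} with \eqref{eq:32} yields
\[
T_{xy}\;=\;T_yT_{xx}\;=\;T_xT_y(T_{xy}+1),
\]
which is linear in $T_{xy}$ and rearranges to $T_{xy}(1-T_xT_y)=T_xT_y$, i.e. \eqref{eq:34}.

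The only step with real content is the first, establishing the decomposition $T_x=x\,e^{T_y}$ and its twin; everything afterwards is a short chain-rule manipulation, and in particular no further appeal to the recurrences of Proposition~\ref{prop:recurrence} or to the identity \eqref{eq:T-formula} is required. One sanity check that these functional equations are consistent with \eqref{eq:T-formula} is that $T_xT_y=xy\,e^{T_x+T_y}$, and a short calculation from $T=T_x+T_y-T_xT_y$ recovers the same relation — so no hidden obstruction arises.
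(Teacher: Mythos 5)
Your proof is correct and follows essentially the same route as the paper: starting from the functional equations $T_x = x e^{T_y}$ and $T_y = y e^{T_x}$ (which the paper simply cites from Kutzelnigg rather than deriving), applying the Euler operators $D_x$, $D_y$ to obtain \eqref{eq:31}--\eqref{eq:33}, and then substituting \eqref{eq:31} into \eqref{eq:32} to solve the resulting linear equation for $T_{xy}$ and get \eqref{eq:34}. The extra combinatorial justification you give for the pair of functional equations is a welcome addition but does not change the argument.
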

\begin{proof}
It is known that two functions $T_x$ and $T_y$ satisfy the
 following functional equations (cf. \cite[Section 3]{Ku06}):  
\begin{equation}
T_x = x e^{T_y}, \quad 
T_y = y e^{T_x}. 
\label{eq:0}
\end{equation}
Differentiating both sides of \eqref{eq:0} yields the identities \eqref{eq:31}, 
\eqref{eq:32}, and \eqref{eq:33}. 
Plugging \eqref{eq:31} into \eqref{eq:32} yields \eqref{eq:34}. 
\end{proof}

By using the notations \eqref{eq:TxTyZW}, 
we can rewrite \eqref{eq:T-formula} and \eqref{eq:34} as 
\begin{equation}
 T = Z-W
\label{eq:T}
\end{equation}
and 
\begin{equation}
 T_{xy} = \frac{W}{1-W}, 
\label{eq:Txyw}
\end{equation}
respectively. 

Functions of $Z$ and $W$ are well-behaved under the action of $\cL_k$. 
\begin{lem}\label{lem:L0}
Suppose $F(x,y)$ and $G(x,y)$ admit differentiable
 functions $f(z)$ and $g(w)$ such that 
$F(x,y) = f(Z(x,y))$ and $G(x,y) = g(W(x,y))$, respectively. 
Then,  
\begin{align}
 \cL_0 F &= (D_z f)(Z), \label{eq:L0f}\\
 \cL_0 G &= 2 (D_w g)(W), \label{eq:L0g}
\end{align}
where $(D_u f)(u) = u f'(u)$. 
Moreover, $H = h(Z,W)$ for a differentiable function $h(z,w)$,  
\begin{equation}
\cL_0 H = (D_z h)(Z,W)
+ 2(D_w h)(Z,W).  
\label{eq:L0-2}
\end{equation}
\end{lem}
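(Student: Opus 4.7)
The plan is to reduce the lemma to computing $\cL_0 Z$ and $\cL_0 W$ explicitly, after which the three identities follow at once from the chain rule. Note that $\cL_0 = (1-T_y)D_x + (1-T_x)D_y$ is a derivation, being a function-linear combination of the Euler derivations $D_x$ and $D_y$; hence the chain rule applies to compositions $f(Z)$, $g(W)$, and $h(Z,W)$.

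First I would compute $D_x Z$ and $D_y Z$ from $Z = T_x + T_y$, using \eqref{eq:31}--\eqref{eq:33} to rewrite $T_{xx}, T_{yy}$ and \eqref{eq:34} in the form $T_{xy}+1 = 1/(1-W)$. This yields $D_x Z = (T_x+W)/(1-W)$ and $D_y Z = (W+T_y)/(1-W)$. Substituting into $\cL_0 Z$, the numerator $(1-T_y)(T_x+W) + (1-T_x)(W+T_y)$ simplifies via the key identity $T_xT_y = W$ (a restatement of \eqref{eq:T}) to $Z(1-W)$, so $\cL_0 Z = Z$. A parallel computation starting from $W = T_xT_y$ gives $D_x W = W(1+T_x)/(1-W)$ and $D_y W = W(1+T_y)/(1-W)$; the cross terms in $(1-T_y)(1+T_x) + (1-T_x)(1+T_y)$ collapse (again using $T_xT_y = W$) to $2(1-W)$, whence $\cL_0 W = 2W$.

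With $\cL_0 Z = Z$ and $\cL_0 W = 2W$ in hand, the three identities follow immediately from the chain rule: $\cL_0(f(Z)) = f'(Z)\cL_0 Z = Z f'(Z) = (D_z f)(Z)$, $\cL_0(g(W)) = g'(W)\cL_0 W = 2W g'(W) = 2(D_w g)(W)$, and $\cL_0(h(Z,W)) = h_z(Z,W)\cL_0 Z + h_w(Z,W)\cL_0 W = (D_z h)(Z,W) + 2(D_w h)(Z,W)$. There is no real obstacle beyond the bookkeeping in the middle step, where one must simplify several rational combinations of $T_x, T_y, W$ and repeatedly substitute $T_xT_y = W$ to witness the cancellations that yield the clean answers $Z$ and $2W$; the conceptual content is the pleasant fact that, on functions of $(Z,W)$, the operator $\cL_0$ acts as the weighted Euler derivation $D_z + 2 D_w$.
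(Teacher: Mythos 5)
Your proof is correct and follows essentially the same route as the paper: establish $\cL_0 Z = Z$ and $\cL_0 W = 2W$, then invoke the chain rule (linearity of $\cL_0$) to deduce the three identities. The only minor difference is that the paper obtains $\cL_0 W = 2W$ indirectly by computing $\cL_0 T = Z - 2W$ and using $W = Z - T$, whereas you compute $D_x W$ and $D_y W$ directly; both are brief and equivalent.
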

\begin{proof}
From \eqref{eq:31}, we have 
\begin{align*}
 D_x Z &= T_{xx} + T_{yx}
= T_x + (T_x + 1) T_{xy}, \\
 D_y Z &= T_{xy} + T_{yy} = T_y + (T_y + 1) T_{xy}. 
\end{align*}
From \eqref{eq:34}, we see that 
\begin{align*}
 \cL_0 Z 
&= (1-T_y)\{T_x + (T_x + 1) T_{xy}\} 
+ (1-T_x)\{T_y + (T_y + 1) T_{xy}\} \\
&= Z - 2W + 
\{(1-T_y)(T_x + 1) + (1-T_x) (T_y + 1)\} T_{xy} \\
&= Z.  
\end{align*}
In general, since $\cL_0$ is a linear operator, we see that 
\[
 \cL_0 f(Z) 
= f'(Z) \cL_0 Z 
= Z f'(Z)
= (Df)(Z). 
\] 
We note that from the definition of $\cL_0$, 
\[
 \cL_0 T = (1-T_y) T_x + (1-T_x) T_y = Z - 2W. 
\]
Since $W = Z - T$ from \eqref{eq:T}, we have 
\[
\cL_0 W = \cL_0 Z - \cL_0 T = 2W. 
\]
Therefore, 
\[
 \cL_0 g(W) 
= g'(W) \cL_0 W 
= 2W g'(W) 
= 2 (Dg)(W). 
\] 
For general $h(Z,W)$, we obtain \eqref{eq:L0-2} similarly.
This completes the proof. 
\end{proof}

From this formula, we can reduce the analysis on $F(x,y) =
h(Z(x,y), W(x,y))$ to that on $h(z,w)$ of two variables $z$
and $w$.

\section{Explicit expressions of generating functions}

In this section, we solve the PDE \eqref{eq:PDE2} to obtain 
the explicit expressions of generating functions $F_1$ and 
$F_2$. The algebraic relations of $Z, W$ and their derivatives, 
which were seen in the previous section, play an essential role of the proof. 

\subsection{For $F_1$: unicycles}\label{sec:F1}
For unicycles, we will solve \eqref{eq:PDE2} with $k=0$, i.e., 
\begin{equation}
\cL_0  F_1
= (D_xD_y- D_x-D_y+1) F_0. 
\label{eq:PDE2-1}
\end{equation}
By using $T$ and their derivatives, 
we can rewrite 
\eqref{eq:PDE2-1} as 
\begin{equation}
\cL_0 F_1 
= T_{xy} - T_x - T_y + T. 
\label{eq:PDE2-1-2}
\end{equation}
The right-hand side is a function of $W$ and is written 
\[
T_{xy} - T_x - T_y + T = \frac{W}{1-W} - W,  
\]
from which together with \eqref{eq:L0g} we see that $U$ is also a function of
$W$ and obtain the following. 

\begin{proof}[Proof of Proposition~\ref{prop:Uintro}]
Suppose there exists a function $f_1=f_1(w)$ such that $F_1 = f_1(W)$. 
By definition, $f_1$ does not have a constant term, i.e., $f_1(0)=0$. 
Since $\cL_0 F_1 = 2 (Df_1)$ by
\eqref{eq:L0g}, \eqref{eq:PDE2-1-2} can be
expressed as 
\[
 2(Df_1)(w) = \frac{w}{1-w} - w, 
\]
or equivalently, 
\[
 f_1'(w) = \frac{1}{2} \left(\frac{1}{1-w} - 1 \right). 
\]
From this differential equation with $f_1(0)=0$, we obtain 
\[
 f_1(w) = -\frac{1}{2}\big( \log(1-w) + w \big) 
\]
and thus we obtain the assertion. 
\end{proof}

\subsection{For $F_2$: bicycles}\label{sec:F2}

We want to solve \eqref{eq:PDE2} with $k=1$, i.e., 
\begin{equation}
\cL_1  F_2
= (D_xD_y- D_x-D_y)F_1 + D_x F_1 \cdot D_y F_1
\label{eq:PDE2-2}
\end{equation}
where $\cL_1 = \cL_0 + 1$. 
Here $F_1$ has been given in already given in
Proposition~\ref{prop:Uintro} and considered as a known function. 
We will solve this equation to prove Theorem~\ref{thm:W2intro}.  

Before proceeding to the proof, we prepare some lemmas. 
\begin{lem}
\begin{align*}
Z_x = \frac{W+T_x}{1-W}, \quad Z_y = \frac{W+T_y}{1-W},
 \quad Z_{xy} = \frac{2+Z}{(1-W)^2} T_{xy}. 
\end{align*}
Moreover, 
\begin{equation}
 Z_x + Z_y = \frac{Z+2W}{1-W}, \quad  
 Z_x Z_y = \frac{W(Z+W+1)}{(1-W)^2}.
\label{eq:zxzy}
\end{equation}
\end{lem}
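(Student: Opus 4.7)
The plan is to compute each of the five quantities directly from the definitions $Z = T_x + T_y$ and $W = T_x T_y$ together with the algebraic identities already established, namely $T_{xx} = T_x(T_{xy}+1)$, $T_{yy} = T_y(T_{xy}+1)$, $T_{xy} = T_y T_{xx} = T_x T_{yy}$, and $T_{xy} = W/(1-W)$.

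First, I would compute $Z_x = D_x(T_x + T_y) = T_{xx} + T_{xy}$. Using \eqref{eq:31} to replace $T_{xx}$ by $T_x(T_{xy}+1)$ gives $Z_x = T_x + (T_x+1)T_{xy}$, and then inserting $T_{xy} = W/(1-W)$ and combining over the common denominator collapses the numerator to $W + T_x$. The expression for $Z_y$ follows by the symmetric argument using \eqref{eq:33}. Adding them immediately gives $Z_x + Z_y = (Z+2W)/(1-W)$, and multiplying them yields a numerator $(W+T_x)(W+T_y) = W^2 + W(T_x+T_y) + T_xT_y = W^2 + WZ + W$, which factors as $W(W+Z+1)$.

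For the mixed derivative $Z_{xy} = D_x Z_y$, I would apply the quotient rule to $Z_y = (W+T_y)/(1-W)$. This requires computing $D_x W = D_x(T_xT_y) = T_{xx}T_y + T_x T_{xy}$; by \eqref{eq:32} one has $T_{xx}T_y = T_{xy}$, so $D_x W = (1+T_x)T_{xy}$. Substituting this (and $D_x T_y = T_{xy}$) into the quotient rule and factoring $T_{xy}$ out of the numerator produces
\[
T_{xy}\bigl[(2+T_x)(1-W) + (W+T_y)(1+T_x)\bigr].
\]
Expanding and collecting, the cross terms $\pm T_x W$ cancel and the remaining expression reduces to $2 - W + T_x + T_y + T_xT_y = 2 + Z$, using $T_xT_y = W$ to cancel the lone $-W$ with the $+W = +T_xT_y$. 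This gives $Z_{xy} = (2+Z)T_{xy}/(1-W)^2$.

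The whole lemma is therefore purely mechanical once the preceding identities are in hand, and I do not expect a genuine obstacle; the one place where care is needed is the collapse of the numerator in the $Z_{xy}$ computation, where six different products must simplify to $2+Z$ by precisely one cancellation of $T_xW$ terms and one use of $T_xT_y=W$. Keeping the $T_{xy}$ factor outside until the very end makes this bookkeeping transparent.
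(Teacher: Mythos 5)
Your computation is correct and is essentially the expected one; the paper states this lemma without proof, but the same mechanical route (differentiate $Z$ and $W$ using $T_{xx}=T_x(T_{xy}+1)$, $T_{xy}=T_yT_{xx}=T_xT_{yy}$, and $T_{xy}=W/(1-W)$, then clear denominators) is exactly what the paper does for $D_xZ$, $D_yZ$ in the proof of Lemma~\ref{lem:L0} and for $W_x$, $W_y$, $W_{xy}$ in the very next lemma. Your bookkeeping in the $Z_{xy}$ step, including the $\pm T_xW$ cancellation and the one use of $T_xT_y=W$, checks out.
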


\begin{lem}
\begin{equation}
 W_x = (1+T_x)T_{xy}, \quad 
 W_y = (1+T_y)T_{xy} 
\label{eq:wxwy}
\end{equation}
and 
\begin{equation}
 W_{xy} = T_{xy}^2  + \frac{1+Z+W}{(1-W)^2} T_{xy}.
\label{eq:wxy}
\end{equation}
Moreover, 
\begin{equation}
 W_x + W_y = \frac{W(Z+2)}{1-W}, \quad  
 W_x W_y = \frac{W^2(Z+W+1)}{(1-W)^2}
\label{eq:wxwy2}
\end{equation}
and 
\begin{equation}
 Z_x W_y + Z_y W_x = \frac{W(ZW+Z+4W)}{(1-W)^2}. 
\label{eq:zxwy}
\end{equation}
\end{lem}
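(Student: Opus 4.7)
My plan is to derive each of the four identities in \eqref{eq:wxwy}--\eqref{eq:zxwy} by direct computation, using only the definitions $W = T_xT_y$, $Z = T_x+T_y$, the structural identities \eqref{eq:31}--\eqref{eq:34}, and the formulas for $Z_x, Z_y$ from the previous lemma. For \eqref{eq:wxwy}, I would apply the Euler operator $D_x$ to $W$ via the product rule to obtain $W_x = T_{xx}T_y + T_xT_{xy}$, substitute $T_{xx} = T_x(T_{xy}+1)$ from \eqref{eq:31}, and collect terms to get $W_x = W + (W+T_x)T_{xy}$. Using $T_{xy} = W/(1-W)$ from \eqref{eq:34}, the combination $W + WT_{xy}$ collapses to $T_{xy}$, leaving $W_x = (1+T_x)T_{xy}$; the formula for $W_y$ follows by $x \leftrightarrow y$ symmetry.

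For \eqref{eq:wxy}, I would differentiate the compact form just obtained:
\[
W_{xy} = D_y\bigl((1+T_x)T_{xy}\bigr) = (D_yT_x)\,T_{xy} + (1+T_x)\,D_yT_{xy} = T_{xy}^2 + (1+T_x)\,D_yT_{xy},
\]
since $D_yT_x = T_{xy}$. Writing $T_{xy} = (1-W)^{-1}-1$ gives $D_yT_{xy} = W_y/(1-W)^2$, and substituting the already-derived $W_y = (1+T_y)T_{xy}$ together with the elementary factorization $(1+T_x)(1+T_y) = 1+Z+W$ yields the stated identity. The sum and product in \eqref{eq:wxwy2} then follow immediately from \eqref{eq:wxwy}: $W_x + W_y = (2+Z)T_{xy}$ and $W_xW_y = (1+Z+W)T_{xy}^2$, after which $T_{xy} = W/(1-W)$ produces the claimed rational forms.

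Finally, for \eqref{eq:zxwy}, I would combine \eqref{eq:wxwy} with the expressions $Z_x = (W+T_x)/(1-W)$ and $Z_y = (W+T_y)/(1-W)$ from the preceding lemma, factoring out the common $W/(1-W)^2$ and expanding the numerator $(W+T_x)(1+T_y) + (W+T_y)(1+T_x)$. Collapsing $T_xT_y \mapsto W$ and $T_x+T_y \mapsto Z$ reduces this symmetric polynomial to $ZW + Z + 4W$, giving the claim. There is no substantive obstacle; the main thing to watch is that $D_x, D_y$ are Euler operators, so the product rule must be applied as $D_x(fg) = (D_xf)g + f(D_xg)$ without spurious factors of $x$ or $y$, and every right-hand side must be reduced to the canonical variables $(Z,W)$ rather than left as a mixed expression in $T_x, T_y$ so that the results can be fed cleanly into the PDE for $F_2$ in the next subsection.
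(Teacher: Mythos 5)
Your proposal is correct and follows essentially the same route as the paper's proof: differentiate $W=T_xT_y$ via the product rule, reduce using the identities \eqref{eq:31}--\eqref{eq:34}, then build the remaining identities from the compact forms $W_x=(1+T_x)T_{xy}$, $W_y=(1+T_y)T_{xy}$ and the earlier formulas for $Z_x,Z_y$. The only cosmetic difference is in deriving $W_x$: the paper collapses $T_{xx}T_y=T_{xy}$ directly from \eqref{eq:32}, whereas you substitute \eqref{eq:31} and then invoke \eqref{eq:34} to simplify $W+WT_{xy}=T_{xy}$; both are valid, the paper's step being one line shorter.
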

\begin{proof}
First it follows from \eqref{eq:32} that 
\[
 W_x = (T_x T_y)_{x} = T_{xx} T_y + T_x T_{xy} 
= (1+T_x)T_{xy}. 
\]
By symmetry, we have the second equation in
 \eqref{eq:wxwy}. 
Next it follows from \eqref{eq:34} that 
\begin{equation}
 (T_{xy})_y = \left(\frac{W}{1-W}\right)_y =
 \frac{1}{(1-W)^2} W_y
=  \frac{1}{(1-W)^2} (1+T_y) T_{xy}
\label{eq:txyy} 
\end{equation}
Then, 
\[
 W_{xy} = ((1+T_x)T_{xy})_y = T_{xy}^2  + (T_x+1) (T_{xy})_y
= T_{xy}^2  + (1+T_x)(1+T_y)
\frac{1}{(1-W)^2} T_{xy}. 
\]
\end{proof}

Later we will also use the following. 

\begin{proof}[Proof of Theorem~\ref{thm:W2intro}]
Let $G=-2F_1$, i.e., 
\[
 G(W) = \log (1-W) + W.  
\]
First we observe that 
\[
 G_x = \left(\frac{-1}{1-W}+1\right)W_x 
= \frac{-W}{1-W}W_x
= -T_{xy}W_x. 
\]
Similarly, $G_y = -T_{xy} W_y$. Hence, 
\[
 G_x + G_y = -T_{xy} (W_x + W_y) = -(2+Z)T_{xy}^2. 
\]
Next it follows from \eqref{eq:wxwy}, 
\eqref{eq:wxy} and \eqref{eq:txyy} that 
\begin{align*}
 G_{xy} 
&= -(T_{xy}W_x)_y \\
&= -(T_{xy})_y W_x - T_{xy} W_{xy}  \\
&= -\frac{1}{(1-W)^2} (1+T_y) T_{xy} \cdot (1+T_x)T_{xy}
- T_{xy} \left(T_{xy}^2  + (1+T_x)(1+T_y)
\frac{1}{(1-W)^2} T_{xy}\right)
 \\
&=
-T_{xy}^2 \left\{
\frac{2}{(1-W)^2}(1+T_x)(1+T_y)+ T_{xy}
\right\} \\
&=
-T_{xy}^2 \left\{
\frac{2}{(1-W)^2}(1+Z+W)+ T_{xy}
\right\}.  
\end{align*}
Lastly, we have 
\begin{align*}
G_x G_y 
&= T_{xy}^2 W_x W_y 
= T_{xy}^4 (1+T_x)(1+T_y)
\end{align*}
Putting the above all together in \eqref{eq:PDE2-2}, we have 
\begin{align}
4\cL_1 F_2 
&= 2 (G_x+G_y) + G_x G_y - 2 G_{xy} \nonumber\\
&= -2(Z+2)T_{xy}^2 + T_{xy}^4 (1+T_x)(1+T_y) + 2 
T_{xy}^2 \left\{
\frac{2}{(1-W)^2} (1+T_x)(1+T_y)
+ T_{xy} 
\right\} \nonumber\\
&= \frac{T_{xy}^2}{(1-W)^2}
\left\{
-2(Z+2)(1-W)^2 + W^2 (1+Z+W) + 
4 (1+Z+W) + 2W(1-W)
\right\} \nonumber\\
&= \frac{W^2}{(1-W)^4}
\big\{
(-W^2+4 W+2) Z + (W^2-5 W+14) W
\big\}. 
\label{eq:forF2}
\end{align}
Suppose there exist functions $a_0(w)$ and $a_1(w)$ 
such that $F_2 = f_2(Z,W)$ with $f_2(z,w) := a_1(w)z+a_0(w)$. 
Since $\cL_1 = \cL_0 + 1$, from \eqref{eq:L0-2}, 
the \eqref{eq:forF2} can be
expressed as 
\begin{equation}
4(D_z f_2 + 2 D_w f_2 + f_2)
= 
\frac{w^2}{(1-w)^4}
\{
(-w^2+4 w+2) z + (w^2-5 w+14) w
\}. 
\label{eq:ab1}
\end{equation}
On the other hand, since $f_2(z,w) = a_1(w)z+a_0(w)$, we have 
\begin{align}
D_z f_2 + 2 D_w f_2 + f_2
&= a_1(w)z + 2 \{(Da_1)(w) z + (Da_0)(w)\} + a_1(w)z + a_0(w)
 \nonumber \\
&= \{2a_1(w)+2 (Da_1)(w)\} z + \{2 (Da_0)(w) + a_0(w)\}. 
\label{eq:ab2}
\end{align}
Comparing \eqref{eq:ab1} with \eqref{eq:ab2} yields 
\[
a_0(w) +  2(Da_0)(w) = 
\frac{w^3}{4(1-w)^4}
(w^2-5 w+14)
\]
and 
\[
 2a_1(w)+2 (Da_1)(w) = 
\frac{w^2}{4(1-w)^4}(-w^2+4 w+2).  
\]
On the other hand, by the definition of $F_2(x,y)$, the function $f_2(z,w)$ does not have the terms $z^i, i=0,1,2,\dots$ since if such a term appears in $f_2(z,w)$, so do the terms $x^i$ and $y^i$ in $F_2(x,y)$, which contradicts to the fact that $N_{\rm{bi}}(i,0,2)=N_{\rm{bi}}(0,i,2)=0$. This implies that $a_0(0) = a_1(0)=0$. 
Then, we can easily solve the above differential equations with initial conditions $a_0(0) = a_1(0)=0$ to obtain
\[
a_0(w) = \frac{w^3(6-w)}{12(1-w)^3}, \quad 
a_1(w) = \frac{w^2(2+3w)}{24(1-w)^3}. 
\]
Therefore, 
\[
 f_2(z,w) = \frac{w^2(2+3w)}{24(1-w)^3} z +
 \frac{w^3(6-w)}{12(1-w)^3}. 
\]
This completes the proof. 
\end{proof}

\begin{rem}\label{rem:f3f4}
We can continue the above computations for $F_k(x,y) = f_k(z,w)$. 
Here we give $f_3(z,w)$ and $f_4(z,w)$ just for the reference:  
\begin{align*}
 f_3(z,w)
&= \frac{w^3(5+41w-23w^2+8w^3-w^4)}{24(1-w)^6} + \frac{w^3(32+34w-9w^2+3w^3)}{48(1-w)^6} z \\
&\quad + \frac{w^2(1+8w+6w^2)}{48(1-w)^6} z^2 
\end{align*}
and 
\begin{align*}
 f_4(z,w)
&=\frac{w^3\left(-76 w^7+809 w^6-3746 w^5+9889 w^4-15356
 w^3+22820 w^2+7680 w+80\right) }{2880 (1-w)^9} \\
&+\frac{w^3\left(230 w^6-1425 w^5+5568 w^4-6617
   w^3+30468 w^2+35988 w+2088\right) }{5760
 (1-w)^9} z \\
&+\frac{w^3\left(61 w^4+64 w^3+1186 w^2+1692 w+312\right) 
}{576 (1-w)^9} z^2 \\
&+\frac{w^2\left(254 w^4+1919
   w^3+2624 w^2+704 w+24\right) }{5760 (1-w)^9} z^3. 
\end{align*}
These expressions lead to \eqref{eq:generalfk} in Theorem~\ref{thm:generalfk}. 
\end{rem}

\section{Asymptotic behaviors of the coefficients}
\label{sec:asymptotics}

\subsection{Asymptotic behavior of the coefficients of $F_1(x,x)$}

We use the notation \eqref{eq:braket}. 
We recall the convolution of exponential generating
functions 
\begin{equation}
\bra x^n \ket A(x)B(x) = \sum_{k=0}^n {n \choose k} a_k
b_{n-k} 
\label{eq:convolution} 
\end{equation}
when $\bra x^n \ket A(x) = a_n$ and $\bra x^n \ket B(x) =
b_n$. 
For an exponential power series $C(x,y) = \sum_{r,s=0}^{\infty} c_{rs}
 \frac{x^ry^s}{r!s!}$ of two variables, we use the notation 
\[
 \bra x^r y^s \ket C(x,y) = c_{rs},  
\]
and we note that the coefficients of the diagonal
$C(x,x)$ is given by 
\[
 \bra x^n \ket C(x,x) = \sum_{r+s=n} {n \choose r}c_{rs}.  
\]
In Section~\ref{sec:F1}, we derived the generating function
$F_1(x,y)$ for unicycles. 
In this section, we focus on the coefficients of the
diagonal $F_1(x,x)$, 
\[
 N_{\rm{bi}}(n,1):= \bra x^n \ket F_1(x,x) = 
\sum_{r+s=n} {n \choose r} N_{{\rm bi}}(r,s,1), 
\]
which corresponds to the total of the numbers of complete
unicycles over $n$ vertices. 
We will see the asymptotic behavior of $N_{\rm{bi}}(n,1)$ as $n \to
\infty$. 

From Proposition~\ref{prop:Uintro}, we have
\begin{equation}
F_1(x,x)=\frac{1}{2}\sum_{k=2}^{\infty}\frac{W(x,x)^k}{k}. 
\label{eq:coeffofUxx}
\end{equation}
First we consider the coefficients of the diagonal $W(x,x)$. 
Since $W=T_x+T_y-T$ from \eqref{eq:T}, it is easy to see that 
\[
W(x,y) = \sum_{r,s =1}^{\infty} \frac{w(r,s)}{r!s!}x^r y^s,  
\]
where $w(r,s) = r^{s-1}s^{r-1}(r+s-1)$. 
Hence, we have 
\[
W(x,x) =\sum_{n=2}^{\infty} 
\left(\sum_{r+s=n}\frac{w(r,s) n!}{r!s!}\right)\frac{x^n}{n!}
=: \sum_{n =2}^{\infty} w_n \frac{x^n}{n!}, 
\]
where 
\begin{align}\label{w*1}
w_n & =\sum_{r+s=n}\frac{r^{s-1}s^{r-1}(r+s-1) n!}{r!s!}\nonumber\\
&=(n-1)\sum_{r=1}^{n-1}\binom{n}{r}r^{n-r-1}(n-r)^{r-1}.
\end{align}
The sum in \eqref{w*1} can be computed by the following
identity (cf. \cite{KP09}). 
\begin{lem}\label{keylem1} For $n=2,3,\dots$,  
\begin{align}\label{eq1}
\sum_{r=1}^{n-1}\binom{n}{r}r^{n-r-1}(n-r)^{r-1}=2n^{n-2}.
\end{align}
\end{lem}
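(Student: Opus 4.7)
The plan is to prove \eqref{eq1} by a short bijective argument combining the bipartite Cayley formula \eqref{eq:spanning_trees} with Cayley's classical formula for spanning trees of $K_n$.

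First I would interpret the left-hand side combinatorially. For each $r$ with $1 \le r \le n-1$, the factor $\binom{n}{r}$ counts ordered bipartitions $[n] = V_1 \sqcup V_2$ with $|V_1|=r$, and by \eqref{eq:spanning_trees} the factor $r^{n-r-1}(n-r)^{r-1}$ counts spanning trees of the complete bipartite graph $K_{V_1,V_2}$. Summing over $r$, the left-hand side of \eqref{eq1} is precisely the cardinality of the set $\cA_n$ of triples $(V_1, V_2, T)$, where $(V_1, V_2)$ is an ordered bipartition of $[n]$ with both blocks nonempty and $T$ is a spanning tree of $K_{V_1, V_2}$.

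Next, I would introduce the forgetful map $\Phi\colon \cA_n \to \cB_n$ sending $(V_1, V_2, T)$ to the underlying tree $T$ regarded as a spanning tree of $K_n$, where $\cB_n$ denotes the set of all spanning trees of $K_n$. I would then verify that $\Phi$ is exactly $2$-to-$1$: every tree $T \in \cB_n$ is bipartite (it contains no odd cycle) and connected, so it admits a unique proper $2$-coloring of its vertex set up to swapping the two colors. For $n \ge 2$ both color classes are nonempty, hence there are precisely two ordered bipartitions $(V_1, V_2)$ for which $T$ sits inside $K_{V_1, V_2}$, namely the two color classes in either order. Combined with Cayley's formula $|\cB_n| = n^{n-2}$, this immediately yields $|\cA_n| = 2 n^{n-2}$, which is \eqref{eq1}.

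The argument reduces to a bookkeeping step once $\cA_n$ has been identified, so no substantial obstacle arises; the only point requiring care is checking that both blocks of the bipartition are nonempty (which forces $n\ge 2$ and matches the range $1\le r\le n-1$ in the sum). As an alternative, a purely algebraic derivation is also available: the functional equations \eqref{eq:0} specialize on the diagonal to $\tau = x e^{\tau}$ for $\tau(x) := T_x(x,x) = T_y(x,x)$, so $\tau$ is the Cayley tree function with $\bra x^n \ket \tau = n^{n-1}$, while \eqref{eq:T-formula} gives $T(x,x) = 2\tau - \tau^2$, and the classical identity $\tau - \tau^2/2 = \sum_{n \ge 1} n^{n-2}\, x^n/n!$ for unrooted labeled trees then extracts \eqref{eq1} as the coefficient of $x^n/n!$ in $T(x,x)$.
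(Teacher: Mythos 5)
Your bijective argument is correct and is essentially identical to the paper's proof: the set you call $\cA_n$ is exactly the set $S_n^b$ of bipartite spanning trees, and your map $\Phi$ is the paper's forgetful map $\eta$, shown to be two-to-one in the same way before invoking Cayley's formula. The algebraic alternative you sketch at the end (diagonalizing the tree-function relations) is a valid extra route but is not the one the paper takes.
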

\begin{proof}
Here we give a combinatorial proof of the identity. 
Let $S_{n}$ and $S_{n}^b$ be the set of labeled
 spanning trees on $K_{n}$ and that of
 labeled spanning trees on the complete bipartite graph with
 $n$ vertices, respectively. Also, let $S_{r,s}^b$ be the
 set of labeled spanning trees on the complete bipartite
 graph $K_{r,s}$. Then
\begin{align*}
S_{n}^b = 
\bigsqcup_{1\le r \le n-1} S_{r,n-r}^b.
\end{align*} 
For $(V_1\sqcup V_2, E_{r,n-r}) \in S_{r, n-r}^b$ with
 $|V_1|=r$ and $|V_2|=n-r$, 
we define a map $\eta: S_{n}^b \to S_{n}$ by 
\begin{align*}
\eta((V_1\sqcup V_2, E_{r,n-r})):=(V, E_{r,n-r}), 
\end{align*} 
i.e., the map of forgetting partitions. 
Since every spanning tree on $K_{n}$ is bipartite, $\eta$
 is surjective. Moreover, $\eta$ is two-to-one
 mapping. Indeed, for $1\le r \le n-1$ and $(V_1\sqcup
 V_2, E_{r,n-r}) \in S_{r, n-r}^b$, there exists a
 unique spanning tree $(V_1'\sqcup V_2', E_{n-r,r}) \in
 S_{n-r,r}^b$ such that $V_1'=V_2, V_2'=V_1$ and
 $E_{n-r,r}=\{(i,j) \in V_1'\times V_2' : (j,i) \in
 E_{r,n-r} \}$. Now we derive \eqref{eq1}. 
For $1\le r \le n-1$, $|S_{r,n-r}^b|
 =\binom{n}{r}N_{{\rm bi}}(r,n-r,0)=\binom{n}{r}r^{n-r-1}(n-r)^{r-1}$
 by the choice of labeled $r$ vertices in $V_1$ and 
\eqref{eq:spanning_trees}.  
Hence
\begin{align*}
|S_{n}^b| = \sum_{r=1}^{n-1}\binom{n}{r}r^{n-r-1}(n-r)^{r-1}.
\end{align*}
On the other hand, $|S_{n}| = n^{n-2}$ by Cayley's formula. 
Therefore, we conclude that \eqref{eq1} holds from the two-to-one correspondence of $\eta$. 
\end{proof}

\begin{cor}\label{cor:wn} 
For $n=1,2,\dots$, 
\begin{equation}
w_n = \bra x^n \ket W(x,x) = 2(n-1) n^{n-2}. 
\label{eq:w1} 
\end{equation}
\end{cor}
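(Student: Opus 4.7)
The plan is that this corollary is essentially a one-line consequence of Lemma~\ref{keylem1}, so the proposal is to simply chain the identities that have already been established. First I would recall from \eqref{w*1} that
\[
w_n = (n-1)\sum_{r=1}^{n-1}\binom{n}{r}r^{n-r-1}(n-r)^{r-1}\qquad (n\ge 2),
\]
and then substitute the identity
\[
\sum_{r=1}^{n-1}\binom{n}{r}r^{n-r-1}(n-r)^{r-1}=2n^{n-2}
\]
from Lemma~\ref{keylem1}. This immediately yields $w_n=2(n-1)n^{n-2}$ for $n\ge 2$.

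Next I would handle the boundary case $n=1$ separately. Since the expansion of $W(x,x)$ starts at $x^2$ (the defining sum is over $r,s\ge 1$, so the lowest-degree diagonal monomial has degree $2$), we have $\langle x\rangle W(x,x)=0$. On the other hand, the proposed formula gives $2(1-1)\cdot 1^{-1}=0$, matching. Thus the formula $w_n=2(n-1)n^{n-2}$ is valid for all $n\ge 1$.

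There is no real obstacle; the computational content sits entirely in Lemma~\ref{keylem1}, whose combinatorial proof via the two-to-one map $\eta : S_n^b \to S_n$ has already been carried out. The only care needed is to verify that the indexing conventions match (in particular that the factor $(n-1)$ in \eqref{w*1} coming from $w(r,s)=r^{s-1}s^{r-1}(r+s-1)$ with $r+s=n$ is correctly pulled out of the sum), and to confirm the $n=1$ corner case as above.
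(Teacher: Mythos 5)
Your proposal is correct and follows exactly the route the paper intends: the corollary is stated without a separate proof precisely because it is the immediate combination of \eqref{w*1} and Lemma~\ref{keylem1}, with the $n=1$ case trivially checked. Nothing further is needed.
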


Now we proceed to the case of the power of $W(x,x)$. 
For $k=1,2,\dots$, we write 
\[
 w_n^{*k} := \bra x^n \ket W(x,x)^k. 
\]
In particular, $w_n^{*1} = w_n$ in Corollary~\ref{cor:wn}. 
Note that the smallest degree of the terms in $W(x,x)$ is 2
and hence $w_n^{*k} = 0$ for $n=1,2,\dots,2k-1$. 
From \eqref{eq:convolution}, $w_n^{*k}$ is the $k$-fold convolution of
$(w_n)_{n=2,3,\dots}$ and inductively defined by 
\begin{equation}
w_n^{*(k+1)}
=\sum_{r=2k}^{n-2}\binom{n}{r}w_r^{*k} w_{n-r}. 
\label{eq:repeated-conv}
\end{equation}
From \eqref{eq:coeffofUxx}, the coefficients $N_{\rm{bi}}(n,1)$ of
$F_1(x,x)$ are given by
\begin{align}\label{coefu}
N_{\rm{bi}}(n,1)
=\frac{1}{2}\sum_{2 \le k \le n/2} \frac{w_n^{*k}}{k}. 
\end{align}

\begin{prop}\label{prop:wnconv}
For $k=1,2,\dots, \lfloor n/2 \rfloor$,
\begin{align}
 w_n^{*k} = 2k \cdot (2k)! n^{n-2k-1} {n \choose 2k}. 
\label{eq:conv}
 \end{align}
\end{prop}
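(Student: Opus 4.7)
The plan is to bypass the convolution recurrence \eqref{eq:repeated-conv} entirely by recognizing that on the diagonal $y=x$, the function $W(x,x)$ collapses to a power of the classical tree function. Setting $y = x$ in the functional equations \eqref{eq:0}, symmetry forces $T_x(x,x) = T_y(x,x)$; writing $\tau(x)$ for this common value, one immediately gets $\tau = xe^{\tau}$, so $\tau$ is the tree function, with $\tau(x) = \sum_{n\ge 1}n^{n-1}\frac{x^n}{n!}$. Consequently
$$W(x,x) \;=\; T_x(x,x)\,T_y(x,x) \;=\; \tau(x)^2,$$
and therefore $w_n^{*k} = \bra x^n \ket \tau(x)^{2k}$.

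The second step is a one-line application of the Lagrange inversion formula to $x = \tau e^{-\tau}$. For any integer $m \ge 1$,
$$[x^n]\,\tau(x)^m \;=\; \frac{m}{n}\,[\tau^{n-m}]\,e^{n\tau} \;=\; \frac{m}{n}\cdot\frac{n^{n-m}}{(n-m)!}.$$
Setting $m = 2k$ and multiplying by $n!$ to pass to exponential coefficients yields
$$w_n^{*k} \;=\; \frac{2k\cdot n!}{(n-2k)!}\,n^{n-2k-1} \;=\; 2k\cdot(2k)!\,n^{n-2k-1}\binom{n}{2k},$$
which is exactly \eqref{eq:conv}; a sanity check at $k=1$ recovers Corollary~\ref{cor:wn}.

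I do not anticipate any real obstacle: the substantive step is the symmetry reduction that identifies $W(x,x)$ with $\tau(x)^2$, after which the formula falls out of a textbook coefficient extraction. The only alternative I see would be to induct on $k$ using the recurrence \eqref{eq:repeated-conv}, which would require an Abel-type summation identity generalizing Lemma~\ref{keylem1} (namely an evaluation of $\sum_j \binom{m}{j}(2k+j)^{j-1}(m-j-1)(m-j)^{m-j-2}$); this inductive route is feasible but significantly more computational, so the Lagrange-inversion argument above is the one I would present.
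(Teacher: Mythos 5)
Your proof is correct and takes a genuinely different, and more direct, route than the paper. The paper proves Proposition~\ref{prop:wnconv} by induction on $k$ using the convolution recurrence \eqref{eq:repeated-conv}, reducing the inductive step to the Abel polynomial identity \eqref{eq:Anxy} and then extracting the relevant coefficient by differentiating \eqref{eqS} $2k$ times in $x$ and twice in $y$. You instead observe, via the functional equations \eqref{eq:0} together with the symmetry $T(x,y)=T(y,x)$, that $\tau(x):=T_x(x,x)=T_y(x,x)$ satisfies $\tau=xe^{\tau}$, hence equals the tree function $Y(x)$; thus $W(x,x)=Y(x)^2$ and $w_n^{*k}=\bra x^n\ket Y(x)^{2k}$, which drops out of a single application of Lagrange inversion. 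It is worth noting that the paper does eventually establish both ingredients of your argument --- $T_x(x,x)=T_y(x,x)=Y(x)$ in \eqref{eq:T_xT_yY} and the coefficient formula for $Y(x)^m$ in Lemma~\ref{lem:polyegftree} --- but only later, in Sections~4.2 and 6, and it derives Lemma~\ref{lem:polyegftree} by yet another induction using the Abel identity rather than Lagrange inversion, without ever circling back to re-derive Proposition~\ref{prop:wnconv}. Your route collapses the proposition and those later facts into one clean step. What the paper's approach buys is self-containedness within the Abel-identity toolbox it has already set up (Lemma~\ref{keylem1}, equation \eqref{eq:Anxy}); what your approach buys is brevity and conceptual transparency, exhibiting \eqref{eq:conv} as nothing more than the textbook coefficient formula for powers of the tree function.
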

\begin{proof}
For fixed $n$, we prove the \eqref{eq:conv} by induction in $k$. 
For $k=1$, it is obviously true since $w_n^{*1} = w_n$. 
Suppose that \eqref{eq:conv} holds for up to $k$, then by \eqref{eq:w1} and 
\eqref{eq:repeated-conv}, we have
\begin{align*}
w_n^{*(k+1)} 
&= \sum_{r=1}^n {n \choose r} w_r^{*k} w_{n-r} \\
&= \sum_{r=2k}^{n-2} \binom{n}{r} 2k \cdot (2k)!
 r^{r-2k-1} {r \choose 2k} \cdot 2(n-r-1)(n-r)^{n-r-2}\\
&=4k\sum_{r=2k}^{n-2} \binom{n}{r} (r-1)\cdots (r-(2k-1))r^{r-1-(2k-1)}(n-r-1)(n-r)^{n-r-2}.
\end{align*}
Now we introduce a class of polynomials which appears in Abel's generalization of the binomial formula \cite[Section 1.5]{Riordan}: 
\begin{align*}
A_n(x,y; p,q) :=\sum_{r=0}^n \binom{n}{r}(x+r)^{r+p}(y+n-r)^{n-r+q}.
\end{align*}
In particular, when $p=q=-1$, it is known \cite[p.23]{Riordan} that 
\begin{equation}
A_n(x,y; -1,-1)= (x^{-1}+y^{-1})(x+y+n)^{n-1}. 
\label{eq:Anxy}
\end{equation}
Multiplying both sides by $xy$ yields 
\begin{align}\label{eqS}
(x+y)Q(x,y) 
&= xy\sum_{r=0}^n \binom{n}{r}(x+r)^{r-1}(y+n-r)^{n-r-1}\nonumber\\
&= x(x+n)^{n-1}+y(y+n)^{n-1}+xyS(x,y),
\end{align}
where $Q(x,y):=(x+y+n)^{n-1}$ and 
\begin{align*}
S(x,y) :=\sum_{r=1}^{n-1} \binom{n}{r}(x+r)^{r-1}(y+n-r)^{n-r-1}.
\end{align*}
By the generalized Leibniz rule, 
for $p \in \N$, we have
\begin{align*}
\partial_x^p(xS(x,y))
&= p \partial_x^{p-1} S(x,y) 
+ x\partial_x^p S(x,y),\\
\partial_x^p ((x+y)Q(x,y)) 
&= p\partial_x^{p-1} Q(x,y)+(x+y) \partial_x^pQ(x,y),
\end{align*}
which gives
\begin{align}
\partial_x^p \partial_y^2 (xyS(x,y)) \Big|_{x=y=0} 
&= 2pS^{(p-1,1)}(0,0), 
\label{eq:Sderivative}
\\
\partial_x^p \partial_y^2 ((x+y)Q(x,y)) \Big|_{x=y=0} 
&= (p+2)Q^{(p-1,2)}(0,0), 
\label{eq:Qderivative}
\end{align}
where $S^{(p,q)}(x,y):=\partial_x^p \partial_y^q S(x,y)$ and
$Q^{(p,q)}(x,y):= \partial_x^p \partial_y^q Q(x,y)$.

For $k=1,2,\dots, \lfloor n/2 \rfloor$, 
differentiating both sides of \eqref{eqS} $2k$ times with respect to $x$ and twice with respect to $y$ 
and using \eqref{eq:Sderivative} and \eqref{eq:Qderivative} with $p=2k$ yield 
\begin{align*}
\partial_x^{2k} \partial_y^2 ({\rm RHS\ of\
 \eqref{eqS}})\Big|_{x=y=0} 
&= \partial_x^{2k} \partial_y^2 (xyS(x,y))\Big|_{x=y=0}\\
&=4kS^{(2k-1,1)}(0,0)=w_n^{*(k+1)},\\
\partial_x^{2k} \partial_y^2 ({\rm LHS\ of\
 \eqref{eqS}})\Big|_{x=y=0} &=(2k+2)Q^{(2k-1,2)}(0,0)\\
&=2(k+1) \cdot (n-1)\cdots (n-(2k+1))n^{n-1-(2k+1)}\\
&=2(k+1)\cdot (2(k+1))! n^{n-2(k+1)-1}\binom{n}{2(k+1)},
\end{align*}
which complete the proof of \eqref{eq:conv}. 
\end{proof}

Now we derive the leading asymptotic behavior of $N_{\rm{bi}}(n,1)$ as $n \to \infty$. 
\begin{proof}[Proof of Theorem~\ref{thm:asympofun}]
 By \eqref{coefu} and \eqref{eq:conv}, we have
\begin{align*}
N_{\rm{bi}}(n,1)  
&= \sum_{2 \le k \le n/2} (2k)! n^{n-2k-1} {n \choose 2k}\\
&=n^{n-1} \sum_{2 \le k \le n/2}\dfrac{n !}{(n-2k)! n^{2k}}.
\end{align*}
The last summation is similar to the Ramanujan $Q$-function, so we treat this summation in the same way as in \cite[Section 4]{FS09}. Let $k_0$ be an integer such that $k_0=o(n^{2/3})$ and we split the summation into two parts:
\begin{align*}
\sum_{2 \le k \le n/2}\dfrac{n !}{(n-2k)! n^{2k}}=\sum_{2 \le k \le k_0}\dfrac{n !}{(n-2k)! n^{2k}} + \sum_{k_0 < k \le n/2}\dfrac{n !}{(n-2k)! n^{2k}}.
\end{align*}
For $k=o(n^{2/3})$, by \cite[Theorem 4.4]{FS09} we have
\begin{align*}
\dfrac{n !}{(n-2k)! n^{2k}}=e^{-2k^2/n}\left(1+O\left(\frac{k}{n}\right)+O\left(\frac{k^3}{n^2}\right)\right).
\end{align*}
Because the terms in the summation are decreasing in $k$, and $e^{-2k^2/n}$ are exponentially small for $k>k_0$, the second summation is negligible. Therefore,
\begin{align*}
\sum_{2 \le k \le n/2}\dfrac{n !}{(n-2k)! n^{2k}}&=\sum_{2 \le k \le k_0}e^{-2k^2/n}\left(1+O\left(\frac{k}{n}\right)+O\left(\frac{k^3}{n^2}\right)\right) +o(1)\\
 &=\sum_{2 \le k \le k_0}e^{-2k^2/n}+O(1).
\end{align*}
Again, since $e^{-2k^2/n}$ are exponentially small for $k>k_0$, we can take the summation for $2\le k \le n/2$. Therefore, by Euler-Maclaurin's formula we have 
\begin{align*}
\sum_{2 \le k \le n/2}e^{-2k^2/n}=\sqrt{n}\int_0^\infty e^{-2x^2}dx+O(1)=\sqrt{\dfrac{\pi}{8}}\sqrt{n}+O(1),
\end{align*}
which completes the proof.
\end{proof}

\subsection{Asymptotic behavior of the coefficients of $F_2(x,x)$}
\label{subsec:F2}
We deal with the coefficients of the diagonal $F_2(x,x)$, namely $N_{\rm{bi}}(n,2)$, which is defined by \eqref{eq:defN_{bi}(n,k)} with $k=2$. 
From \eqref{eq:Tintro}, 
we have 
\begin{align*}
 Z(x,y) = T_x + T_y 
= \sum_{r,s=0}^{\infty} \frac{(r+s)r^{s-1}s^{r-1}}{r!s!}
 x^r y^s.  
\end{align*}
In particular, by Lemma \ref{keylem1} we have
\begin{align}\label{eq:diagonalZ}
 Z(x,x) 
&= \sum_{r,s=0}^{\infty} 
\frac{(r+s)r^{s-1}s^{r-1}}{r!s!}  x^{r+s}\nonumber\\
&= \sum_{n=1}^{\infty} n 
\left(\sum_{r+s=n} \frac{n! r^{s-1}s^{r-1}}{r!s!}\right)
\frac{x^n}{n!} 
= \sum_{n=1}^{\infty} 2n^{n-1} \frac{x^n}{n!}.
\end{align}
Let $Y(x)$ be the exponential generating function for the number of labeled rooted spanning trees in $K_n$:
\begin{align}\label{eq:egftree}
Y(x):=\sum_{n=1}^{\infty}n^{n-1}\frac{x^n}{n!}.
\end{align}
First we see the formula for the power of $Y(x)$. 
\begin{lem}\label{lem:polyegftree}
For $k=1,2,\dots$, 
\begin{align}\label{eq:polyegftree}
Y(x)^k=\sum_{n=1}^{\infty}k(n-1)(n-2)\cdots(n-(k-1))n^{n-k}\frac{x^n}{n!}.
\end{align}
\end{lem}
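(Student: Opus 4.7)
The plan is to prove Lemma~\ref{lem:polyegftree} by a direct application of the Lagrange inversion formula, using the functional equation for the rooted tree function.

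First I would record that $Y(x)$ as defined in \eqref{eq:egftree} is the classical Cayley rooted tree function, which satisfies the functional equation
\[
 Y(x) = x\, e^{Y(x)}.
\]
This follows from the exponential formula: a rooted labeled tree on $[n]$ is a root together with an unordered set of rooted subtrees attached to it. (Alternatively, one can derive it from \eqref{eq:0}: since $N_{\mathrm{bi}}(r,s,0)=r^{s-1}s^{r-1}$ and $Y$ enumerates rooted trees, this is the univariate specialization of the same exponential-formula reasoning that gives $T_x = xe^{T_y}$.)

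Next, having put $Y(x) = x\phi(Y(x))$ with $\phi(u) = e^u$, I would invoke the Lagrange inversion formula in the form
\[
 [x^n]\, Y(x)^k \;=\; \frac{k}{n}\,[u^{n-k}]\,\phi(u)^n \;=\; \frac{k}{n}\,[u^{n-k}]\,e^{nu} \;=\; \frac{k}{n}\cdot\frac{n^{n-k}}{(n-k)!}
\]
for every $1 \le k \le n$. Multiplying by $n!$ to convert to EGF coefficients gives
\[
 \langle x^n \rangle\, Y(x)^k \;=\; \frac{k\,(n-1)!}{(n-k)!}\, n^{n-k} \;=\; k(n-1)(n-2)\cdots(n-k+1)\,n^{n-k},
\]
which is precisely \eqref{eq:polyegftree}. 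The cases $n<k$ are trivially consistent, since the product $(n-1)\cdots(n-(k-1))$ then contains the factor $0$, matching the fact that $Y(x)^k$ has no term of degree less than $k$; the case $k=1$ returns $n^{n-1}$ as it should.

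There is no real obstacle: the proof is a one-line Lagrange inversion once the functional equation $Y=xe^Y$ is in hand. If one prefers to avoid Lagrange inversion, the same identity admits a combinatorial reading, namely that $\langle x^n \rangle Y(x)^k$ counts ordered $k$-tuples of rooted trees whose vertex sets partition $[n]$; the classical count $\binom{n}{k}k\,n^{n-k-1}$ of rooted forests on $[n]$ with $k$ components, multiplied by $k!$ to order the trees, yields the same closed form. Either route is short, and I would favor the Lagrange inversion presentation for brevity and for the uniformity it provides with the later coefficient extractions in Section~\ref{subsec:F2}.
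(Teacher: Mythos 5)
Your proof is correct, but it takes a genuinely different route from the paper's. The paper proves Lemma~\ref{lem:polyegftree} by induction on $k$: the inductive step convolves $Y(x)^k$ with $Y(x)$, rewrites the resulting coefficient as $kx^{k+1}\sum_n A_n(k,1;-1,-1)\frac{x^n}{n!}$, and then applies the Abel identity \eqref{eq:Anxy}, namely $A_n(x,y;-1,-1)=(x^{-1}+y^{-1})(x+y+n)^{n-1}$, to close the recursion. You instead go in one step via Lagrange inversion applied to $Y=xe^{Y}$, obtaining $[x^n]Y^k=\frac{k}{n}\frac{n^{n-k}}{(n-k)!}$ directly; the secondary combinatorial reading you sketch (ordered $k$-tuples of rooted trees partitioning $[n]$, i.e.\ $k!$ times the rooted-forest count $\binom{n}{k}kn^{n-k-1}$) also checks out and matches the target. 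What the paper's approach buys is self-containedness within the machinery it already needs elsewhere --- the same Abel polynomial $A_n(x,y;-1,-1)$ and the identity \eqref{eq:Anxy} are used in the proof of Proposition~\ref{prop:wnconv}, so there is no need to import Lagrange inversion. What your approach buys is brevity and a proof that is uniform in $k$ rather than inductive, making the closed form visibly a standard instance of the tree-function calculus. Both are sound; the choice is stylistic.
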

\begin{proof}
The proof is by induction in $k$. Assume that \eqref{eq:polyegftree} holds for $k$. Then,
\begin{align*}
Y(x)^{k+1}&=\left(\sum_{n=1}^{\infty}k(n-1)(n-2)\cdots(n-(k-1))n^{n-k}\frac{x^n}{n!}\right)\left(\sum_{n=1}^{\infty}n^{n-1}\frac{x^n}{n!}\right)\\
&=kx^{k+1}\left(\sum_{n=0}^{\infty}(n+k)^{n-1}\frac{x^n}{n!}\right)\left(\sum_{n=0}^{\infty}(n+1)^{n-1}\frac{x^n}{n!}\right)\\
&=kx^{k+1}\sum_{n=0}^{\infty}\left(\sum_{r=0}^n\binom{n}{r}(k+r)^{r-1}(1+n-r)^{n-r-1}\right)\frac{x^n}{n!}.
\end{align*}
Note that by \eqref{eq:Anxy}, 
\begin{align*}
\sum_{r=0}^n\binom{n}{r}(k+r)^{r-1}(1+n-r)^{n-r-1}&=A_{n}(k,1;-1,-1)\\
&=\Bigl(\frac{1}{k}+1\Bigr)(k+1+n)^{n-1},
\end{align*}
so that 
\begin{align*}
Y(x)^{k+1}&=x^{k+1}\sum_{n=0}^{\infty}(k+1)(n+k+1)^{n-1}\frac{x^n}{n!}\\
&=(k+1)\sum_{n=0}^{\infty}(n+1)(n+2)\cdots(n+k)(n+k+1)^{n}\frac{x^{n+k+1}}{(n+k+1)!}\\
&=(k+1)\sum_{n=k+1}^{\infty}(n-1)(n-2)\cdots(n-k)n^{n-(k+1)}\frac{x^n}{n!}.
\end{align*}
Hence, \eqref{eq:polyegftree} holds for $k+1$, and by
 induction this completes the proof.
\end{proof}

Lemma \ref{lem:polyegftree} gives for $a_k \in \R, k=1,2,\dots,$
\begin{align}\label{seriesofY}
\sum_{k=1}^{\infty}a_kY(x)^k=\sum_{n=1}^{\infty}n^{n-1}\left(\sum_{k=1}^{\infty}a_kk\frac{(n-1)(n-2)\cdots(n-(k-1))}{n^{k-1}}\right)\frac{x^n}{n!},
\end{align}
where the summation with respect to $k$ is finite.

From Corollary \ref{cor:wn}, \eqref{eq:diagonalZ} and \eqref{eq:polyegftree},
\begin{eqnarray}
\begin{aligned}\label{eq:diagonalZW}
&Z(x,x)=\sum_{n=1}^{\infty}2n^{n-1}\frac{x^n}{n!}=2Y(x),\\
&W(x,x)=\sum_{n=1}^{\infty}2(n-1)n^{n-2}\frac{x^n}{n!}=Y(x)^2.
\end{aligned}
\end{eqnarray}
Hence, we can express $F_2(x,x)$ by using only $Y(x)$, instead of $Z(x,x)$ and $W(x,x)$. Substituting \eqref{eq:diagonalZW} in \eqref{eq:v} with the notation $Y=Y(x)$, we have 
\begin{align}\label{eq:ExpandF2}
F_2(x,x)&=f_2(2Y,Y^2)
=\frac{Y^5(2+4Y-Y^2)}{12(1-Y)^3(1+Y)^2}\nonumber\\
&=\frac{Y^2-3Y-3}{12} 
- \frac{11}{64(1+Y)} + \frac{1}{32(1+Y)^2}
\nonumber\\ 
&\ \ \ + \frac{143}{192(1-Y)} - \frac{11}{24(1-Y)^2}+\frac{5}{48(1-Y)^3}.
\end{align}
In the case of $K_n$, a similar
expression can be found in \cite[(17)]{W77}. As we will see
below, the last term of \eqref{eq:ExpandF2} determines the
asymptotic behavior of $N_{\rm{bi}}(n,2)$ in
Theorem~\ref{thm:asympofF_2}. 

To obtain the asymptotic behavior of $N_{\rm{bi}}(n,2)$,
from \eqref{eq:ExpandF2}, we only need to estimate coefficients of $\frac{1}{(1-Y)^p}$, $\frac{1}{(1+Y)^p},\ p \in \N$. For fixed $p\in \N$, the tree polynomials $\{t_n(p)\}_{n\ge0}$ are defined by 
\begin{align}\label{eq:deftnp}
\frac{1}{(1-Y(x))^p}=\sum_{n=0}^{\infty}t_n(p)\frac{x^n}{n!}.
\end{align}
This polynomial and their asymptotic behavior of $t_n(p)$ are well studied in \cite{KP89}. 
\begin{lem}[\cite{KP89}]
For fixed $p \in \N$, as $n \to \infty$, 
\begin{align}\label{asympoftnp}
t_n(p)=\frac{\sqrt{2\pi}n^{n-1}}{2^{p/2}}\left(\frac{n^{(p+1)/2}}{\Gamma(p/2)}+\frac{\sqrt{2}p}{3}\frac{n^{p/2}}{\Gamma((p-1)/2)}+O(n^{(p-1)/2})+O(1)\right).
\end{align}
\end{lem}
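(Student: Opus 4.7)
The plan is to apply classical singularity analysis to the tree function $Y(x)$. The starting point is the functional equation $Y(x) = x e^{Y(x)}$, equivalently $x = Y e^{-Y}$. Since the map $y \mapsto y e^{-y}$ has a critical point at $y = 1$, its analytic inverse $Y(x)$ has square-root branching at $x = 1/e$, which a direct Lagrange-inversion computation refines into the Puiseux expansion
\begin{equation*}
Y(x) = 1 - \sqrt{2}\sqrt{1-ex} + \tfrac{2}{3}(1-ex) - \tfrac{\sqrt{2}}{18}(1-ex)^{3/2} + O\bigl((1-ex)^{2}\bigr),
\end{equation*}
valid in a $\Delta$-domain at $1/e$. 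Since $x=1/e$ is the unique dominant singularity on the circle of convergence, the Flajolet--Odlyzko transfer theorem is applicable term by term.

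Next I would convert this into a local expansion of $(1-Y(x))^{-p}$. Factoring out the square-root part gives
\begin{equation*}
1 - Y(x) = \sqrt{2(1-ex)}\,\Bigl(1 - \tfrac{1}{3}\sqrt{2(1-ex)} + O(1-ex)\Bigr),
\end{equation*}
and the binomial series applied to the analytic second factor yields
\begin{equation*}
(1 - Y(x))^{-p} = \bigl(2(1-ex)\bigr)^{-p/2}\Bigl(1 + \tfrac{p\sqrt{2}}{3}\sqrt{1-ex} + O(1-ex)\Bigr).
\end{equation*}

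Coefficient extraction is then routine. The transfer theorem delivers $[x^n](1-ex)^{-\alpha} = e^n n^{\alpha-1}\Gamma(\alpha)^{-1}(1 + O(1/n))$ for each Puiseux exponent $\alpha \notin \{0,-1,\ldots\}$, and Stirling's formula $n! \sim \sqrt{2\pi n}\,n^n e^{-n}$ converts $n!\,[x^n]$ of the first two displayed terms into exactly the two explicit summands in \eqref{asympoftnp}. The $O(n^{(p-1)/2})$ error absorbs both the $O(1/n)$ correction from the transfer theorem applied to the leading singularity $(1-ex)^{-p/2}$ and the coefficient contribution of the next Puiseux order $(1-ex)^{-(p/2-1)}$, while the $O(1)$ term handles boundary effects at small $p$ (in particular when $p/2 -1 \le 0$, where $\Gamma$ has a pole and the transfer theorem's constant vanishes).

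The principal obstacle is not any single algebraic manipulation but the careful control of uniform $\Delta$-domain estimates that permit term-by-term application of the transfer theorem to the Puiseux expansion, together with the bookkeeping that ensures the $O(1/n)$ corrections from both Stirling and the transfer theorem compound correctly into the stated error terms. This is exactly the pattern executed by Knuth and Pittel in \cite{KP89}, and since the present paper uses the lemma only as a black box to estimate coefficients of $(1-Y(x))^{-p}$ for small fixed $p$, no refinement of their analysis is needed here.
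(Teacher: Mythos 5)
The paper does not prove this lemma at all: it is imported verbatim from Knuth and Pittel \cite{KP89} and used as a black box for extracting coefficients of $(1-Y)^{-p}$, so there is no ``paper's proof'' to compare against. Your sketch is a correct reconstruction of the standard singularity-analysis argument that underlies the cited result. The functional equation $Y = xe^Y$, the square-root singularity at $x = 1/e$, the factoring $1-Y = \sqrt{2(1-ex)}\,(1 + \cdots)$, the binomial expansion of the analytic factor, and the term-by-term transfer plus Stirling are all in order, and your first two explicit terms land exactly on the two displayed summands. One small slip: the coefficient of $(1-ex)^{3/2}$ in the Puiseux expansion of $Y$ near $1/e$ is $-\tfrac{11\sqrt{2}}{36}$, not $-\tfrac{\sqrt{2}}{18}$; this does not affect anything here because that term only feeds into the $O(n^{(p-1)/2})$ error. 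Your remark about the $\Gamma$-pole at $p=1$ causing the second explicit term to vanish is also the right reading of the formula. In short, the proposal is sound and is essentially the proof that \cite{KP89} carries out; the present paper simply takes it on faith.
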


Hence, we have already obtained the asymptotic behavior of
$\frac{1}{(1-Y)^p}, p \in \N$. 
For $\frac{1}{(1+Y)^p}, p \in \N$, we only give a rough
estimate for coefficients of $\frac{1}{(1+Y)^p}$. By the
binomial expansion and \eqref{seriesofY}, we have 
\begin{align*}
\frac{1}{(1+Y(x))^p}&=\sum_{k=0}^{\infty}\binom{p+k-1}{k}(-1)^kY(x)^k\\
&=1+\sum_{n=1}^{\infty}\left(\frac{n^{n-1}}{\Gamma(p)}\sum_{k=0}^{\infty}\binom{n-1}{k}(-1)^{k+1}\frac{\Gamma(p+k+1)}{n^k}\right)\frac{x^n}{n!},
\end{align*}
so that as $n \to \infty$, 
\begin{align}\label{eq:orderof(1+Y)^p}
\bra x^n \ket \frac{1}{(1+Y(x))^p}&=\frac{n^{n-1}}{\Gamma(p)}\sum_{k=0}^{\infty}\binom{n-1}{k}(-1)^{k+1}\frac{\Gamma(p+k+1)}{n^k}\nonumber\\
&\le t_n(p)=O(n^{n+(p-1)/2}).
\end{align}

Now we are in a position to prove Theorem~\ref{thm:asympofF_2}.  
\begin{proof}[Proof of Theorem~\ref{thm:asympofF_2}]
By \eqref{eq:diagonalZW}, \eqref{eq:ExpandF2}, \eqref{asympoftnp} and
 \eqref{eq:orderof(1+Y)^p}, we obtain the leading asymptotic
 behavior of $N_{\rm{bi}}(n,2)$ as
\begin{align*}
N_{\rm{bi}}(n,2)= \bra x^n \ket F_2(x,x)
&=-\frac{1}{12}n^{n-1}+O(n^n)+O(n^{n+1/2})+ \frac{143}{192}(n^n+O(n^{n-1/2}))\\
&\ \ \  - \frac{11}{24}\Bigl(\sqrt{\frac{\pi}{2}}n^{n+1/2}+O(n^{n})\Bigr)+\frac{5}{48}(n^{n+1}+O(n^{n+1/2}))\\
&=\frac{5}{48}n^{n+1}+O(n^{n+1/2}),
\end{align*}
which completes the proof.
\end{proof}

\section{Another expression for $F_k(x,y)$}

In this section, we introduce the notion of basic graphs obtained from connected bipartite graphs, 
and we give proofs of Theorem~\ref{thm:expressionofF_k} and Theorem~\ref{thm:intro}. 
In a similar way to the proof of Theorem~\ref{thm:expressionofF_k}, 
we give another proof of Proposition~\ref{prop:Uintro}.

\subsection{Proof of Theorem~\ref{thm:expressionofF_k}}
Our proof is based on the combinatorial argument developed in \cite[Section 6]{W77}. 
Firstly, we explain how to obtain a basic graph from a connected bipartite graph. 

Fix $k \ge 2$ and take a labeled connected bipartite
 $(r,s,r+s-1+k)$-graph $G$ whose vertex set is $V=(V_1,V_2)$
 with $|V_1|=r$ and $|V_2|=s$. We delete a leaf and its adjacent edge from $G$, 
and repeat this procedure until vanishing all leaves in the resultant graph. 
Since we delete only one vertex and one edge in each
 procedure, we obtain a labeled connected bipartite $(t,u,t+u-1+k)$-graph without
 leaf for some $t \le r$ and $u \le s$. 
Clearly, the resultant graph does not depend on the
 order of eliminations of leaves, and it is denoted by $G'$. Let
 $V'=(V_1',V_2')$ be the vertex set of the graph $G'$. For each
 vertex $v \in V'$, we call it a \textit{special point}
 if $\deg(v)\ge 3$ and a \textit{normal point} if
 $\deg(v)=2$. 
Let $r_{{\rm sp}}$ and $s_{{\rm sp}}$ be the
 number of special points in $V_1'$ and $V_2'$,
 respectively. 
By applying the handshaking lemma to the graph $G'$, 
we see that $\sum_{v \in V'} ({\rm deg}(v) - 2) = 2(k-1)$ 
and hence 
\begin{align}\label{eq:numofsppoints}
r_{{\rm sp}}+s_{{\rm sp}}\le 2(k-1).
\end{align}
In the graph $G'$, a path whose end vertices are 
 distinct special points is said to be a \textit{special path} 
and a cycle which contains exactly one special point is said to be 
a \textit{special cycle}. 
Since $G'$ is connected and $\deg(v) \ge 2$, it is clear
 that it consists of such
 special paths and cycles which are disjoint except at special points. 
We classify these special paths and cycles into seven types 
 and contract them to the minimal ones as in
 Figure~\ref{fig:specialpaths} to obtain the \textit{basic graph} $\cB(G)$. 

\begin{itemize}
 \item 
An $\alpha_i$\textit{-cycle} is a special cycle with exactly one special
point in $V_i'$ ($i=1,2$). 
By the structure of bipartite graphs, these special cycles 
contain at least three normal points. The minimal $\alpha_i$-cycle has three normal points 
as in Figure~\ref{fig:specialpaths}. 
\item A $\beta_j$\textit{-path} is a special path whose end vertices 
are two distinct special points in $V_j'$ ($j=1,2$). 
By the structure of bipartite graphs, these special paths 
contain at least one normal point. The minimal $\beta_j$-path has only one normal point as in Figure~\ref{fig:specialpaths}. 
\item A special path whose end vertices are special points 
in $V_1'$ and $V_2'$ is called in several ways according to the situation. 
For each pair of special points $v_1 \in V_1'$ and $v_2 \in V_2'$, we have two cases. 
\begin{itemize}
 \item Case(i) there is only one special path connecting $v_1$ and $v_2$: 
such a special path is called a $\gamma$\textit{-path}.  
The length of the minimal $\gamma$-path is one. 
\item Case(ii) there is more than one special path connecting $v_1$ and $v_2$: 
since we are considering a simple graph, there is at most one such a special path of length one, 
i.e., joined by an edge. A special path is called 
a $\delta$\textit{-path} if the length is three or more 
and a $\delta$\textit{-edge} if the length is one. 
The length of the minimal $\delta$-path is three. 
\end{itemize}
\end{itemize}

We decomposed $G'$ into the union of a collection
 of $\alpha_i$-cycles, $\beta_j$-paths, $\gamma$-paths, $\delta$-paths,
 and $\delta$-edges. The \textit{basic graph} $\cB(G)$ is obtained from $G'$ by contracting 
$\alpha_i$-cycles, $\beta_j$-paths, $\gamma$-paths, and $\delta$-paths to
 the minimal ones as in Figure~\ref{fig:specialpaths}. 
In the procedure of contraction, we forget about
 labels of vertices. 
We summarize the contraction procedures below. 
\begin{itemize}
 \item If each $\alpha_i$-cycle ($i=1,2$) contains five or more normal points, we
contract it to the minimal $\alpha_i$-cycle, which has three normal points. 
 \item If each $\beta_j$-path ($j=1,2$) contains three or more normal points, we
contract it to the minimal $\beta_j$-path, which has only one normal point. 
 \item If each $\gamma$-path contains normal points, we
       contract it to the minimal $\gamma$-path, which has no normal points. 
 \item If each $\delta$-path contains four or more normal points, we
contract it to the minimal $\delta$-path, which has two normal points. 
\end{itemize}

\begin{figure}[htbp]
\begin{center}
\includegraphics[width=0.8\hsize]{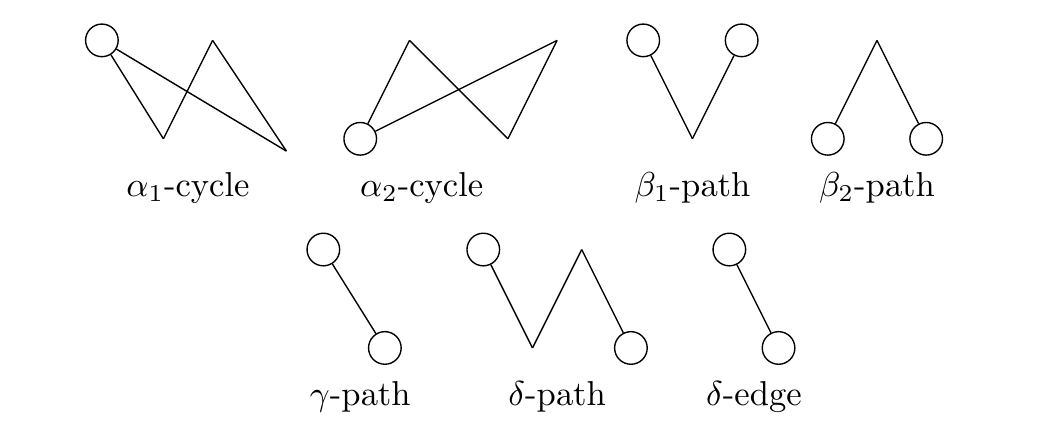}
\end{center}
\caption{Seven types of minimal special paths, cycles and edge. The circles denote special points. }
\label{fig:specialpaths}
\end{figure}

We have seen how to make the basic graph $\cB(G)$
from a given labeled connected bipartite $(r,s,r+s-1+k)$-graph $G$.  
Note that the number of cycles in graphs is invariant by the contractions, so that $\cB(G)$
has just $k$ cycles. 
We will reconstruct labeled bipartite graphs from each basic graph $\cB$ 
and introduce $J_{\cB}(x,y)$ to express $F_k(x,y)$ as sum of $J_{\cB}(x,y)$'s. 

\begin{proof}[Proof of Theorem~\ref{thm:expressionofF_k}]
For a given labeled connected bipartite $(r,s,r+s-1+k)$-graph $G$, 
let $V''=(V_1'',V_2'')$ be the vertex set of $\cB(G)$, 
and also let 
$a_i, b_j, c, d$ and $e$ be the number of
 $\alpha_i$-cycles, $\beta_j$-paths, $\gamma$-paths, $\delta$-paths, and $\delta$-edges in $\cB(G)$, respectively. 
Then, for the number of vertices in $\cB(G)$, we have
\begin{align}\label{eq:numofV_1''}
|V_1''|&=r_{\rm sp}+a_1+2a_2+b_2+d \le t \le r,\\ 
\label{eq:numofV_2''}
|V_2''|&=s_{\rm sp}+2a_1+a_2+b_1+d \le u \le s.
\end{align}
For the number of edges in $\cB(G)$, 
since the same number of vertices and edges are deleted by contraction, we have
\begin{equation}
4a_1+4a_2+2b_1+2b_2+c+3d+e=|V_1''|+|V_2''|+k-1. 
\label{eq:numofE''}
\end{equation}
Combining \eqref{eq:numofV_1''}-\eqref{eq:numofE''} 
and the inequality \eqref{eq:numofsppoints}, we have
\begin{align}\label{eq:a_1tod}
a_1+a_2+b_1+b_2+c+d+e&=r_{\rm sp}+s_{\rm sp}+k-1 \nonumber \\
&\le 3(k-1).
\end{align}
Therefore, if $G$ is a labeled connected bipartite
 $(r,s,r+s-1+k)$-graph, then $\cB(G)$ should satisfy the
 conditions \eqref{eq:numofsppoints}-\eqref{eq:a_1tod}. 
Now we denote the set of all possible basic graphs having
 $k$ cycles by $BG_k$,
 i.e., 
\[
 BG_k := \{\cB(G) : \text{$G$ is a labeled bipartite
 $(r,s,r+s-1+k)$-graph for some $r,s$}. \}
\] 
It follows from \eqref{eq:a_1tod} that $BG_k$ is a
 \textit{finite} set. 

For fixed $\cB \in BG_k$, 
let $j_{\cB}(r,s)$ be the number of labeled connected bipartite
 $(r,s,r+s-1+k)$-graphs $G$ such that $\cB(G) = \cB$. 
We define the exponential generating function of
 $j_{\cB}(r,s)$ as 
\begin{align*}
J_{\cB}=J_{\cB}(x,y):=\sum_{r,s=0}^{\infty}j_{\cB}(r,s)\frac{x^ry^s}{r!s!}.
\end{align*}
We will show below that $J_{\cB}(x,y)$ is expressed by a
 rational function of $T_x$ and $T_y$. 
To this end, we count $j_{\cB}(r,s)$ by reversing the
 procedure of contraction above, i.e., by adding pairs of a normal point and its
 adjacent edge in $\cB$ and rearranging labels of $(r,s)$
 vertices. 
We construct bipartite $(r,s,r+s-1+k)$-graphs from $\cB$ by two steps as
 follows. \\ 
Step 1: 
Take $\cB \in BG_k$. Let $V'' = (V_1'', V_2'')$ be the vertex set of $\cB$ and 
$M:=a_1+a_2+b_1+b_2+c+d$ 
be the number
 of all minimal special paths and cycles in $\cB$ except $\delta$-edges. 
 Take $t$ and $u$ such that $|V_1''| \le t \le r$ and
 $|V_2''| \le u \le s$. 
We label all minimal $\alpha_i$-cycles, $\beta_j$-paths, $\gamma$-paths and 
 $\delta$-paths in $\cB$, say, $\ss_1,\ss_2,\dots,\ss_M$,
 and we add pairs of a normal point and its adjacent edge in these special paths/cycles. 
By the structure of bipartite graphs, for every $j=1,2,\dots, M$, the number of added pairs in
 each $\ss_j$ is even, and the numbers of added normal
 points in $V_1''$ and $V_2''$ are equal, which we denote by $m_j$. 
Hence, a necessary condition for the numbers of added vertices in $V_1''$ and
 $V_2''$ is $t-|V_1''|=u-|V_2''| =\sum_{j=1}^M m_j$. 
Combining \eqref{eq:numofV_1''} and \eqref{eq:numofV_2''} with the
 necessary condition, the non-negative integers $\{m_j\}_{j=1}^M$ satisfy 
\begin{align}
m_1+m_2+\cdots +m_M &= t-(r_{\rm sp}+a_1+2a_2+b_2+d),
\label{eq:addingptsinV_1''}\\
m_1+m_2+\cdots +m_M &= u-(s_{\rm sp}+2a_1+a_2+b_1+d).
\label{eq:addingptsinV_2''}
\end{align}
Let $y_{\cB}(t,u)=y_{\cB}(t,u,r_{{\rm sp}},s_{{\rm sp}},a_1,a_2,b_1,b_2,c,d)$ 
be the number of the solutions $\{m_j\}_{j=1}^M$ of \eqref{eq:addingptsinV_1''} and
 \eqref{eq:addingptsinV_2''}. 
For each solution $\{m_j\}_{j=1}^M$, we obtain 
an unlabeled connected bipartite $(t,u,t+u-1+k)$-graph, 
and hence $y_{\cB}(t,u)$ of those from $\cB$. \\
Step 2: 
Take one of $y_{\cB}(t,u)$ of unlabeled connected bipartite
 $(t,u,t+u-1+k)$-graphs and call its vertices $T_1,\dots, T_t$ and $U_1,\dots, U_u$. 
Let $\cI_{t,u}$ the set of $\{(r_{1i}, s_{1i})\}_{i=1}^t$
 and $\{(r_{2j}, s_{2j})\}_{j=1}^u$ such that 
$r_{1i}\ge1, r_{2j}\ge0, s_{1i}\ge0, s_{2j}\ge 1$,
$\sum\limits_{i=1}^tr_{1i}+\sum\limits_{j=1}^ur_{2j}=r$ and
$\sum\limits_{i=1}^ts_{1i}+\sum\limits_{j=1}^us_{2j}=s$. 
For each $\{(r_{1i}, s_{1i})\}_{i=1}^t$
 and $\{(r_{2j}, s_{2j})\}_{j=1}^u$ in $\cI_{t,u}$, 
we attach a rooted tree of size $(r_{1i}, s_{1i})$ to
 $T_i$ for $i=1,2,\dots,t$ and a rooted tree of size
 $(r_{2j}, s_{2j})$ to $U_j$ for $j=1,2,\dots,u$,
 respectively.  
Let $N(r,s,t,u)$ be the number of these
 bipartite $(r,s,r+s-1+k)$-graphs. Then, by counting $t$
 rooted trees whose roots are in $V_1$ and $u$ rooted trees
 whose roots are in $V_2$, we have 
\begin{align}\label{eq:phi}
N(r,s,t,u)
&=\sum\nolimits'\binom{r}{r_{11},\dots,r_{1t},r_{21},\dots,r_{2u}}\binom{s}{s_{11},\dots,s_{1t},s_{21},\dots,s_{2u}}
 \nonumber \\
&\quad \times 
\prod_{i=1}^tr_{1i}^{s_{1i}}s_{1i}^{r_{1i}-1}\prod_{j=1}^ur_{2j}^{s_{2j}-1}s_{2j}^{r_{2j}},
\end{align}
where the summation $\sum\nolimits'$ is taken over the set $\cI_{t,u}$. \\
By the above two steps, we obtain all labeled connected bipartite $(r,s,r+s-1+k)$-graphs from
the basic graph $\cB$. However, not all of them are different
because of forgetting labels $\ss_1, \dots, \ss_M$ after attaching labeled rooted trees to all vertices. 
Indeed, if $g_{\cB}$ is the number of automorphisms of
 $\cB$, then every graph appears exactly $g_{\cB}$ times. 
Hence, we have
\begin{align*}
j_{\cB}(r,s)=\sum_{\substack{|V_1''| \le t \le r \\ |V_2''| \le u \le s}}\frac{y_{\cB}(t,u)N(r,s,t,u)}{g_{\cB}}.
\end{align*}
Using this, we have
\begin{align}\label{eq:J_B(x,y)}
J_{\cB}(x,y)=\frac{1}{g_{\cB}}\sum_{\substack{|V_1''|\le t\\ |V_2''|\le u}}y_{\cB}(t,u)\sum_{\substack{t\le r \\ u \le s}}N(r,s,t,u)\frac{x^ry^s}{r!s!}.
\end{align}
For the summation in $r$ and $s$, by \eqref{eq:phi}, we have
\begin{align*}
\sum_{\substack{t\le r\\ u \le s}}N(r,s,t,u)\frac{x^ry^s}{r!s!}
&=\sum_{\substack{t\le r\\ u \le s}}\sum\nolimits'\prod_{i=1}^tr_{1i}^{s_{1i}}s_{1i}^{r_{1i}-1}\frac{x^{r_{1i}}y^{s_{1i}}}{r_{1i}!s_{1i}!}\prod_{j=1}^ur_{2j}^{s_{2j}-1}s_{2j}^{r_{2j}}\frac{x^{r_{2j}}y^{s_{2j}}}{r_{2j}!s_{2j}!}\nonumber\\
&=T_x^tT_y^u.
\end{align*}
On the other hand, by a straightforward calculation, we have
\begin{align}\label{eq:sumofy_{B}(t,u)T_x^tT_y^u}
\sum_{\substack{|V_1''|\le t\\ |V_2''|\le
 u}}y_{\cB}(t,u)T_x^tT_y^u 
&=\sum_{\substack{|V_1''|\le t\\ |V_2''|\le u}}\ \sum_{\substack{m_1,\dots,m_M\\
\sum m_j=t-|V_1''|=u-|V_2''|}}T_x^tT_y^u\nonumber\\
&=T_x^{|V_1''|}T_y^{|V_2''|}\sum_{\substack{|V_1''|\le t\\
 |V_2''|\le u}}\ \sum_{\substack{m_1,\dots, m_M\nonumber\\
\sum m_j=t-|V_1''|=u-|V_2''|}}(T_xT_y)^{\sum_{j=1}^{M} m_j}\\
&=T_x^{|V_1''|}T_y^{|V_2''|}\sum_{n\ge0}\ \sum_{\substack{m_1,\dots,m_M\\
\sum m_j=n}}(T_xT_y)^{\sum_{j=1}^{M} m_j}\nonumber\\
&=T_x^{|V_1''|}T_y^{|V_2''|}\prod_{j=1}^{M}\Bigl(\sum_{m_j\ge0}(T_xT_y)^{m_j}\Bigr)\nonumber\\
&=T_x^{|V_1''|}T_y^{|V_2''|}(1-T_xT_y)^{-M}.
\end{align}
Combining \eqref{eq:numofV_1''}, \eqref{eq:numofV_2''}, \eqref{eq:a_1tod}, \eqref{eq:J_B(x,y)} and \eqref{eq:sumofy_{B}(t,u)T_x^tT_y^u}, we obtain \eqref{eq:J_B}. Since non-isomorphic basic graphs with $k$ cycles lead non-isomorphic labeled connected bipartite $(r,s,r+s-1+k)$-graphs, taking a summation $J_{\cB}$ with respect to $\cB \in BG_k$, we obtain \eqref{eq:expressionofF_k}, which completes the proof.
\end{proof}
We give an example of Theorem~\ref{thm:expressionofF_k} for $k=2$. 
\begin{ex}[$k=2$]
Let us consider all the basic graphs for $k=2$ and compute $F_2(x,y)$.
From the conditions \eqref{eq:numofsppoints} and \eqref{eq:a_1tod}, we have 
\begin{align*} 
& r_{\mathrm{sp}} + s_{\mathrm{sp}} \leq 2, \\
& a_{1}+a_{2}+b_{1}+b_{2}+c+d+e =r_{\mathrm{sp}}+s_{\mathrm{sp}}+1. 
\end{align*}
As a result, the possible combinations of 
numbers of special points are 
$(r_{\mathrm{sp}},s_{\mathrm{sp}})=(1,0),(0,1),(1,1),(2,0),(0,2)$.
We compute $J_{\mathcal{B}}$ for each of these cases.
For instance, the calculation procedure is described below for the case of $(r_{\mathrm{sp}},s_{\mathrm{sp}})=(1,0)$.
First, consider the numbers 
of cycles, paths and edges that make up the basic graphs. 
The following should be obvious.
By using 
\begin{align*} 
a_{1}+a_{2}+b_{1}+b_{2}+c+d+e=2,
\end{align*}
we have $(a_1,a_2,b_1,b_2,c,d,e)=(2,0,0,0,0,0,0)$.
As a result, the basic graph is a combination of two $\alpha_1$-cycles.
This is the upper left graph in Figure~\ref{fig:basic_graphs}.
We define this basic graph as $\cB_1$.
Note that basic graphs are unlabeled.

Next, let us compute the number of graph automorphism $g_{\cB_1}$.
We label each of the vertices appropriately.
For the labeled basic graph, there are $2!$ ways to arrange the two $\alpha_1$-cycles.
There are two possible ways to label the vertices of each $\alpha_1$-cycle: 
$1 \to 2 \to 3 \to 4 \to 1$ with the special point as 1, or in reverse $1 \to 4 \to 3 \to 2 \to 1$. 
Therefore $g_{\cB_1}=2!\times 2^2 = 8$.
Consequently, from \eqref{eq:J_B} we obtain 
\begin{align*} 
J_{\cB_1}(x,y)=\frac{T_x^3T_y^4}{8(1 - T_xT_y)^2}.
\end{align*}
We can derive the others by the same calculation.
Therefore,
\begin{align*}
\sum_{\mathcal{B} \in B G_{2}} J_{\mathcal{B}}(x, y) =&\frac{T_x^3T_y^4}{8(1 - T_xT_y)^2} + \frac{T_x^4T_y^3}{8(1 - T_xT_y)^2} + \frac{T_x^4T_y^5}{8(1-T_x T_y)^3} \\
&   +\frac{T_x^2T_y^3}{12(1-T_x T_y)^3} + \frac{T_x^5T_y^4}{8(1-T_x T_y)^3} + \frac{T_x^3T_y^2}{12(1-T_x T_y)^3}\\
            &+\frac{T_x^4T_y^4}{6(1-T_x T_y)^3}+\frac{T_x^3T_y^3}{2(1-T_x T_y)^2} + \frac{T_x^4T_y^4}{4(1-T_x T_y)^3}\\
           =&\frac{W^{2}(2+3 W)}{24(1-W)^{3}} Z+\frac{W^{3}(6-W)}{12(1-W)^{3}} \\
           =&F_{2}(x, y),
\end{align*}
where the nine terms correspond to the nine basic graphs in Figure~\ref{fig:basic_graphs}, respectively. 
Hence, the result of the calculation by using basic graphs is consistent with $F_{2}(x,y)=f_2(Z,W)$. 

\begin{figure}[htbp]
\begin{center}
\includegraphics[width=0.8\hsize]{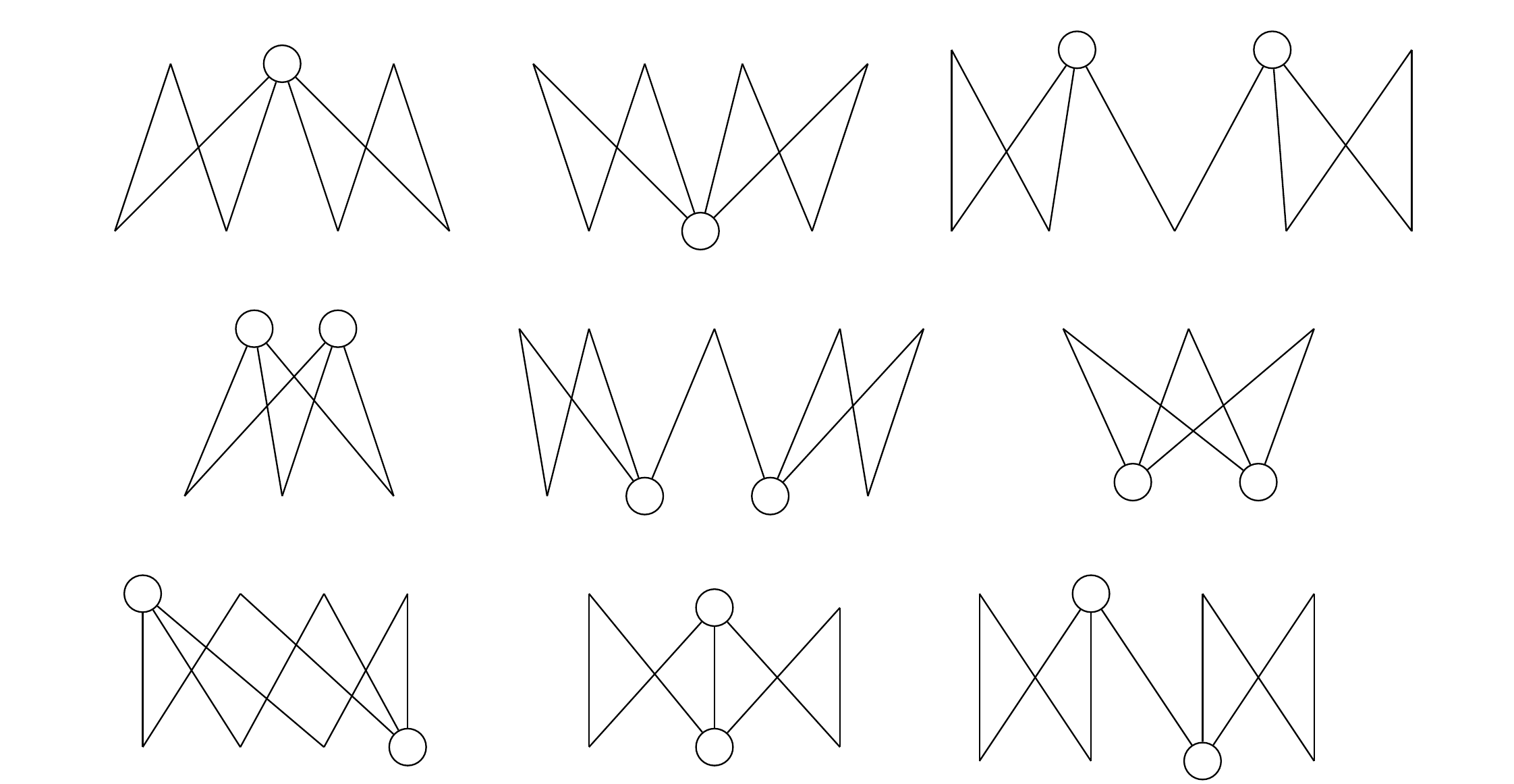}
\end{center}
\caption{Basic graphs for $k=2$}
\label{fig:basic_graphs}
\end{figure}
\end{ex}

\subsection{Proof of Theorem~\ref{thm:intro}}

From \eqref{eq:expressionofF_k}, \eqref{eq:J_B} and
 \eqref{eq:a_1tod}, we see that 
\begin{equation}
 F_k(x,y) 
= \frac{1}{(1-T_xT_y)^{3(k-1)}} 
\sum_{\cB \in BG_k} \frac{1}{g_{\cB}} T_x^{r_{\cB}}
T_y^{s_{\cB}} (1-T_xT_y)^{p_{\cB}}, 
\label{eq:poly}
\end{equation}
where $r_{\cB} = r_{{\rm sp}}+a_1+2a_2+b_2+d$, 
$s_{\cB} = s_{{\rm sp}}+2a_1+a_2+b_1+d$, and 
\begin{align}
p_{\cB} &= 3(k-1) - (a_1+a_2+b_1+b_2+c+d) \nonumber\\
&= 2(k-1) - (r_{{\rm sp}} + s_{{\rm sp}}) + e \nonumber\\
&= \sum_{v \in \text{special points}} (\deg(v) -3) + e\nonumber\\
&\ge 0.
\label{eq:p_B} 
\end{align}
Note that there are some basic graphs $\cB \in BG_k$ such that $p_{\cB}=0$. 
For example,
we can construct a basic graph $\tilde{\cB}$ with $r_{{\rm sp}}=2(k-1)$, $a_1=2$, $b_1=3k-5$ and other constants vanishing as follows: 
we label all $2(k-1)$ special points in $V_1$, say, $r_1, r_2, \dots, r_{2(k-1)}$. We attach an $\alpha_1$-cycle to each of $r_1$ and $r_{2(k-1)}$, 
and then connect $r_{2j-1}$ with $r_{2j}$ ($j=1,2,\dots,k-1$) by a $\beta_1$-path 
and $r_{2j}$ with $r_{2j+1}$ ($j=1,2,\dots,k-2$) by two $\beta_1$-paths. 
Then, we obtain $\tilde{\cB}$. 
Remark that in the case of $k=2$, $\tilde{\cB}$ corresponds to the top-right graph in Figure~\ref{fig:basic_graphs}. 
Clearly, $s_{{\rm sp}}=e=0$ holds for all $k\ge2$, and the calculation in \eqref{eq:p_B} gives $p_{\tilde{\cB}}=0$.
From this observation, the numerator of the right-hand side of \eqref{eq:poly} turns out to be a polynomial of the following form 
\[
Q(x,y) = \sum_{i=1}^m C_i x^{a_i} y^{b_i} +\sum_{i=m+1}^{m+n} C_i x^{a_i} y^{b_i} (1- xy)^{p_i},
\]
for some positive integers $m$ and $n$. Here $a_i, b_i$ are non-negative integers, $p_i$ is a positive integer 
and $C_i > 0$ for all $i$.
If $Q(x,y)$ has a factor $1-xy$, 
plugging $y=x^{-1}$ in both sides yields $0=\sum_{i=1}^m C_i > 0$, which is a contradiction. Hence,  
 $Q(x,y)$ does
 not have the factor $1-xy$, which implies that 
the numerator of the right-hand side of \eqref{eq:poly} does not have a factor $1-T_xT_y$. 

We will prove  \eqref{eq:generalfk}. 
Assume $\cB=(V,E)$ with bipartition $V=V_1 \sqcup V_2$. 
We have the following lemma.
\begin{lem}\label{lemma:difdeg}
For any $\cB \in BG_k$, $\big| |V_1|-|V_2| \big| \le k-1$.
\end{lem}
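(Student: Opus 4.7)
The plan is to combine the handshaking lemma with the bipartiteness of $\cB$, using the crucial observation that every vertex of $\cB$ has degree at least $2$. Indeed, every normal point of $\cB$ has degree exactly $2$ and every special point has degree at least $3$, so the quantity $\deg_{\cB}(v)-2$ is non-negative for all $v \in V_1 \cup V_2$.

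First, I would set
\[
E_i := \sum_{v \in V_i}\bigl(\deg_{\cB}(v)-2\bigr) \ge 0 \qquad (i=1,2).
\]
Using $\sum_{v \in V}\deg_{\cB}(v) = 2|E(\cB)|$ together with the Betti number identity $|E(\cB)| = |V_1|+|V_2|+k-1$ (which is valid because the contraction procedure producing $\cB$ from $G'$ preserves the cycle rank, and $G'$ inherits $k$ independent cycles from $G$), I obtain
\[
E_1 + E_2 \;=\; 2\bigl(|E(\cB)|-|V_1|-|V_2|\bigr) \;=\; 2(k-1).
\]

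Second, bipartiteness gives $\sum_{v \in V_1}\deg_{\cB}(v) = |E(\cB)| = \sum_{v \in V_2}\deg_{\cB}(v)$, which rearranges to
\[
E_1 - E_2 \;=\; 2\bigl(|V_2|-|V_1|\bigr).
\]
Third, since $E_1,E_2 \ge 0$ and $E_1+E_2 = 2(k-1)$, each $E_i$ lies in the interval $[0,2(k-1)]$, so $|E_1-E_2| \le E_1+E_2 = 2(k-1)$. Combining this with the previous display yields $\big||V_1|-|V_2|\big| \le k-1$, as required.

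The only point demanding care—rather than a real obstacle—is verifying that $\cB$ has no vertices of degree $\le 1$, so that the non-negativity $E_i \ge 0$ actually holds. This follows directly from the construction: $G'$ is produced by iteratively removing leaves from $G$, hence $G'$ has minimum degree $\ge 2$; the subsequent contractions of $\alpha_i$-cycles, $\beta_j$-paths, $\gamma$-paths, and $\delta$-paths to their minimal representatives only replace internal chains of normal points by shorter chains, and thus do not alter the degree of any surviving vertex.
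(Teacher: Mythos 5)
Your proof is correct and follows essentially the same route as the paper: both arguments rest on the minimum degree $\ge 2$, the bipartite identity $|E| = \sum_{v\in V_1}\deg(v) = \sum_{v\in V_2}\deg(v)$, and the Betti-number relation $|E|-|V|=k-1$; the paper packages these as $|E|-2\max(|V_1|,|V_2|)\ge 0$, whereas you use the bookkeeping quantities $E_1,E_2$ and the inequality $|E_1-E_2|\le E_1+E_2$, but the content is identical.
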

\begin{proof}
Since $\deg(v) \ge 2$ for every vertex $v$ in a basic graph $\cB$, we see that 
\[
 |E| - |V| - \big| |V_1|-|V_2| \big|
= |E| - 2 \max(|V_1|, |V_2|) \ge 0.  
\]
On the other hand, $|E|-|V|=k-1$ since $\cB$ is connected and $k$ is the Betti number. Therefore, 
$\big| |V_1|-|V_2| \big| \le k-1$. 
\end{proof}
For $\cB=(V_1 \sqcup V_2,E) \in BG_k$, 
there exists a unique basic graph $\cB'=(V_1' \sqcup V_2',E') \in BG_k$ such that 
$V_1'=V_2, V_2'=V_1 $, and $E'=E$. 
Let $\pi: BG_k \to BG_k$ be a mapping defined by $\pi(\cB)=\cB'$.
Then $\pi $ is an involution, and we have
\begin{align}
g_{\pi(\cB)}=g_{\cB},
\quad r_{\pi(\cB)}=s_{\cB},
\quad s_{\pi(\cB)}=r_{\cB},
\quad p_{\pi(\cB)}=p_{\cB},
\label{eq:invo}
\end{align}
where $r_\cB=|V_1|$, $s_\cB=|V_2|$.
From this involution with \eqref{eq:invo}, 
the numerator of \eqref{eq:poly} turns out to be
\begin{align}
&\sum_{\cB \in BG_k} \frac{1}{g_{\cB}} T_x^{r_{\cB}}
T_y^{s_{\cB}} (1-T_xT_y)^{p_{\cB}}\nonumber\\
&=\sum_{\substack{\cB \in BG_k \\ r_{\cB}>s_{\cB}}}\frac{(T_xT_y)^{s_{\cB}}}{g_{\cB}} 
(T_x^{q_{\cB}}+T_y^{q_{\cB}})
 (1-T_xT_y)^{p_{\cB}}
+\sum_{\substack{\cB \in BG_k \\ r_{\cB}=s_{\cB}}}\frac{(T_xT_y)^{r_{\cB}}}{g_{\cB}} (1-T_xT_y)^{p_{\cB}}, 
\label{eq:poly2}
\end{align}
where $q_{\cB} = r_{\cB} - s_{\cB}$. Since $T_x^{q_{\cB}}+T_y^{q_{\cB}}$ is a polynomial of $Z$ of degree $q_{\cB}$ with coefficients being polynomials of $W$, 
so is the right-hand side of \eqref{eq:poly2} but the degree is equal to 
$\max_{\cB \in BG_k, r_{\cB} > s_{\cB}} q_{\cB}$. 
Now we consider a basic graph $\hat{\cB}$ which has one special point in $V_2$ and $k$ $\alpha_2$-cycles. Clearly, $r_{\hat{\cB}}=2k, s_{\hat{\cB}}=k+1$ hold, and hence $q_{\hat{\cB}}  =k-1$. This together with Lemma~\ref{lemma:difdeg} implies $\max_{\cB \in BG_k, r_{\cB} > s_{\cB}} q_{\cB}  = k-1$. 
Since $\cB$ is a simple graph and has at least one cycle, 
we have $r_{\cB}, s_{\cB}\ge 2$.
Then, the right hand side of \eqref{eq:poly2} has a factor $(T_xT_y)^2=W^2$. 
Thus the proof of \eqref{eq:generalfk} is completed. 

\subsection{Combinatorial proof of Proposition \ref{prop:Uintro}}

Finally, we remark on another proof of Proposition 
\ref{prop:Uintro} using a similar argument in the proof of 
Theorem~\ref{thm:expressionofF_k}, which
is a bipartite version of the combinatorial argument discussed in
\cite[Section 5]{W77}. 
We use the same notation as above. 
In the preliminary step, we delete leaves and adjacent edges repeatedly. 
In this case, by this procedure, 
we obtain the unique cycle of length, say $2t$. 
Let $r,s \ge 2$ be fixed and $V''
= (V_1'', V_2'')$ be a vertex set. Take $t$ such that $2 \le
t \le \min\{r,s\}$, and consider an unlabeled bipartite
unicyclic graph whose length of the cycle is $2t$. Clearly,
$|V_1''|=|V_2''|=t$. For each of vertices of this graph, we
attach a rooted tree in a similar way of Step 2 in the proof
of Theorem \ref{thm:expressionofF_k}. To create $2t$ rooted
trees, we partition $(r,s)$ vertices into $2t$ vertex sets,
and all of these partitions are in $\cI_{t,t}$. By this
procedure, we obtain $N(r,s,t,t)$ labeled connected
bipartite $(r,s,r+s)$-graphs, where $N(r,s,t,u)$ is
defined in \eqref{eq:phi}. For each of the obtained graphs,
there are $2t$ automorphisms due to the cycle and labels of
roots of rooted trees. Let $j(r,s)$ be the number of labeled
connected bipartite $(r,s,r+s)$-graphs. Then, we have 
\begin{align*}
j(r,s)=\sum_{2 \le t \le \min\{r,s\}}\frac{N(r,s,t,t)}{2t}.
\end{align*}
Let $J(x,y)$ be the exponential generating function for $j(r,s)$, and we have
\begin{align*}
J(x,y)&=\sum_{r,s=0}^{\infty}j(r,s)\frac{x^ry^s}{r!s!}
=\sum_{t=2}^{\infty}\frac{1}{2t}\sum_{\substack{t \le r \\ t\le s}}\frac{N(r,s,t,t)}{r!s!}x^ry^s\\
&=\frac{1}{2}\sum_{t=2}^{\infty}\frac{(T_xT_y)^t}{t}
=-\frac{1}{2}(\log(1-T_xT_y)+T_xT_y)=F_1(x,y),
\end{align*}
which completes the combinatorial proof of Proposition \ref{prop:Uintro}.

\section{Proof of Theorem~\ref{thm:generalfk}}\label{sec:asymp_equality}

In this section, we prove the asymptotic equality \eqref{eq:f(n,n+k)asmthm} for $k \ge 2$. 
In Subsection \ref{subsec:6.1}, for each basic graph $\cB
 \in BG_k$, we find the leading term of $J_{\cB}(x,x)$ by a combinatorial argument, where the multigraph $\cB^*$ obtained from $\cB$ by contraction plays an important role. 
In Subsection \ref{subsec:6.2}, we introduce the basic
 graphs on complete graphs as discussed in \cite{W77} and
 give a similar discussion in Subsection~\ref{subsec:6.1}, and in Subsection
 \ref{subsec:6.3}, we derive the leading asymptotic behavior of
$N_{\rm{bi}}(n,k)$ defined by \eqref{eq:defN_{bi}(n,k)}. 
Through the existence of multigraphs, 
we will see the correspondence between basic graphs on complete bipartite graphs and those on complete graphs. 

\subsection{Basic graphs $\cB$ and $J_{\cB}(x,x)$}\label{subsec:6.1}
Let us recall again $Y(x)$ in \eqref{eq:egftree} representing exponential generating function for labeled rooted trees.
From \eqref{eq:diagonalZW}, $Z(x,x)=2Y(x)$ and $W(x,x)=Y(x)^2$. Recall that
$$
Z(x,y)=T_x(x,y)+T_y(x,y),\ \ W(x,y)=T_x(x,y)T_y(x,y).
$$
Solving these equations, we have
\begin{align}
T_x(x,x)=T_y(x,x)=Y(x).
\label{eq:T_xT_yY}
\end{align}
From Theorem~\ref{thm:expressionofF_k}, for $k\ge2$ we have
\begin{align*}
F_k(x,x)=\sum_{\cB \in BG_k} J_{\cB}(x,x),
\end{align*}
where
\begin{equation}
J_{\cB}(x,x)=\dfrac{Y^{L}}{g_{\cB}(1-Y^2)^{M}}, 
\label{eq:J_Bdiag}
\end{equation}
with $M=M(\cB):=a_1+a_2+b_1+b_2+c+d$ and $L=L(\cB):=M+r_{{\rm sp}}+s_{{\rm sp}}+2a_1+2a_2-c+d$. 
These constants are determined by $\cB$. In this section, we also use the notation $a_1=a_1(\cB), r_{{\rm sp}}=r_{{\rm sp}}(\cB)$, and so on.
We easily see the following. 
\begin{lem} \label{lemma:decomJB}
For $\cB \in BG_k$, there exist unique constants 
$\{\fa_i(\cB)\}_{i=1}^M, \{\fb_i(\cB)\}_{i=1}^M, \{\fc_j(\cB)\}_{j=0}^{L-2M}$ 
such that
\begin{equation}
J_{\cB}(x,x)
=\sum_{i=1}^M \left(\frac{\fa_i(\cB)}{(1-Y)^i}+\frac{\fb_i(\cB)}{(1+Y)^i}\right)+\sum_{j=0}^{L-2M}\fc_j(\cB)Y^j.
\label{eq:J_B_2}
\end{equation}
In particular, 
\begin{equation}
\fa_M(\cB)=\frac{1}{g_{\cB}2^{M}}, \ \ \fb_M(\cB)=\frac{(-1)^L}{g_{\cB}2^{M}}.
\label{eq:coefJBM}
\end{equation}

\end{lem}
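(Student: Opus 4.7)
The plan is to recognize $J_{\cB}(x,x)$ as a rational function of $Y$ and apply standard partial fraction decomposition. Starting from \eqref{eq:J_Bdiag} and using $(1-Y^2)^M = (1-Y)^M(1+Y)^M$, I can rewrite
$$J_{\cB}(x,x) = \frac{Y^L}{g_{\cB}(1-Y)^M(1+Y)^M},$$
which is a rational function of $Y$ whose only poles are at $Y = 1$ and $Y = -1$, each of order exactly $M$ (since $Y^L$ does not vanish at $Y = \pm 1$). Performing polynomial division—which contributes a polynomial of degree $L-2M$ in $Y$ when $L \ge 2M$, and is vacuous otherwise so that the $\fc_j$-sum is empty—and then decomposing the remaining proper part into partial fractions at the two poles produces the desired form \eqref{eq:J_B_2}. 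Existence and uniqueness of the constants $\fa_i(\cB), \fb_i(\cB), \fc_j(\cB)$ follow from the uniqueness of the partial fraction decomposition of a rational function with prescribed pole structure.

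To pin down the leading coefficients in \eqref{eq:coefJBM}, I will use the cover-up (residue) method. Multiplying both sides of \eqref{eq:J_B_2} by $(1-Y)^M$ and letting $Y \to 1$ kills every term on the right except $\fa_M(\cB)$, yielding
$$\fa_M(\cB) = \left.\frac{Y^L}{g_{\cB}(1+Y)^M}\right|_{Y=1} = \frac{1}{g_{\cB}2^M}.$$
Symmetrically, multiplying by $(1+Y)^M$ and letting $Y \to -1$ eliminates all terms except $\fb_M(\cB)$, giving
$$\fb_M(\cB) = \left.\frac{Y^L}{g_{\cB}(1-Y)^M}\right|_{Y=-1} = \frac{(-1)^L}{g_{\cB}2^M}.$$

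There is essentially no mathematical obstacle here; the proof is a routine application of partial fractions. The only minor bookkeeping point is the case $L < 2M$, in which the $\fc_j$-sum is interpreted as empty—this causes no problems because such a polynomial part is asymptotically dominated by the $(1-Y)^{-M}$ contribution via the tree-polynomial estimates in \eqref{asympoftnp}. The reason the lemma is worth stating separately is precisely to isolate the two residues $\fa_M(\cB)$ and $\fb_M(\cB)$, since these govern the leading asymptotics of $\langle x^n\rangle J_{\cB}(x,x)$ and will be summed over $\cB \in BG_k$ in the proof of Theorem~\ref{thm:generalfk}.
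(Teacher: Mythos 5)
Your proof is correct and follows essentially the same route as the paper: the paper establishes \eqref{eq:coefJBM} by multiplying both sides by $(1-Y^2)^M$ and substituting $Y=\pm 1$, which is exactly your cover-up method done in a single step. The additional discussion of existence and uniqueness via partial fractions (which the paper takes for granted) and of the degenerate case $L<2M$ is accurate but not a departure in method.
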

\begin{proof}
Multiplying both sides of \eqref{eq:J_Bdiag} and \eqref{eq:J_B_2} by $(1-Y^2)^M$ and substituting $Y = \pm 1$ 
yield \eqref{eq:coefJBM}. 
\end{proof}

For each $\cB \in BG_k$, we contract its
special cycles and paths and ignore the vertex set $V=(V_1,V_2)$. 
By this procedure, we obtain a multigraph $\cB^*$ from
$\cB$. Let $MG_k$ be the set of all multigraphs obtained
from $BG_k$ by this procedure. Define a mapping $\phi: BG_k \to MG_k$ by $\phi(\cB)=\cB^*$ and for $\cB^* \in MG_k$, 
$$
\phi^{-1}(\cB^*):=\{\cB \in BG_k : \phi(\cB)=\cB^*\}.
$$
All basic graphs which belong to $\phi^{-1}(\cB^*)$ have the
same number of special cycles and the same total number of special paths and edges. 
In what follows in this section, we only consider basic graphs $\cB \in
BG_k$ such that $M(\cB)=3(k-1)$ and multigraphs $\cB^* \in
MG_k$ obtained from such basic graphs $\cB$. From
\eqref{eq:a_1tod}, it follows that such a $\cB$ has no $\delta$-edge, i.e., $e(\cB)=0$ and $r_{{\rm sp}}(\cB)+s_{{\rm sp}}(\cB)=2(k-1)$ holds. 
For given $\cB^* \in MG_k$, we divide the set $\phi^{-1}(\cB^*)$ by pairs of 
$(r_{{\rm sp}}(\cB), s_{{\rm sp}}(\cB))$. 
For $i=0,\dots, 2(k-1)$, define
\begin{align*}
 \phi^{-1}(\cB^*)^{(2(k-1)-i,i)}:=\{\cB \in  \phi^{-1}(\cB^*) : (r_{{\rm sp}}(\cB), s_{{\rm sp}}(\cB))=(2(k-1)-i,i)\}.
\end{align*}
Then, we have
\begin{align}
\phi^{-1}(\cB^*)=\bigsqcup_{0\le i \le 2(k-1)} \phi^{-1}(\cB^*)^{(2(k-1)-i,i)}.
\label{eq:divset}
\end{align}
Note that $\phi^{-1}(\cB^*)^{(2(k-1),0)}$ and
$\phi^{-1}(\cB^*)^{(0,2(k-1))}$ are singletons, and each of
the element is determined by $\cB^*$ in a clear way. Indeed,
if $\cB^*$ has self-loops, replace them 
to minimal $\alpha_1$-cycles. Also, if $\cB^*$ has single
edges or multiple edges, replace them to
$\beta_1$-paths. Putting all vertices of $\cB^*$ in $V_1$
and by this procedure, we obtain a basic graph $\cB \in
\phi^{-1}(\cB^*)^{(2(k-1),0)}$. Clearly, the obtained graph
$\cB$ is unique. In a similar way, we have 
a unique element in
$\phi^{-1}(\cB^*)^{(0,2(k-1))}$. For the following
discussion, we denote by $\cB_{{\rm id}}$ the unique element in $\phi^{-1}(\cB^*)^{(2(k-1),0)}$. 

\begin{lem}\label{lemma:eqauto}
Let $\cB^* \in MG_k$ be given. Then, for $i=0,\dots, 2(k-1)$, 
\begin{align}
\sum_{\substack{\cB \in \phi^{-1}(\cB^*)^{(2(k-1)-i,i)} \\ M(\cB)=3(k-1)}}\frac{1}{g_{\cB}}=\binom{2(k-1)}{i}\frac{1}{g_{\cB_{{\rm id}}}}.
\label{eq:nomauto}
\end{align}
\end{lem}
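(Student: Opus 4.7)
The plan is to prove \eqref{eq:nomauto} by an orbit-counting argument applied to a natural action of a group $G^*$ on the $\binom{2(k-1)}{i}$-element set of bipartition labelings of the $2(k-1)$ vertices of $\cB^*$. I take $G^*$ to be the automorphism group of $\cB^*$ as a multigraph, where parallel edges between the same pair of vertices may be permuted and the two ends of each self-loop may be swapped; the identification $|G^*| = g_{\cB_{\mathrm{id}}}$ that I need will in any event drop out of the $i=0$ case below, so no separate verification is required.

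First, I would let $D_i$ denote the set of decorations $\omega: V(\cB^*) \to \{1,2\}$ with $|\omega^{-1}(2)| = i$, so $|D_i| = \binom{2(k-1)}{i}$, and equip $D_i$ with the obvious $G^*$-action. Next I would describe a surjection
\[
\Psi : D_i \longrightarrow \{\cB \in \phi^{-1}(\cB^*)^{(2(k-1)-i,i)} : M(\cB) = 3(k-1)\}
\]
by expanding each edge of $\cB^*$ in the unique way compatible with $\omega$ and with $e(\cB) = 0$: a self-loop at $v$ becomes a minimal $\alpha_{\omega(v)}$-cycle, an edge with both endpoints decorated $j$ becomes a minimal $\beta_j$-path, and an edge crossing the bipartition becomes either a $\gamma$-path (if it is the unique edge between its endpoints) or a minimal $\delta$-path (if it is one of several parallel edges). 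Every basic graph in the target set arises this way, and two decorations yield isomorphic basic graphs precisely when they lie in the same $G^*$-orbit.

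The pivotal claim to establish would then be $|\mathrm{Stab}_{G^*}(\omega)| = g_{\Psi(\omega)}$. Granted this, the orbit-stabilizer theorem gives
\[
\binom{2(k-1)}{i} = |D_i| = \sum_{\cB} \frac{|G^*|}{g_{\cB}},
\]
where the sum runs over basic graphs in the stated set, so that $\sum_\cB 1/g_\cB = \binom{2(k-1)}{i}/|G^*|$. Specializing to $i = 0$, the set $D_0$ is a singleton with image $\cB_{\mathrm{id}}$, forcing $g_{\cB_{\mathrm{id}}} = |G^*|$; substituting this back into the general formula proves \eqref{eq:nomauto}.

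The main obstacle will be verifying the stabilizer identification. Since the special points of $\Psi(\omega)$ are distinguished by having degree $\geq 3$, any bipartite automorphism of $\Psi(\omega)$ must permute them and preserve the decoration, hence descend to a vertex permutation of $\cB^*$ fixing $\omega$; conversely, one must lift each such symmetry of $\cB^*$ uniquely to $\Psi(\omega)$. The delicate bookkeeping here concerns the factor of $2$ per self-loop -- supplied on the $\Psi(\omega)$ side by the reflection of the corresponding minimal $\alpha$-cycle and on the $G^*$ side by the swap of the two ends of the self-loop -- and the permutations of multiple parallel edges in $\cB^*$, which must be matched by permutations of the corresponding parallel $\beta$-, $\gamma$-, or $\delta$-paths in $\Psi(\omega)$. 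This matching is mechanical but requires care.
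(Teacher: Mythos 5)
Your proof is correct, and it actually repairs a gap in the paper's own argument. The paper proceeds from the same starting point: label the $2(k-1)$ special points of $\cB_{\rm id}$, choose $i$ of them to move into $V_2$, producing $\binom{2(k-1)}{i}$ constructions $\cB_\ell$. It then asserts that $g_{\cB_\ell}=g_{\cB_{\rm id}}$ for every $\ell$ and deduces $\sum_{\cB}1/g_{\cB}=\sum_{\ell}1/g_{\cB_\ell}=\binom{2(k-1)}{i}/g_{\cB_{\rm id}}$, even while acknowledging that the $\cB_\ell$ may coincide as unlabeled graphs. Taken at face value this does not hold: for $\cB^*$ the multigraph on two vertices joined by three parallel edges ($k=2$), one has $\cB_{\rm id}\cong K_{2,3}$ with $g_{\cB_{\rm id}}=12$, while the unique $\cB$ with $(r_{\rm sp},s_{\rm sp})=(1,1)$, consisting of three parallel $\delta$-paths, has $g_{\cB}=6$, and the two decorations producing it collapse into a single orbit. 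Your orbit--stabilizer formulation, resting on the identification $|\mathrm{Stab}_{G^*}(\omega)|=g_{\Psi(\omega)}$, is precisely the mechanism that makes the orbit sizes compensate simultaneously for these collisions and for the variation in $g_{\cB}$; your observation that the $i=0$ case pins down $|G^*|=g_{\cB_{\rm id}}$ neatly avoids having to analyze $\mathrm{Aut}(\cB^*)$ separately. The one step you defer, matching multigraph automorphisms (parallel-edge permutations and self-loop flips) with bipartite automorphisms of $\Psi(\omega)$, is the genuine content; it goes through because a minimal $\alpha_j$-cycle has exactly the reflection as its internal bipartition-preserving symmetry while minimal $\beta$-, $\gamma$-, and $\delta$-paths have none, so each element of $\mathrm{Stab}_{G^*}(\omega)$ lifts uniquely to $\Psi(\omega)$.
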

\begin{proof} 
For $i=0$, $\cB_{{\rm id}}$ and $g_{\cB_{{\rm id}}}$ are determined by $\cB^*$. Clearly, 
$r_{{\rm sp}}(\cB_{{\rm id}})=2(k-1)$ holds. We label all
 $2(k-1)$ special points of $\cB_{{\rm id}}$, and we
 construct $\cB \in \phi^{-1}(\cB^*)^{(2(k-1)-i,i)},
 i=0,\dots, 2(k-1)$ by the following way. For given $i$, we
 choose $\binom{2(k-1)}{i}$ labeled special points in
 $\cB_{{\rm id}}$, and we put these points in $V_2$ without
 changing the connectivity of the vertices. Here, for each $\beta_1$-path in $\cB_{{\rm
 id}}$, delete or add a normal point to create
 $\gamma$-path or $\delta$-path from it. Then, we have
 $\binom{2(k-1)}{i}$ basic graphs
 $\cB_1,\cdots,\cB_{\binom{2(k-1)}{i}}$ from $\cB_{{\rm
 id}}$ which satisfy $(r_{{\rm sp}}, s_{{\rm
 sp}})=(2(k-1)-i,i)$. Since this procedure does not change
 the connectivity of graphs, the numbers of the automorphisms of obtained graphs $\{\cB_{\ell}\}_{\ell=1}^{\binom{2(k-1)}{i}}$ are $g_{\cB_{{\rm id}}}$. To obtain $ \phi^{-1}(\cB^*)^{(2(k-1)-i,i)}$, we forget all labels of special points of $\{\cB_{\ell}\}_{\ell=1}^{\binom{2(k-1)}{i}}$. Nevertheless each of ${\binom{2(k-1)}{i}}$ unlabeled graphs may not be different, we have
\begin{align*}
\sum_{\substack{\cB \in \phi^{-1}(\cB^*)^{(2(k-1)-i,i)} \\ M(\cB)=3(k-1)}}\frac{1}{g_{\cB}}
=\sum_{\ell=0}^{{\binom{2(k-1)}{i}}}\frac{1}{g_{\cB_{\ell}}}
=\binom{2(k-1)}{i}\frac{1}{g_{\cB_{{\rm id}}}}.
\end{align*}
\end{proof}

\begin{prop}\label{prop:coef1/(1+Y)^M}
For $\cB^* \in MG_k$,
\begin{align*}
\sum_{\substack{\cB \in \phi^{-1}(\cB^*) \\ M(\cB)=3(k-1)}} \fb_{3(k-1)}(\cB)=0.
\end{align*}
\end{prop}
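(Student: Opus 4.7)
The plan is to reduce the signed sum to a parity computation on bipartition assignments of the multigraph $\cB^*$, using the key structural fact that $\cB^*$ is forced to be $3$-regular whenever $M(\cB)=3(k-1)$.

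First, by Lemma~\ref{lemma:decomJB}, $\fb_{3(k-1)}(\cB)=(-1)^{L(\cB)}/(g_{\cB}\,2^{3(k-1)})$. Using $M(\cB)=3(k-1)$ and $r_{\rm sp}(\cB)+s_{\rm sp}(\cB)=2(k-1)$, a direct modulo-$2$ reduction of the defining expression for $L$ gives $L\equiv(k-1)+c+d\pmod 2$, so it suffices to prove
$$\sum_{\substack{\cB\in\phi^{-1}(\cB^*)\\ M(\cB)=3(k-1)}}\frac{(-1)^{c(\cB)+d(\cB)}}{g_{\cB}}=0.$$
The combinatorial meaning of this sign is that $c+d$ counts precisely the non-loop edges of $\cB^*$ whose two endpoints are assigned to opposite parts of the bipartition of $\cB$.

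Next, I would upgrade Lemma~\ref{lemma:eqauto} to a signed version by the same labeled argument as in its proof: fixing a labelling of the $2(k-1)$ special points of $\cB_{\rm id}$, each subset $S\subset\{1,\dots,2(k-1)\}$ specifying which labels are moved into $V_2$ yields a labeled basic graph $\cB_S$, and since $c,d$ are invariants of the unlabeled graph, the same orbit counting yields
$$\sum_{\substack{\cB\in\phi^{-1}(\cB^*)\\ M(\cB)=3(k-1)}}\frac{(-1)^{c(\cB)+d(\cB)}}{g_{\cB}}=\frac{1}{g_{\cB_{\rm id}}}\sum_{\epsilon\colon V(\cB^*)\to\{\pm1\}}(-1)^{c(\cB_\epsilon)+d(\cB_\epsilon)},$$
where the assignment is encoded by $\epsilon(v)=+1$ if $v\in V_1$ and $-1$ otherwise. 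Using the interpretation of $c+d$ above, $(-1)^{c+d}=\prod_{(u,v)}\epsilon(u)\epsilon(v)=\prod_{v\in V(\cB^*)}\epsilon(v)^{n(v)}$, where the first product is over non-loop edges of $\cB^*$ and $n(v)$ denotes the non-loop degree of $v$. By Fubini, the sum over $\epsilon$ factorises as $\prod_{v}(1+(-1)^{n(v)})$.

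The main step, and the heart of the argument, is to show that $n(v)$ is odd at every vertex of $\cB^*$. Because $M(\cB)=3(k-1)$ and $e(\cB)=0$, the multigraph $\cB^*$ has $3(k-1)$ edges (counting self-loops and multi-edges) on $|V(\cB^*)|=2(k-1)$ vertices. The total degree is $6(k-1)$, giving average degree exactly $3$; combined with $\deg_{\cB^*}(v)\ge 3$ at every vertex (each $v$ being a special point of $\cB$), this forces every vertex to have degree exactly $3$. In particular $v$ carries at most one self-loop, so $n(v)\in\{1,3\}$, which is odd in both cases. Each factor $1+(-1)^{n(v)}$ therefore vanishes, completing the proof. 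The only delicate point is the passage from the unlabeled to the labeled sum with the sign weight, but this is immediate once one notes that $c+d$ is invariant under automorphisms of $\cB_{\rm id}$.
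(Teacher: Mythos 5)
Your proof is correct and follows essentially the same route as the paper: both reduce the sign to $(-1)^{c+d}$ using $M=3(k-1)$, $e=0$ and $r_{\rm sp}+s_{\rm sp}=2(k-1)$, both observe that these constraints force $\cB^*$ to be $3$-regular (hence every vertex has odd non-loop degree), and both then cancel by symmetrizing over the $2^{2(k-1)}$ bipartition assignments on top of Lemma~\ref{lemma:eqauto}. Your vertex-wise factorization $\prod_v\bigl(1+(-1)^{n(v)}\bigr)=0$ is a slightly tidier packaging of what the paper does via Lemma~\ref{lemma:signL} (showing $c+d\equiv i\pmod 2$) followed by the alternating binomial identity $\sum_{i}\binom{2(k-1)}{i}(-1)^i=0$; the two are the same cancellation viewed locally at vertices rather than globally through the statistic $i$.
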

To show Proposition \ref{prop:coef1/(1+Y)^M}, 
from \eqref{eq:coefJBM} it is sufficient to prove that for $\cB^* \in MG_k$,
\begin{align}
\sum_{i=0}^{2(k-1)}\sum_{\substack{\cB \in \phi^{-1}(\cB^*)^{(2(k-1)-i,i)} \\ M(\cB)=3(k-1)}} \frac{(-1)^{L(\cB)}}{g_{\cB}}=0.
\label{eq:reduction1}
\end{align}
For the signature of $(-1)^{L(\cB)}$, we have the following lemma.
\begin{lem}\label{lemma:signL}
Suppose that for given $\cB^* \in MG_k$, there exists $\cB \in \phi^{-1}(\cB^*)$ such that $M(\cB)=3(k-1)$. Then, for $i$ and $\cB \in \phi^{-1}(\cB^*)^{(2(k-1)-i,i)}$, 
\begin{align*}
(-1)^{L(\cB)}=(-1)^{k-1+i}.
\end{align*}
\end{lem}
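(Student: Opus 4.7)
My plan is to prove the lemma by a direct parity calculation of $L(\cB)$, reducing it to a congruence that will follow from an edge-counting argument at the special points of $V_2$.

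First I would compute $L(\cB) \pmod{2}$ from its definition $L(\cB)=M+r_{{\rm sp}}+s_{{\rm sp}}+2a_1+2a_2-c+d$. Using the hypothesis $M(\cB)=3(k-1)$ together with the equality $r_{{\rm sp}}+s_{{\rm sp}}=2(k-1)$ (which also follows from $M=3(k-1)$ via \eqref{eq:a_1tod} since then $e=0$), the even terms drop out and I would obtain
\[
L(\cB)\equiv 3(k-1)+2(k-1)+c+d\equiv (k-1)+c+d\pmod 2,
\]
using $-c\equiv c\pmod 2$. Since $i=s_{{\rm sp}}$ for $\cB\in\phi^{-1}(\cB^*)^{(2(k-1)-i,i)}$, the lemma reduces to showing
\begin{equation*}
c+d\equiv s_{{\rm sp}}\pmod 2.
\end{equation*}

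To prove this congruence I would use the restriction imposed by $M=3(k-1)$ on the degrees of the special points. From \eqref{eq:p_B},
\[
p_\cB=3(k-1)-M=\sum_{v\in\text{special}}(\deg(v)-3)+e,
\]
so $M=3(k-1)$ forces $e=0$ and $\deg(v)=3$ for every special point $v$. In particular, the sum of degrees taken over the special points in $V_2$ equals $3s_{{\rm sp}}$. On the other hand, the same sum can be computed by tallying the contribution of each contracted component incident to a special point of $V_2$: an $\alpha_2$-cycle contributes $2$ at its unique special point in $V_2$; a $\beta_2$-path contributes $1$ at each of its two endpoints in $V_2$; a $\gamma$-path contributes $1$ to its (unique) $V_2$-endpoint; a $\delta$-path contributes $1$ to its $V_2$-endpoint; while $\alpha_1$-cycles and $\beta_1$-paths contribute nothing. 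Hence
\[
\sum_{v\in V_2,\ \text{special}}\deg(v)=2a_2+2b_2+c+d.
\]
Equating the two expressions gives $3s_{{\rm sp}}=2a_2+2b_2+c+d$, and taking this modulo $2$ yields the desired congruence $s_{{\rm sp}}\equiv c+d\pmod 2$.

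The only subtle point, and the main (very mild) obstacle, is the second degree-count: one must be careful that the enumeration of edge-incidences at special points of $V_2$ covers every case of minimal special cycle/path exactly once and distinguishes which endpoint lies in $V_2$ (relevant for $\gamma$- and $\delta$-paths, whose two endpoints belong to different parts). Once this bookkeeping is verified, combining the two displays above gives $L(\cB)\equiv k-1+s_{{\rm sp}}=k-1+i\pmod 2$, completing the proof.
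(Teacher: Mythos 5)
Your proof is correct, and for the key combinatorial step it takes a genuinely different route from the paper. The reduction to $L(\cB)\equiv(k-1)+c+d\pmod 2$ and the observation that $M(\cB)=3(k-1)$ forces $e=0$ and $\deg(v)=3$ for every special point are the same starting point as the paper. For the remaining congruence $c+d\equiv s_{\rm sp}\pmod 2$, however, the paper argues indirectly: it starts from $\cB_{\rm id}$ (the unique element with $s_{\rm sp}=0$), describes the possible local configurations of its special points, and then tracks how $\beta_1$-paths transform into $\gamma$-, $\delta$- (or $\beta_2$-) paths when $i$ of the special points are moved from $V_1$ to $V_2$, asserting that the number of $\gamma$- and $\delta$-paths incident to each $v'\in V_2$ has odd parity. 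Your proof instead does a single global degree count in $\cB$: summing $\deg(v)=3$ over the $s_{\rm sp}$ special points of $V_2$ on one side, and tallying edge-incidences by path/cycle type on the other, giving $3s_{\rm sp}=2a_2+2b_2+c+d$ and hence the congruence immediately. This is cleaner and avoids having to track the per-vertex configuration at all; in particular it is insensitive to the appearance of $\beta_2$-paths between two relocated special points, a case for which the paper's per-vertex parity claim does not hold individually (only the aggregate does). The only bookkeeping to be careful about, as you note, is that $\gamma$- and $\delta$-paths each have exactly one $V_2$-endpoint, $\beta_2$-paths have two, $\alpha_2$-cycles contribute $2$ at their unique special point, and $\alpha_1$-cycles and $\beta_1$-paths contribute nothing; you have this right.
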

\begin{proof}
Recall that for $\cB \in BG_k$, $L(\cB)=M+r_{{\rm sp}}+s_{{\rm sp}}+2a_1+2a_2-c+d$. By the assumption, we have
\begin{align}
M=3(k-1),\ \ r_{{\rm sp}}+s_{{\rm sp}}=2(k-1),
\label{eq:numofMsp}
\end{align}
which give
\begin{align*}
(-1)^{L(\cB)}=(-1)^{(k-1)-c(\cB)+d(\cB)}.
\end{align*}
By the equation \eqref{eq:numofMsp}, for any considered
 $\cB$, degrees of each special point in $\cB$ are three. In
 particular, so are that in $\cB_{{\rm id}}$. It follows
 that each of special points $v \in \cB_{{\rm id}}$
 satisfies one of the following; $v$
 has an $\alpha_1$-cycle and a $\beta_1$-path, or a
 $\beta_1$-path connected to $v_1$ and two $\beta_1$-paths
 connected to $v_2$, or three $\beta_1$-paths connected to
 $v_3$, where $v_i, i=1,2,3$ are different special points of
 $\cB_{{\rm id}}$. Note that each of $\cB$ is obtained from
 $\cB_{{\rm id}}$ by putting $i$ special points in $V_1$
 into $V_2$ and replacing $\beta_1$-paths with $\gamma$-
 or $\delta$-paths in the same way as in the proof of Lemma \ref{lemma:eqauto}. Hence, for any special point $v' \in V_2$ of $\cB$, the difference of the numbers of $\gamma$- and $\delta$-paths connected to $v'$ is odd. Therefore, we have $(-1)^{(k-1)-c(\cB)+d(\cB)}=(-1)^{(k-1)+i}$, which completes the proof.
\end{proof}

\begin{proof}[Proof of Proposition \ref{prop:coef1/(1+Y)^M}]
By Lemmas \ref{lemma:eqauto} and \ref{lemma:signL}, we have
\begin{align*}
({\rm LHS\ of}\  \eqref{eq:reduction1})
&=\sum_{i=0}^{2(k-1)}(-1)^{k-1+i}\sum_{\substack{\cB \in \phi^{-1}(\cB^*)^{(2(k-1)-i,i)} \\ M(\cB)=3(k-1)}} \frac{1}{g_{\cB}}\\
&=\frac{(-1)^{k-1}}{g_{\cB_{{\rm id}}}}\sum_{i=0}^{2(k-1)}\binom{2(k-1)}{i}(-1)^{i}=0.
\end{align*}
Hence, equation \eqref{eq:reduction1} holds and Proposition \ref{prop:coef1/(1+Y)^M} is proved.
\end{proof}

Proposition \ref{prop:coef1/(1+Y)^M} shows that for any $\cB^* \in MG_k$, $\sum_{\cB \in \phi^{-1}(\cB^*)}J_{\cB}(x,x)$ does not have the terms of  $(1+Y)^{-3(k-1)}$, 
and so the leading asymptotic behavior of $F_k(x,x)$ is determined by the summation of $\frac{\fa_{3(k-1)}(\cB)}{(1-Y)^{3(k-1)}}$. We give the exact value of the coefficient of the summation.

\begin{prop}\label{prop:coef1/(1-Y)^M}
Suppose that for given $\cB^* \in MG_k$, there exists $\cB \in \phi^{-1}(\cB^*)$ such that $M(\cB)=3(k-1)$. Then,
\begin{align*}
\sum_{\substack{\cB \in \phi^{-1}(\cB^*) \\ M(\cB)=3(k-1)}} \fa_{3(k-1)}(\cB)=\frac{1}{2^{k-1}}\frac{1}{g_{\cB_{{\rm id}}}},
\end{align*}
where $\cB_{{\rm id}} \in \phi^{-1}(\cB^*)$ is uniquely determined from $\cB^*$.  
\end{prop}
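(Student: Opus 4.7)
The plan is to combine the explicit formula for $\fa_{3(k-1)}(\cB)$ given by Lemma~\ref{lemma:decomJB} with the counting identity of Lemma~\ref{lemma:eqauto}, and close the computation by the binomial identity $\sum_{i=0}^{n}\binom{n}{i}=2^{n}$. No new combinatorial input is needed, so the argument is essentially a bookkeeping exercise once these two earlier results are in hand.

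First I would invoke Lemma~\ref{lemma:decomJB} with $M(\cB)=3(k-1)$ to write
\[
\fa_{3(k-1)}(\cB)=\frac{1}{g_{\cB}\,2^{3(k-1)}}.
\]
Pulling the constant $2^{-3(k-1)}$ out of the sum then reduces the assertion to computing
\[
\sum_{\substack{\cB\in\phi^{-1}(\cB^{*})\\ M(\cB)=3(k-1)}}\frac{1}{g_{\cB}}.
\]

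Next I would use the disjoint decomposition \eqref{eq:divset} over pairs $(r_{\rm sp},s_{\rm sp})=(2(k-1)-i,i)$ for $i=0,1,\dots,2(k-1)$, which is valid since any $\cB$ with $M(\cB)=3(k-1)$ automatically has $r_{\rm sp}(\cB)+s_{\rm sp}(\cB)=2(k-1)$ by \eqref{eq:a_1tod}. Applying Lemma~\ref{lemma:eqauto} to each piece gives
\[
\sum_{\substack{\cB\in\phi^{-1}(\cB^{*})\\ M(\cB)=3(k-1)}}\frac{1}{g_{\cB}}
=\sum_{i=0}^{2(k-1)}\binom{2(k-1)}{i}\frac{1}{g_{\cB_{\rm id}}}
=\frac{2^{2(k-1)}}{g_{\cB_{\rm id}}}.
\]

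Combining these two steps yields
\[
\sum_{\substack{\cB\in\phi^{-1}(\cB^{*})\\ M(\cB)=3(k-1)}}\fa_{3(k-1)}(\cB)
=\frac{1}{2^{3(k-1)}}\cdot\frac{2^{2(k-1)}}{g_{\cB_{\rm id}}}
=\frac{1}{2^{k-1}}\cdot\frac{1}{g_{\cB_{\rm id}}},
\]
which is the claimed identity. There is no real obstacle here; the only thing to verify carefully is the applicability of Lemma~\ref{lemma:eqauto} for all $i$ (including the boundary cases $i=0$ and $i=2(k-1)$, corresponding to the singletons $\phi^{-1}(\cB^{*})^{(2(k-1),0)}$ and $\phi^{-1}(\cB^{*})^{(0,2(k-1))}$), which is immediate from the construction described in the proof of that lemma.
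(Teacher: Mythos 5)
Your proof is correct and follows essentially the same route as the paper: apply \eqref{eq:coefJBM} from Lemma~\ref{lemma:decomJB} to factor out $2^{-3(k-1)}$, then use the decomposition \eqref{eq:divset} together with Lemma~\ref{lemma:eqauto} and the binomial identity $\sum_{i=0}^{2(k-1)}\binom{2(k-1)}{i}=2^{2(k-1)}$. The only difference is cosmetic bookkeeping in the order the constants are collected.
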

\begin{proof}
By \eqref{eq:coefJBM} and Lemma~\ref{lemma:eqauto}, 
we have
\begin{align*}
\sum_{\substack{\cB \in \phi^{-1}(\cB^*) \\ M(\cB)=3(k-1)}} \fa_{3(k-1)}(\cB)
&=\frac{1}{2^M}\sum_{i=0}^{2(k-1)}\sum_{\substack{\cB \in \phi^{-1}(\cB^*)^{(2(k-1)-i,i)} \\ M(\cB)=3(k-1)}}\frac{1}{g_{\cB}}\\
&=\frac{1}{2^Mg_{\cB_{{\rm id}}}}\sum_{i=0}^{2(k-1)}\binom{2(k-1)}{i}\\
&=\frac{1}{2^{k-1}}\frac{1}{g_{\cB_{{\rm id}}}},
\end{align*}
where we used $M=3(k-1)$ in the last equation.
\end{proof}

\subsection{Basic graphs on complete graphs}\label{subsec:6.2}
Now we consider the correspondence of $\cB \in BG_k$ to a basic graph with respect to complete graphs $\{K_n\}_{n\ge1}$. A basic graph $\cA$ on $\{K_n\}_{n\ge1}$ consists of the following four types of (minimal) special cycle, paths and edge as in Figure \ref{fig:1}. For details, see \cite[Section 6]{W77}. 
An example for case $k=2$ is shown in Figure~\ref{fig:2}.

\begin{figure}[htbp]
\begin{center}
\includegraphics[width=0.9\hsize]{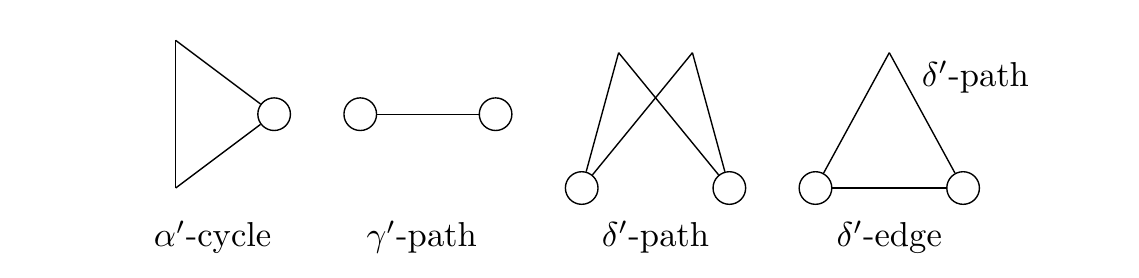}
\end{center}
\caption{
Four types of special cycle and paths of basic graphs on $\{K_n\}_{n\ge1}$. 
The circles denote special points.
}
\label{fig:1}
\end{figure}

\begin{figure}[htbp]
  \begin{minipage}[b]{0.45\linewidth}
    \centering
    \includegraphics[scale=1.0]{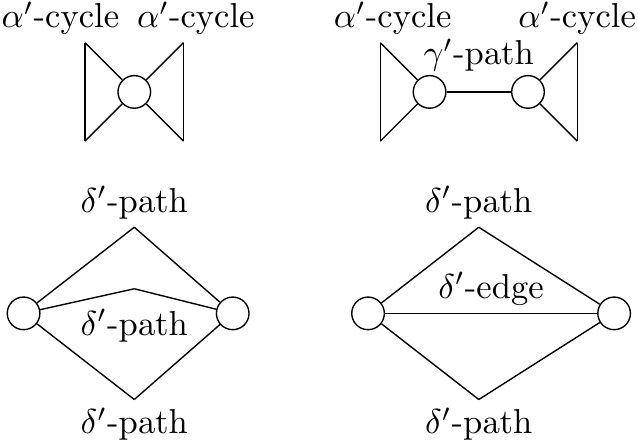}
    \subcaption{Basic graph}
    \label{fig:2_basic}
  \end{minipage}
  \begin{minipage}[b]{0.45\linewidth}
    \centering
    \includegraphics[scale=0.8]{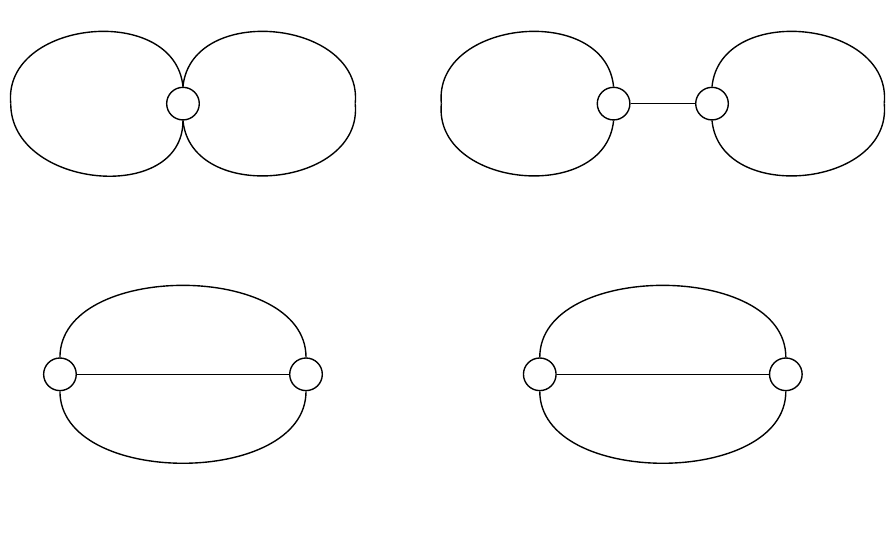}
    \subcaption{Multigraph}
    \label{fig:2_multi}
  \end{minipage}
  \caption{Example of the case $k=2$ (cf. \cite[Section 7]{W77}). The mapping $\psi$ transfers each of the basic graphs of  \ref{fig:2_basic} to each of the multigraphs of \ref{fig:2_multi}.}
  \label{fig:2}
\end{figure}

Recall that $N(n,k)$ is the number of connected $(n,n-1+k)$-graphs on $K_n$, which was introduced in Section 1. 
Let $W_k, k\ge1$ be the exponential generating function of $N(n,k)$:
$$
W_k(x)=\sum_{n=1}^{\infty}N(n,k)\frac{x^n}{n!}.
$$
Note that $W_1(x)$ is the exponential generating function for ``unicycles'' on $\{K_n\}_{n\ge1}$, which corresponds to $F_1(x,x)$ for bipartite graphs.

\begin{prop}[\cite{W77}]\label{prop:Wright} For $k\ge 1$, $W_k(x)$ is expressed by the summation with respect to basic graphs $\cA$:
\begin{align*}
W_k(x)=\sum_{\cA \in BG'_k}J_{\cA}(x)
\end{align*} 
with 
\begin{align}
J_{\cA}(x)=\frac{Y^{L'(\cA)}}{g_{\cA}(1-Y)^{M'(\cA)}},
\label{eq:J_A}
\end{align}
where $BG'_k$ is the set of basic graphs on complete graphs having \textit{$k$ cycles} and $M'(\cA) \le 3(k-1)$, $L'(\cA)$ and $g_{\cA}$ are constants depending only on $\cA$.
\end{prop}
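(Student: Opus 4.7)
The plan is to adapt the proof of Theorem~\ref{thm:expressionofF_k} to the setting of complete graphs, replacing the pair of bipartite rooted-tree generating functions $(T_x,T_y)$ by the single generating function $Y(x)=\sum_{n\ge 1} n^{n-1}\frac{x^n}{n!}$ and the seven minimal configurations of Figure~\ref{fig:specialpaths} by the four minimal configurations of Figure~\ref{fig:1}.

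First I would take a labeled connected $(n,n-1+k)$-graph $G$ on $K_n$ and iteratively delete leaves together with their incident edges until none remain. Each such deletion removes exactly one vertex and one edge, so the quantity $|E|-|V|+1=k$ is preserved, and the resulting core $G'$ has every vertex of degree $\ge 2$ and still carries $k$ independent cycles. The core decomposes into special points ($\deg(v)\ge 3$) joined by maximal chains of normal points ($\deg(v)=2$), which sort into the four minimal types in Figure~\ref{fig:1}. Contracting each configuration to its minimal representative and forgetting vertex labels produces the basic graph $\cA=\cA(G)\in BG'_k$. The handshaking identity $\sum_{v\in V'}(\deg(v)-2)=2(k-1)$ bounds the number of special points by $2(k-1)$, and a counting argument analogous to \eqref{eq:a_1tod} then yields $M'(\cA)\le 3(k-1)$ and finiteness of $BG'_k$.

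Next I would fix $\cA\in BG'_k$ and count labeled connected $(n,n-1+k)$-graphs $G$ with $\cA(G)=\cA$ by reversing the contraction in two steps. In step one, into each of the $M'(\cA)$ insertable special configurations of $\cA$ one inserts an arbitrary nonnegative number of additional normal points; the absence of any parity restriction in the non-bipartite setting means that each configuration contributes a factor $\sum_{m\ge 0}Y^m=(1-Y)^{-1}$. In step two, one attaches a labeled rooted tree to each vertex of the expanded core via the convolution identity that is the complete-graph analogue of \eqref{eq:phi}; this introduces a factor $Y^{L'(\cA)}$, where $L'(\cA)$ is the number of vertices of the minimal basic graph $\cA$. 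Dividing by $g_{\cA}$ to correct for the automorphisms of $\cA$ then gives \eqref{eq:J_A}, and summing over $\cA\in BG'_k$ recovers $W_k(x)$, since distinct basic graphs correspond to disjoint families of labeled $(n,n-1+k)$-graphs.

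The main obstacle is pinning down the four-type classification of Figure~\ref{fig:1} together with the bookkeeping of $L'(\cA)$ and $g_{\cA}$: unlike the bipartite case, two distinct special points in $K_n$ may be joined by a direct edge, by a longer chain, or by several chains in parallel, so one must decide carefully which configurations may legitimately be contracted to a single edge versus being forced to retain internal normal points, and one must verify that the automorphism count is correctly $g_{\cA}$ at each step. Once this classification is fixed, the generating-function manipulation is essentially that of the proof of Theorem~\ref{thm:expressionofF_k} with every $W$-factor replaced by a $Y$-factor and the exponent bookkeeping simplified accordingly.
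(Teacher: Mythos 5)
The paper does not supply its own proof of this proposition; it is stated as a citation to Wright \cite{W77}, with the four minimal configurations given pictorially in Figure~\ref{fig:1} and the reader referred to \cite[Section~6]{W77} for details. Your sketch correctly reconstructs that argument, and it is the natural complete-graph analogue of the paper's own proof of Theorem~\ref{thm:expressionofF_k}: prune leaves to obtain the core, classify maximal chains of degree-two vertices between (or looping back to) special points into $\alpha'$-cycles, $\gamma'$-paths, $\delta'$-paths and $\delta'$-edges, contract to the minimal representative to get $\cA$, and then reverse by inserting arbitrarily many normal points into each of the $M'(\cA)$ expandable configurations (each contributing a geometric factor $(1-Y)^{-1}$ precisely because no bipartite parity constraint forces insertions in pairs), grafting a labeled rooted tree at each of the $L'(\cA)$ vertices (contributing $Y^{L'(\cA)}$ via the complete-graph convolution identity), and dividing by $g_{\cA}$. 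Your identification of $L'(\cA)$ as the vertex count of the minimal basic graph agrees with the diagonal specialization $L(\cB)=|V_1''|+|V_2''|$ in the bipartite formula \eqref{eq:J_Bdiag}, and your flagged ``main obstacle'' — distinguishing which of several parallel chains between two special points may be a single edge ($\delta'$-edge) versus which must keep an internal normal point ($\delta'$-path) under simplicity — is indeed the only genuinely delicate point in Wright's classification and is handled exactly as you describe. The bound $M'(\cA)\le 3(k-1)$ follows from the handshaking estimate on special points combined with the edge count, mirroring \eqref{eq:numofsppoints}--\eqref{eq:a_1tod}.
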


\begin{lem} \label{lemma:coefJBM'}
For $\cA \in BG'_k$, there exist unique constants $\{\fa'_i(\cA)\}_{i=1}^{M'}, \{\fc'_j(\cA)\}_{j=0}^{L'-M'}$ such that
\begin{align*}
J_{\cA}=\sum_{i=1}^{M'} \frac{\fa'_i(\cA)}{(1-Y)^i}+\sum_{j=0}^{L'-M'}\fc'_j(\cA)Y^j,\ \ \fa'_{M'}(\cA)=\frac{1}{g_{\cA}}.
\end{align*}
\end{lem}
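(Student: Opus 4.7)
The plan is to perform a standard partial fraction decomposition of the rational function $J_{\cA}(x) = \frac{Y^{L'}}{g_{\cA}(1-Y)^{M'}}$ in the formal variable $Y$, mimicking the proof of Lemma~\ref{lemma:decomJB}. Since $J_{\cA}$ is a rational function of $Y$ whose only pole is at $Y=1$ with multiplicity $M'$, the standard theory immediately gives existence and uniqueness of a decomposition into a finite sum of principal parts $\fa'_i(\cA)/(1-Y)^i$ for $i=1,\dots,M'$ plus a polynomial part in $Y$, so the main task is to identify the polynomial part explicitly and extract $\fa'_{M'}(\cA)$.

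First I would set $u = 1-Y$, so that the task is to decompose $\frac{(1-u)^{L'}}{u^{M'}}$ as a Laurent polynomial in $u$. Applying the binomial theorem gives
\[
\frac{(1-u)^{L'}}{u^{M'}} = \sum_{k=0}^{L'}\binom{L'}{k}(-1)^k u^{k-M'}.
\]
The terms with $k < M'$ contribute $\sum_{k=0}^{M'-1}(-1)^k\binom{L'}{k}(1-Y)^{k-M'}$, which is exactly the principal part $\sum_{i=1}^{M'}\fa'_i(\cA)/(1-Y)^i$ after setting $i = M'-k$; in particular $k=0$ gives $\fa'_{M'}(\cA) = 1/g_{\cA}$, as asserted. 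The terms with $k \ge M'$ contribute $\sum_{k=M'}^{L'}(-1)^k\binom{L'}{k}(1-Y)^{k-M'}$, which is a polynomial in $(1-Y)$ of degree $L' - M'$ and hence, by another application of the binomial theorem, a polynomial in $Y$ of the same degree $L'-M'$. Collecting coefficients yields the constants $\fc'_j(\cA)$ for $j=0,\dots,L'-M'$.

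Uniqueness is immediate: any two decompositions of the stated form would differ by an element of the form $\sum_{i=1}^{M'} \alpha_i/(1-Y)^i + \sum_{j=0}^{L'-M'}\gamma_j Y^j$ that is identically zero, and multiplying through by $(1-Y)^{M'}$ yields a polynomial identity in $Y$ that forces all coefficients to vanish by comparing degrees and evaluating at $Y=1$ iteratively. Dividing everything by $g_{\cA}$ at the end recovers the lemma. There is no real obstacle here: the argument is essentially an unpacked version of partial fractions, parallel to Lemma~\ref{lemma:decomJB}, the only mild bookkeeping being the change of variables $u = 1-Y$ and the re-expansion of the resulting polynomial in $u$ back to one in $Y$ to match the stated form.
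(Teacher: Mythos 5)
Your proof is correct and follows essentially the same route as the paper, which simply says to substitute $\theta=1-Y$ into \eqref{eq:J_A} and apply the binomial expansion to the numerator; your version just spells out the resulting Laurent expansion in $u=1-Y$, reads off $\fa'_{M'}(\cA)=1/g_{\cA}$ from the $k=0$ term, and adds the (standard) uniqueness argument that the paper leaves implicit.
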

\begin{proof}
To show the second equation, put $\theta=1-Y$ in
 \eqref{eq:J_A} and apply the binomial expansion to the numerator. 
\end{proof}

For each $\cA \in BG'_k$ we contract their special cycles and paths and obtain a multigraph $\cA^*$. Define $\psi: BG'_k \to MG_k$ be the mapping of the contraction. Note that $\psi$ is not injective, but if $\psi(\cA_1)=\psi(\cA_2)=\cA^*$ for some $\cA_1, \cA_2$, then the difference of the two graphs is only due to the difference of their $\delta'$-paths and $\delta'$-edges. Define
$$
BG'_{k}|_{3(k-1)}:=\{\cA \in BG'_k : M'(\cA) = 3(k-1)\}.
$$
Let $\psi|_{3(k-1)}$ be the restriction to
$BG'_k|_{3(k-1)}$ of $\psi$, then this mapping is
bijective from $BG'_k|_{3(k-1)}$ to $MG_k$ i.e,
$\psi|_{3(k-1)}^{-1}(\cA^*)$ is a singleton. Indeed, if
$\cA^*$ has self-loops, replace them to
minimal $\alpha'$-cycles. Also, if $\cA^*$ has single edges
or multiple edges, replace them to $\gamma'$-paths or $\delta'$-paths, respectively. By this procedure, we obtain a unique basic graph 
$\cA \in BG'_k|_{3(k-1)}$, and then $\psi|_{3(k-1)}^{-1}(\cA^*)=\{\cA\}$. 

\subsection{Proof of the asymptotic equality \eqref{eq:f(n,n+k)asmthm}}\label{subsec:6.3}
We will prove the asymptotic equality \eqref{eq:f(n,n+k)asmthm}. Take $\cB^* \in MG_k$ such that there exists $\cB \in \phi^{-1}(\cB^*)$ satisfying $M(\cB)=3(k-1)$. Then, there exist unique $\cB_{{\rm id}} \in \phi^{-1}(\cB^*)$ and $\cA=\cA_{\cB^*}:=\psi^{-1}|_{3(k-1)}(\cB^*) \in BG'_k$. Then, we have
\begin{align}
g_{\cB_{{\rm id}}}=g_{\cA},
\label{eq:autoBidandA}
\end{align}
because mappings $\phi$ and $\psi$ preserve the connectivity between each of vertices in $\cB$ and $\cA$, respectively. By Proposition \ref{prop:coef1/(1-Y)^M} and \eqref{eq:autoBidandA}, we have
\begin{align}
\sum_{\substack{\cB \in \phi^{-1}(\cB^*) \\ M(\cB)=3(k-1)}} \fa_{3(k-1)}(\cB)
=\frac{1}{2^{k-1}}\frac{1}{g_{\cA}}.\label{eq:coefJ_BandJ_A}
\end{align}

From Proposition \ref{prop:Wright}, the asymptotic behavior of $\langle x^n \rangle W_{k-1}(x)$ is determined by the summation of $J_{\cA}(x)$ with respect to $\cA$ such that $M'(\cA)=3(k-1)$. Hence, by Lemma \ref{lemma:coefJBM'} we have
\begin{align*}
N(n,k)&=\langle x^n \rangle W_k(x) \\
&\sim \langle x^n \rangle\sum_{\cA \in BG'_k|_{3(k-1)}}J_{\cA}(x)\\ 
&\sim \sum_{\cA \in BG'_k|_{3(k-1)}}\langle x^n \rangle\left(\frac{\fa'_{3(k-1)}(\cA)}{(1-Y)^{3(k-1)}}\right)\\ 
&= \Bigg(\sum_{\cA \in BG'_k|_{3(k-1)}} \frac{1}{g_{\cA}}\Bigg)t_n(3(k-1)), \quad n \to \infty,
\end{align*}
where $t_n(p)$ is the tree polynomials defined by \eqref{eq:deftnp}.
On the other hand, 
by Lemma \ref{lemma:decomJB}, Proposition \ref{prop:coef1/(1+Y)^M}, \eqref{eq:coefJ_BandJ_A} and the fact that $\psi|_{3(k-1)}$ is bijective, we have
\begin{align*}
N_{\rm{bi}}(n,k)&=
\langle x^n \rangle F_{k}(x,x)\\
&\sim \langle x^n \rangle\sum_{\cB^* \in MG_{k}}\sum_{\substack{\cB \in \phi^{-1}(\cB^*) \\ M(\cB)=3(k-1)}}J_{\cB}(x,x)\\
&\sim\sum_{\cB^* \in MG_{k}}\sum_{\substack{\cB \in \phi^{-1}(\cB^*) \\ M(\cB)=3(k-1)}} 
\langle x^n \rangle \left(\frac{\fa_{3(k-1)}(\cB)}{(1-Y)^{3(k-1)}}\right)\\
&=\Bigg(\sum_{\cB^* \in MG_{k}}\sum_{\substack{\cB \in \phi^{-1}(\cB^*) \\ M(\cB)=3(k-1)}} \fa_{3(k-1)}(\cB) \Bigg)t_n(3(k-1))\\
&=\frac{1}{2^{k-1}}\Bigg(\sum_{\cA \in BG'_k|_{3(k-1)}} \frac{1}{g_{\cA}}\Bigg)t_n(3(k-1))\\
&\sim \frac{1}{2^{k-1}} N(n,k), \quad n \to \infty,
\end{align*}
hence the asymptotic equality \eqref{eq:f(n,n+k)asmthm} holds.

\section*{Acknowledgment}
This work was supported by JSPS KAKENHI Grant Numbers JP18H01124, JP20K20884 and JP23H01077, 
JSPS Grant-in-Aid for Transformative Research Areas (A) JP22H05105, 
and JST CREST Mathematics (15656429). 
TS was also supported in part by JSPS KAKENHI Grant Numbers, JP20H00119 and JP21H04432.

\end{document}